\newtheorem{theorem}{Theorem}[section]
\newtheorem{lemma}[theorem]{Lemma}
\newtheorem{claim}[theorem]{Claim}
\newtheorem*{unthm}{Theorem}
\newtheorem*{uncon}{Conjecture}
\theoremstyle{definition}
\newtheorem{remark}[theorem]{Remark}
\newtheorem{obs}[theorem]{Observation}
\theoremstyle:=definition,remark,plain\do{%
        \expandafter\g@addto@macro\csname th@\theoremstyle\endcsname{%
            \addtolength\thm@preskip\parskip
            }%
        }
\newcommand{\df}[1]{{{\color{black!50!blue}\em #1}}}
\newcommand{\mb}[1]{\mathbb{#1}}
\numberwithin{equation}{section}
\DeclareMathOperator{\conv}{conv}
\renewcommand{\epsilon}{\varepsilon}
\renewcommand{\phi}{\varphi}
\newcommand{\n}{\newline}
\newcommand{\fig}{{\sc Figure }}
\newcommand{\cal}{\mathcal}
\newcommand{\upstart}{ .. controls +(0.5,0) and +(-0.2,-0.2) .. ++(1,0.5) }
\newcommand{\upend}{ .. controls +(0.2,0.2) and +(-0.5,0) .. ++(1,0.5)}
\newcommand{\downstart}{ .. controls +(0.5,0) and +(-0.2,0.2) .. ++(1,-0.5)}
\newcommand{\downend}{ .. controls +(0.2,-0.2) and +(-0.5,0) .. ++(1,-0.5)}
\newcommand{\downup}{ .. controls +(0.3,-0.3) and +(-0.3,-0.3) .. ++(1,0)}
\newcommand{\updown}{ .. controls +(0.3,0.3) and +(-0.3,0.3) .. ++(1,0)}
\newcommand{\ds}{\downstart}
\newcommand{\de}{\downend}
\newcommand{\us}{\upstart}
\newcommand{\ue}{\upend}
\newcommand{\ud}{\updown}
\newcommand{\du}{\downup}
\newcommand{\ft}{\footnotesize}
\newcommand{\lu}{--++(1,1)}
\newcommand{\ls}{--++(1,0)}
\newcommand{\ld}{--++(1,-1)}
\tikzset{%
  highlight/.style={rectangle,rounded corners,fill=red!15,draw,fill
    opacity=0.5,thick,inner sep=0pt} }
\begin{document}

\title{The  Erd\H{o}s-Szekeres problem for non-crossing convex sets}

\author{Michael G. Dobbins}
\address{M. Dobbins \\ GAIA \\ Postech \\ Pohang \\ South Korea}
\email{dobbins@postech.ac.kr}

\author{Andreas F. Holmsen}
\address{A. Holmsen \\ Department of Mathematical Sciences \\ KAIST \\
  Daejeon \\ South Korea} 
\email{andreash@kaist.edu}

\author{Alfredo Hubard}
\address{A. Hubard \\ D\'{e}partement d'informatique \\ \'{E}cole Normale
  Sup\'{e}rior \\Paris \\ France}
\email{hubard@di.ens.fr}

\maketitle

\begin{abstract}
We show an equivalence between a conjecture of Bisztriczky and Fejes
T{\'o}th about arrangements of planar convex bodies and a conjecture of
Goodman and Pollack about point sets in topological affine planes. As
a corollary of this equivalence we improve the upper bound of Pach and
T\'{o}th on the Erd\H{o}s-Szekeres theorem for disjoint convex bodies,
as well as the recent upper bound obtained by Fox, Pach, Sudakov and
Suk, on the Erd\H{o}s-Szekeres theorem for non-crossing convex
bodies. Our methods also imply improvements on the positive fraction
Erd\H{os}-Szekeres theorem for disjoint (and non-crossing) convex
bodies, as well as a generalization of the partitioned
Erd\H{o}s-Szekeres theorem of P\'{o}r and Valtr to arrangements of
non-crossing convex bodies. 
\end{abstract}

\section{Introduction}

\subsection{The happy ending theorem} In 1935, Erd\H{o}s and Szekeres
proved the following foundational result in combinatorial geometry and
Ramsey theory.\footnote{Paul Erd{\H o}s colloquially referred to this
  as the ``happy ending theorem'' as it led to the meeting of George
  Szekeres and Esther Klein, who went on to get married and live
  happily ever after $\dots$} 

\begin{unthm}[Erd\H{o}s-Szekeres \cite{erd-sze1}] \label{original} For
  every integer $n\geq 3$ there exists a minimal positive integer
  $f(n)$ such that any set of $f(n)$ points in the Euclidean plane, in
  which every triple is convexly independent, contains $n$ points
  which are convexly independent. 
\end{unthm}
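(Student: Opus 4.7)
The plan is to use the classical cup--cap argument of Erd\H{o}s and Szekeres. After a generic rotation we may assume no two of the given points share an $x$-coordinate. Call a sequence $p_1,\dots,p_k$ of points (ordered by increasing $x$-coordinate) a \emph{$k$-cup} if the slope of $p_i p_{i+1}$ is strictly increasing in $i$, and a \emph{$k$-cap} if those slopes are strictly decreasing. Every $k$-cup and every $k$-cap consists of $k$ convexly independent points, so it suffices to show that for all integers $k,\ell\geq 3$ there is a smallest integer $g(k,\ell)$ with the property that every set of at least $g(k,\ell)$ points in general position contains a $k$-cup or an $\ell$-cap. Then $f(n)\leq g(n,n)$.

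I would prove the bound $g(k,\ell)\leq \binom{k+\ell-4}{k-2}+1$ by induction on $k+\ell$. The base cases $g(3,\ell)=\ell$ and $g(k,3)=k$ are immediate: any three points in general position form a cup or a cap, and sorting by slope from an extremal point handles the boundary values. For the inductive step, set $N:=g(k-1,\ell)+g(k,\ell-1)-1$, let $P$ be a set of $N$ points, and let $A\subseteq P$ be the subset of points that occur as the rightmost vertex of some $(k-1)$-cup in $P$.

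If $|A|\geq g(k,\ell-1)$, apply the inductive hypothesis to $A$: one obtains either a $k$-cup (done) or an $(\ell-1)$-cap $p_1 p_2\cdots p_{\ell-1}$ inside $A$. In the latter case, since $p_1\in A$ there is a $(k-1)$-cup $q_1\cdots q_{k-2}p_1$ ending at $p_1$, and comparing the slopes of $q_{k-2}p_1$ and $p_1 p_2$ yields either a $k$-cup (extend the cup by $p_2$, when the first slope is smaller) or an $\ell$-cap (prepend $q_{k-2}$ to the cap, when the first slope is larger). If instead $|A|<g(k,\ell-1)$, then $|P\setminus A|\geq g(k-1,\ell)$, so by induction $P\setminus A$ contains a $(k-1)$-cup or an $\ell$-cap; the former is impossible since its rightmost vertex would lie in $A$, so we obtain an $\ell$-cap. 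The recursion $g(k,\ell)\leq g(k-1,\ell)+g(k,\ell-1)-1$ together with the base cases gives the binomial bound by Pascal's rule, and in particular $f(n)\leq \binom{2n-4}{n-2}+1<\infty$.

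The only place where the geometric content of cups and caps is used in an essential way is the slope-comparison step that glues a $(k-1)$-cup to an $(\ell-1)$-cap sharing an endpoint; everything else is combinatorial bookkeeping on the recursion. I expect the main conceptual hurdle to be not technical but rather choosing the right intermediate notion, since it is the linear order on $x$-coordinates, together with the dichotomy between increasing and decreasing slopes, that makes the induction close so cleanly.
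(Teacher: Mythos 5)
Your proof is the classical cup--cap induction of Erd\H{o}s and Szekeres and it is correct: taking $A$ to be the rightmost endpoints of $(k-1)$-cups, comparing the two slopes at the shared vertex to glue a $(k-1)$-cup to an $(\ell-1)$-cap, and closing the recursion $g(k,\ell)\le g(k-1,\ell)+g(k,\ell-1)-1$ via Pascal's rule gives $f(n)\le\binom{2n-4}{n-2}+1$, and your base cases are right. Note that the paper does not reprove this statement --- it is cited to \cite{erd-sze1} --- but it does carry out essentially this same cup--cap argument in Section~\ref{uppers}, phrased in the projective dual (cups and caps as wiring diagrams; see Observations~\ref{cupsncaps1} and~\ref{cupsncaps2}), which lets it cover the strictly larger class of generalized configurations, and it then layers on the T\'oth--Valtr refinement (Lemma~\ref{toth-improv}) to shave one more row of Pascal's triangle and obtain $\binom{2n-5}{n-2}+1$. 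So your argument is the primal form of the paper's combinatorial core, just without the dualization that buys generality and without the extra one-point improvement.
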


Here {\em convexly independent} means that no point is contained in
the convex hull of the others. Determining the precise growth of the
function $f(n)$ is one of the longest-standing open problems of
combinatorial geometry, and has generated a considerable amount of
research. For history and details, see \cite{Barakaro, morris} and the
references therein. Two proofs are given in \cite{erd-sze1}, one of
which shows that $f(n)\leq \binom{2n-4}{2n-2}+1$, and in
\cite{erd-sze2} Erd\H{o}s and Szekeres give a construction showing
that $f(n) \geq 2^{n-2} + 1$.    

\begin{uncon}[Erd\H{o}s-Szekeres] \label{ES-conj}
  $f(n) = 2^{n-2}+1$.
\end{uncon}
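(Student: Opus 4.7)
The plan is to approach the conjecture through the ``cup--cap'' framework of Erd\H{o}s and Szekeres's second proof, which is the only route that has ever given exponential-type bounds on $f(n)$. Write $f(k,\ell)$ for the least $N$ such that every $N$-point set in general position contains either a $k$-cup (a sequence with strictly increasing consecutive slopes) or an $\ell$-cap (strictly decreasing). Since every cup and cap is convexly independent, one has $f(n)\le f(n,n)$, and the standard recurrence $f(k,\ell)\le f(k-1,\ell)+f(k,\ell-1)-1$ combined with the labeling $p\mapsto (u(p),d(p))$ — longest cup and longest cap ending at $p$ — yields $f(k,\ell)=\binom{k+\ell-4}{k-2}+1$. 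At the diagonal this gives $f(n)\le \binom{2n-4}{n-2}+1\sim 4^{n}/\sqrt{n}$, exponentially larger than the conjectured $2^{n-2}+1$.

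The first step would be to refine the cup--cap labeling into a weighted count rather than a forbidden-substructure count. Concretely, I would try to show that if a point set avoids $n$ points in convex position, then the number of points $p$ with $(u(p),d(p))=(i,j)$ is bounded by something like $\binom{n-3}{i-2}\binom{n-3}{j-2}/\binom{n-3}{i+j-4-\ast}$, designed so that the full sum telescopes to $2^{n-2}$. The hope is that the joint convex-position condition — strictly stronger than forbidding long cups and long caps individually — forces destructive interference between cups and caps sharing endpoints, and that this interference exactly collapses the binomial sum to a power of two.

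The main obstacle is that the cup--cap recurrence is \emph{tight}: Erd\H{o}s and Szekeres exhibited configurations of $\binom{2n-4}{n-2}$ points with no $n$-cup and no $n$-cap, so no argument using only $(u,d)$-labels can break the $4^n/\sqrt n$ barrier. A genuine improvement seems to require a structural dichotomy using planar geometry, of the kind pioneered by Suk in his bound $f(n)\le 2^{n+o(n)}$: either a point set contains many ``heavy'' cups that can be concatenated into a convex polygon, or else most cups are short and one recurses on a smaller subproblem. Removing the $o(n)$ slack from Suk's argument, and sharpening it to an exact $-2$, would require an integrality phenomenon in the inductive step — essentially a uniqueness/stability theorem saying that any near-extremal configuration is, up to a projective transformation, the Erd\H{o}s--Szekeres product of two smaller near-extremal configurations.

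The step I expect to be the real obstacle is precisely this structural stability: showing that extremal sets are rigid products. It is the content of the conjecture at the classification level, and has resisted proof for nearly ninety years. A realistic intermediate target, and what I would actually pursue, is to combine Suk's dichotomy with the order-type and topological-affine-plane machinery developed in this paper (rather than with point-set geometry directly), with the hope that the added combinatorial flexibility of allowable sequences makes the halving phenomenon easier to detect at each inductive step.
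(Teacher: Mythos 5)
This statement is labeled as a \emph{Conjecture} in the paper, not a theorem: the paper offers no proof of $f(n)=2^{n-2}+1$, and indeed it remains open (the paper only records that it has been verified for $n\le 6$ and that the best known upper bound is $\binom{2n-5}{n-2}+1 \sim 4^n/\sqrt{n}$). There is therefore no proof in the paper to compare your attempt against, and your text is — as you yourself say — not a proof but a research program. You do not claim otherwise, which is the honest position.

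Your sketch does correctly identify the state of play: the cup--cap recurrence of Erd\H{o}s and Szekeres yields only $f(n)\le\binom{2n-4}{n-2}+1$, and it is tight for the cup--cap statement itself (there exist $\binom{2n-4}{n-2}$-point sets with no $n$-cup and no $n$-cap), so no refinement that sees only the pair $(u(p),d(p))$ can close the gap to $2^{n-2}+1$. Two remarks. First, the ``weighted count'' you propose in the middle paragraph is not developed far enough to assess; as written it is a wish (``designed so that the full sum telescopes'') rather than an argument, and the tightness of the cup--cap extremal examples is strong evidence that any such collapse must use information beyond $(u,d)$. Second, you invoke Suk's $2^{n+o(n)}$ bound as the template to sharpen; note that this postdates the present paper, which works with the T\'{o}th--Valtr bound $g(n)\le\binom{2n-5}{n-2}+1$ (reproved in Section~\ref{uppers} in the setting of wiring diagrams). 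Your final suggestion — that the allowable-sequence / topological-affine-plane framework of this paper might make the ``halving'' in the inductive step easier to detect — is in the spirit of the Goodman--Pollack conjecture $f(n)=g(n)$ stated here, but the paper does not attempt it, and neither establishes the rigidity/stability phenomenon you correctly flag as the crux. In short: there is no proof here, and you have not supplied one; you have supplied a reasonable but entirely conjectural roadmap.
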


This conjecture has been verified for $n\leq 6$ \cite{erd-sze1,
  szekeres17}, while for $n>6$ the best known upper bound is $f(n)
\leq \binom{2n-5}{n-2}+1 \sim 4^n/\sqrt{n}$, which is due to T{\'o}th
and Valtr \cite{totval}. Asymptotically this is the same as the bound
given by Erd\H{o}s and Szekeres in their seminal paper.  

\subsection{Generalized configurations} It was observed by Goodman and
Pollack \cite{goopolES} that the Erd\H{o}s-Szekeres theorem extends to
so-called {\em generalized configurations}, i.e. point sets in a
topological affine plane \cite{GPsemi, topaff, grunbaumS}. One may consider
this as a finite configuration of points in the plane where each pair
of points are contained in a unique {\em pseudoline} in such a way
that the resulting set of pseudolines form a {\em pseudoline
  arrangement} \cite{goody}. This underlying pseudoline arrangement
induces a convexity structure on the point configuration in a natural
way, as each pair of points span a unique {\em pseudosegment} which is
contained in their associated pseudoline. We define the convex hull of
a pair of points to be their connecting pseudosegement, from which it
follows that a triple of points is convexly independent if each pair
of points determine distinct pseudolines. The complement of the
pseudosegements determined by the configuration is a collection of
open regions, one of which is unbounded, and the {\em convex hull} of
the configuration is the complement of the unbounded region. See \fig
\ref{gencon}. It turns out that many basic theorems of convexity hold
in this more general setting, for instance, a set of points is
convexly independent if and only if every four of its points are
convexly independent \cite{smoro} (which is commonly called
Carath\'{e}odory's theorem). 

Generalized configurations also have a purely combinatorial
characterization and there are several equivalent axiom systems which
define them. Other names for generalized configurations which can be
found in the literature are {\em uniform rank 3 acyclic oriented
  matroids} \cite{OMS} or {\em CC-systems} \cite{knuti}.  

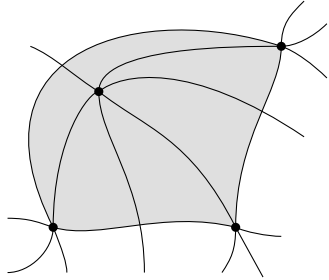
\begin{figure}[h!]
\centering
\begin{tikzpicture}
\begin{scope}[scale = .6]

\fill[gray!50, opacity=.5]
(0,0) .. controls (-2,4) and (2,5) .. (5,4) .. controls (5,3) and (4,2) .. (4,0).. controls (2.4,.4) and (1,-.3) .. (0,0) ;
\fill[black] 
(0,0) circle [radius = .1cm] (1,3) circle [radius = .1cm] (5,4) circle [radius = .1cm] (4,0) circle [radius = .1cm];
\draw (.3,-1) .. controls (.3,-.8) and (.2,-.5) .. (0,0) .. controls (-2,4) and (2,5) .. (5,4) .. controls (5.3,3.9) and (5.7,3.6) .. (6,3.3);    
\draw (-1,-1) .. controls (-.5,-1) and (0,-.5) .. (0,0) .. controls (0,2) and (.8,3) .. (1,3) .. controls (1.5,3.4) and (3,3.7) .. (5.5,2);
\draw (-1,.2) .. controls (-.5,.2) and (-.3,.1) .. (0,0) .. controls (1,-.3) and (2.4,.4) .. (4,0) .. controls (4.2, -.1) and (4.7,-.2) .. (5,-.2);
\draw (5.5,5) .. controls (5.2,4.7) and (5,4.5) .. (5,4) .. controls (5,3) and (4,2) .. (4,0) .. controls (4,-.5) and (3.9, -.7) .. (3.7,-1);
\draw (4.6, -1.1) -- (4,0) .. controls (3,2) and (2,2.2) .. (1,3) .. controls (.5, 3.3) and (0,3.8) .. (-.5, 4);
\draw (6,4.5) .. controls (5.7, 4.2) and (5.3, 4) .. (5,4) .. controls
(4,4) and (1,4) .. (1,3) .. controls (1,2) and (2,1) .. (2,-1); 

\end{scope}
\end{tikzpicture}
\caption{\ft Four point in a topological plane. Each pair of points
  determines a unique pseudoline. Their convex hull is the shaded
  region and shows that the points are not convexly independent.} 
\label{gencon}
\end{figure}

\begin{unthm}[Goodman-Pollack \cite{goopolES}] 
  For every integer $n\geq 3$ there exists a minimal positive integer
  $g(n)$ such that any generalized configuration of size $g(n)$,
  in which every triple is convexly independent,  contains $n$ points
  which are convexly independent. 
\end{unthm}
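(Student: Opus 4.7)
The plan is to adapt the Ramsey-theoretic proof of the original Erd\H{o}s--Szekeres theorem to generalized configurations: I would $2$-color triples by their orientation, extract a monochromatic subset using Ramsey's theorem on triples, and then argue that such a subset is automatically convexly independent.

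Fix an arbitrary labeling $p_1,\dots,p_N$ of the points in the configuration. Since every pair of points lies on a uniquely determined pseudoline, for each triple $i<j<k$ there is a well-defined \emph{orientation} $\chi(i,j,k)\in\{+,-\}$, given by the side of the pseudoline through $p_i,p_j$ on which $p_k$ lies; this is the chirotope of the underlying rank-$3$ acyclic oriented matroid. Treating $\chi$ as a $2$-coloring of $\binom{[N]}{3}$ and invoking Ramsey's theorem for $3$-uniform hypergraphs, there is a threshold $R_3(n)$ so that whenever $N\ge R_3(n)$ some $n$-element subset $S$ has all of its ordered triples of the same sign. The proof then reduces to showing that any such monochromatic $S$ is convexly independent, which yields $g(n)\le R_3(n)$.

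By the Carath\'{e}odory-type theorem for generalized configurations cited above \cite{smoro}, it suffices to verify this reduction for $|S|=4$. So the heart of the argument is a $4$-point lemma: four points whose four ordered triples have the same orientation must be in convex position. Over the Euclidean plane this is a short determinant computation, because if one of the four points lies inside the triangle spanned by the other three then exactly one of the four orientations flips. The main obstacle is to carry this computation over to the topological affine setting; I expect it to follow from the pseudoline axioms (equivalently, the chirotope axioms for a rank-$3$ oriented matroid), using that the three pseudosegments joining any three of the four points bound a topological triangle into which the fourth point either falls or does not. Interior position forces a $(3,1)$ sign pattern on the four orientations while exterior position forces a $(2,2)$ pattern, so monochromaticity rules out an interior point and every $4$-subset of $S$ must be convexly independent. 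Combined with the Carath\'{e}odory step this establishes the claim and gives a finite (if enormous) upper bound $g(n)\le R_3(n)$, which is all the theorem asserts.
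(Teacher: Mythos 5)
Your argument is essentially correct, but it takes the Ramsey-theoretic route (the \emph{first} Erd\H{o}s--Szekeres proof) rather than the cups-and-caps route the paper follows in Section~\ref{uppers}. The paper never re-proves the Goodman--Pollack theorem from scratch; it cites it and then, in Section~\ref{uppers}, derives the sharp bound $g(n)\le\binom{2n-5}{n-2}+1$ by dualizing T\'oth--Valtr to wiring diagrams, using $k$-cups and $l$-caps and the double-counting recursion in Observations~\ref{cupsncaps1}--\ref{cupsncaps2}. That argument is entirely combinatorial on the allowable sequence and yields a singly-exponential bound. Your proof instead colors triples by the chirotope sign of the underlying rank-$3$ acyclic oriented matroid, applies the $3$-uniform Ramsey theorem, and then uses a four-point lemma plus the Carath\'eodory theorem of \cite{smoro} to deduce that a monochromatic set is convexly independent. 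That is sound: in a uniform rank-$3$ acyclic oriented matroid, every $4$-element restriction is realizable, and if one point lies in the convex hull of the other three the four orientations split $(3,1)$ (up to global sign), so a monochromatic $(4,0)$ pattern forces convex position, and Carath\'eodory then upgrades ``every $4$-subset'' to ``the whole set.'' The trade-off is quantitative: the Ramsey approach only gives a tower-type bound $g(n)\le R_3(n)$, whereas the wiring-diagram argument gives $4^{n+o(n)}$.

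One small correction to the key lemma as you stated it: ``exterior position forces a $(2,2)$ pattern'' is not right. Four points in convex position can produce a $(4,0)$, $(0,4)$, or $(2,2)$ pattern depending on the labeling (indeed the monochromatic case \emph{is} a convex position). The correct and sufficient statement is that an interior point forces a $(3,1)$ or $(1,3)$ pattern, which is the only direction your proof actually invokes. You should also make explicit that the $4$-point lemma in the topological affine setting follows because any $4$-element subconfiguration of a generalized configuration is realizable by straight lines (there are only $7$ labeled types, all realizable, and the two alternating sign patterns are the ones excluded by the chirotope/GP axioms), so the Euclidean determinant computation carries over verbatim.
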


It should be noted that this is a proper generalization of the
Erd\H{os}-Szekeres theorem as there are substantially more
combinatorially distinct point sets in topological affine planes than
there are in the Euclidean plane \cite{fels-valt, gp-upper}. By
containment it follows that $f(n)\leq g(n)$.  

\begin{uncon}[Goodman-Pollack] $f(n) = g(n)$. \end{uncon}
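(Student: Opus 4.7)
The plan is to establish the nontrivial direction $g(n) \leq f(n)$, since $f(n) \leq g(n)$ is immediate from the inclusion of Euclidean point sets into generalized configurations. Informally, the goal is to show that the extremal Erd\H{o}s-Szekeres-bad examples in the topological setting either come from, or can be replaced by, Euclidean ones of the same size.

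My first attempt would be a stretchability/realizability approach. Let $P$ be a generalized configuration of size $g(n) - 1$ in which every triple is convexly independent but no $n$-subset is. I would try to produce a \emph{convex-equivalent} straight-line realization $P'$: a Euclidean point set of the same cardinality whose family of convexly independent subsets coincides with that of $P$. Any such $P'$ witnesses $f(n) - 1 \geq g(n) - 1$ and proves the conjecture.

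To build $P'$, I would proceed in two steps. First, read off from $P$ its combinatorial cup--cap data (the classical Erd\H{o}s-Szekeres cups-and-caps argument adapts verbatim to generalized configurations by Goodman-Pollack, and this data is what controls convex-independent subsets). Second, attempt to straighten the underlying pseudoline arrangement through a sequence of local moves (swaps of adjacent crossings), each preserving the convex-position relation even if it alters the full oriented matroid. One would argue inductively, removing a convex-hull vertex of $P$, applying a Euclidean realization to the residual configuration, and then reinserting the vertex in a compatible cell of the straightened arrangement.

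The main obstacle is that a naive stretchability approach is doomed: by Mn\"ev-type obstructions there exist pseudoline arrangements with no straight-line realization (the non-Pappus arrangement is the smallest example), so one cannot in general produce $P'$ with the same oriented matroid as $P$. The crucial, and genuinely hard, question is whether stretchability can be relaxed so as to preserve only the coarser convex-independence structure on \emph{extremal} configurations, or whether extremal configurations are always stretchable for geometric reasons peculiar to the Erd\H{o}s-Szekeres problem. If this relaxation fails, an entirely different strategy is needed, and the abstract of the paper suggests exactly such a route: translate the conjecture into the language of arrangements of convex bodies (the Bisztriczky-Fejes T\'oth conjecture), where the extra degrees of freedom afforded by bodies over points appear to make a combinatorial reduction feasible even when no realization of $P$ itself exists.
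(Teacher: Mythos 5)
You were asked to prove a statement that the paper itself labels a conjecture and does not prove: the equality $f(n) = g(n)$ remains open. The paper's contribution is to prove the adjacent equality $g(n) = h_1(n)$ (Theorem~\ref{equivES}), which identifies the generalized-configuration Erd\H{o}s--Szekeres number with the one for non-crossing convex bodies, and to transfer the T\'oth--Valtr upper bound to $g(n)$; it makes no claim to collapse $g(n)$ down to $f(n)$. So there is no ``paper's own proof'' here to compare your attempt against.

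On its own terms, your proposal does not constitute a proof either, and you say as much. The stretchability route you describe in the middle paragraph, straightening the pseudoline arrangement by local moves while inserting and removing hull vertices, is not carried to a conclusion, and your final paragraph correctly observes why: non-stretchable arrangements exist (non-Pappus being the smallest), so one cannot in general produce a Euclidean $P'$ with the same oriented matroid as $P$. You then float the weaker requirement of preserving only the convex-independence structure, but you offer no argument that this relaxation succeeds on extremal configurations, and no such argument is known. Your closing inference that the paper works through convex bodies is accurate as a reading of the abstract, but that machinery proves $g(n) = h_1(n)$, not $g(n) \leq f(n)$; reducing a generalized configuration to a non-crossing body arrangement moves sideways within the class of objects counted by $g(n)$, it does not land you in the Euclidean point class counted by $f(n)$. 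In short: the gap is total, the statement is an open conjecture, and you should not present a plan for an unsolved problem as a proof attempt without flagging that the final step is missing and is, in fact, the entire difficulty.
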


It is not difficult to extend the proof of T\'{o}th and Valtr to
generalized configurations, as their proof uses no metric
properties. This appears to be a well-known fact, but since we could
not find any proof of this in the literature we include one in
section \ref{uppers}.
We therefore have $g(n)\leq \binom{2n-5}{n-2}+1$ for
$n\geq 7$. Also, the computer aided proof of Szekeres and Peters
\cite{szekeres17} confirming that $f(6)=17$, actually encodes
generalized configurations and it follows that $g(n) = f(n) =
2^{n-2}+1$ for all $n\leq 6$.     

\subsection{Mutually disjoint convex bodies}
In a different direction, initiated by Bisztriczky and Fejes T\'{o}th,
the Erd\H{o}s-Szekeres theorem was generalized to arrangements of
compact convex sets in the plane (which we call {\em bodies} for
brevity). An arrangement of bodies is {\em convexly independent} if no
member is contained in the convex hull of the others. 

\begin{unthm}[Bisztriczky-Fejes T\'{o}th \cite{biszFT1}] For any
  integer $n\geq 3$ there exists a minimal positive integer $h_0(n)$
  such that any arrangement of $h_0(n)$ pairwise disjoint bodies in
  the Euclidean plane, in which every triple is convexly independent,
  contains an $n$-tuple which is convexly independent.   
\end{unthm}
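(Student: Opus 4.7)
The plan is to generalize the classical cup-cap proof of Erdős and Szekeres from points to pairwise disjoint convex bodies, using common external tangent lines in place of point-to-point segments. After placing the bodies in general position and ordering them $C_1,\ldots,C_N$ by the $x$-coordinate of their leftmost point, I would associate to each pair $C_i, C_j$ (with $i<j$) its two non-separating common tangent lines: an \emph{upper tangent} $T^+_{ij}$ (both bodies below) and a \emph{lower tangent} $T^-_{ij}$ (both bodies above). For any convexly independent triple $C_i, C_j, C_k$ with $i<j<k$, pairwise disjointness forces the middle body $C_j$ either strictly above $T^+_{ik}$ or strictly below $T^-_{ik}$: any position inside the strip bounded by $T^+_{ik}$ and $T^-_{ik}$ would put $C_j$ inside $\conv(C_i\cup C_k)$, contradicting convex independence of the triple. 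Call the triple a \emph{cap} in the first case and a \emph{cup} in the second.

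Next, I would carry out the standard cup-cap pigeonhole argument on this dichotomy. Label each body $C_i$ with the pair $(a_i, b_i)$, where $a_i$ is the length of the longest cap ending at $C_i$ and $b_i$ the length of the longest cup ending at $C_i$. For $i<j$, if the triple $(C_i, \cdot, C_j)$-type is a cap then $a_j > a_i$, and symmetrically for cups; so distinct bodies receive distinct labels. Thus for $N \geq \binom{2n-4}{n-2}+1$ one finds a subfamily of $n$ bodies in which every triple is a cap (or every triple is a cup). It remains to show that such a \emph{monotonic} cap (symmetrically, cup) is globally convexly independent; this yields $h_0(n)\leq \binom{2n-4}{n-2}+1$, which in particular gives the existence claim.

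The main obstacle is the last step: bridging from the local, triple-level cup/cap condition to a global convex-position statement. The key geometric lemma is that if $C_i, C_j, C_k$ is a cap, then the slope of $T^+_{ij}$ is strictly larger than the slope of $T^+_{jk}$ (and the slope of $T^+_{ik}$ is strictly between them), which can be read off by rotating a line through the tangency points. Applied to every consecutive triple in a monotonic cap $C_{i_1},\ldots,C_{i_n}$, this forces the slopes of the upper tangents $T^+_{i_j i_{j+1}}$ to be strictly decreasing in $j$. Consequently the upper envelope of $\conv(C_{i_1}\cup\cdots\cup C_{i_n})$ traverses the bodies in the order $C_{i_1}, C_{i_2}, \ldots, C_{i_n}$, so each body has a point on the boundary of the convex hull and therefore is not contained in the convex hull of the others; the symmetric argument using $T^-$ handles the cup case. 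In the classical point case this final step is immediate from slope monotonicity, whereas for bodies it requires precisely this slope-rotation lemma together with a careful verification that the tangent-line pieces fit together into a convex curve visiting each $C_{i_j}$ in turn.
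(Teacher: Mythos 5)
The core step of your argument---the claim that for a convexly independent, pairwise disjoint triple $C_i,C_j,C_k$ with $i<j<k$, the middle body must lie \emph{entirely} above $T^+_{ik}$ or \emph{entirely} below $T^-_{ik}$---is false, and this is precisely the obstruction that makes the body case genuinely harder than the point case. The negation of ``$C_j$ lies in the strip'' is not ``$C_j$ lies above $T^+_{ik}$ or below $T^-_{ik}$''; the body $C_j$ can \emph{cross} one of the tangent lines. In that case the triple is still convexly independent (the part of $C_j$ sticking out of the strip is outside $\conv(C_i\cup C_k)$), yet $C_j$ is neither a cup nor a cap in your sense. Concretely: let $C_i$ and $C_k$ be unit disks centered at $(0,0)$ and $(100,0)$, so $T^\pm_{ik}$ are the lines $y=\pm1$, and let $C_j$ be a unit disk centered at $(50,\tfrac12)$. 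The three bodies are pairwise disjoint and convexly independent, but $C_j$ crosses $T^+_{ik}$. Moreover, even when $C_j$ does lie in the strip, you do not get $C_j\subset\conv(C_i\cup C_k)$ for free: the hull is bounded while the strip is not, so the implication only holds for the part of $C_j$ lying between the two tangency chords, and your ordering by leftmost points does not control that. With the dichotomy broken, both the Dilworth labeling and the slope-monotonicity extension of a monotone cap collapse; they each implicitly use that any convexly independent triple falls into exactly one of the two classes.

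This failure is not a technicality to be patched: it is the reason Bisztriczky and Fejes T\'{o}th's original bound was a tower of exponentials and why Pach--T\'{o}th's improvement was $\binom{2n-4}{n-2}^2\approx 16^n/n$ rather than $\binom{2n-4}{n-2}+1$; had the naive cup--cap transfer worked, the stronger bound would have been known from the start. The paper takes a completely different route to this statement: it dualizes each body to the graph of its support function on the cylinder $\mathbb{S}^1\times\mathbb{R}$, classifies triples as orientable or non-orientable by the combinatorics of the dual curves (Observation~\ref{locals}), shows via a sequence of triangle flips (Lemmas~\ref{weak}, \ref{pumping}) that every non-crossing arrangement admits a weak map to an orientable one, and then identifies orientable arrangements with generalized configurations (Lemmas~\ref{converserep}, \ref{realz}), where the cup--cap argument applies cleanly in the wiring-diagram picture (Section~\ref{uppers}). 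That chain gives $h_0(n)\le h_1(n)=g(n)\le\binom{2n-5}{n-2}+1$, subsuming the existence claim. If you want to salvage a direct primal argument, you would at minimum need a classification of convexly independent triples into more than two types and a Ramsey-type pigeonhole on that coarser classification, which is essentially what Pach--T\'{o}th do and why their bound is squared.
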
 

This reduces to the Erd\H{o}s-Szekeres theorem when the bodies are
points, but was somewhat more complicated to establish in general. The
added complexity is reflected in the original upper bound  $h_0(n)
\leq t_n(t_{n-1}( \dots t_1(cn) \dots)) $, where $t_n$ is the $n$-th
tower function. The upper bound was later reduced to $16^n/n$  by Pach
and T\'{o}th in \cite{PachTothBodies}. By containment we have
$f(n)\leq h_0(n)$.  

\begin{uncon}[Bisztriczky-Fejes T\'{o}th]
  $f(n) = h_0(n)$.
\end{uncon}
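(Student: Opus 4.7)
This conjecture is a longstanding open problem, so any ``proof'' is speculative; but the abstract signals the route. The plan is to reduce the conjecture to the Goodman--Pollack conjecture $f(n)=g(n)$ by building a generalized configuration out of any arrangement of pairwise disjoint bodies whose every triple is convexly independent, in such a way that convex independence transfers.

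Concretely, for each unordered pair $\{A_i, A_j\}$ I would select a canonical curve $\ell_{ij}$ that plays the role of the ``line through $A_i$ and $A_j$''---for instance, a common tangent extended by suitable arcs of $\partial A_i$ and $\partial A_j$, or the locus of midpoints of chords between the two bodies. The first task is to verify that the family $\{\ell_{ij}\}_{i<j}$ is a pseudoline arrangement in a topological affine plane, and hence defines a generalized configuration on $N$ pseudopoints (one per body). This is largely bookkeeping once the right construction is fixed: pairwise disjointness plus convex independence of triples ought to supply the general-position-type conditions needed for any two curves to cross exactly once and for the required axioms of a pseudoline arrangement to hold.

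The decisive step, and the main obstacle, is to prove that for every $S\subseteq[N]$ the sub-arrangement $\{A_i\}_{i\in S}$ is convexly independent as bodies if and only if the corresponding pseudopoints are convexly independent in the topological affine plane. One direction is usually the easier one; the other appears harder because convex independence of bodies can a priori depend on the full shapes of the bodies and not merely on their pairwise separation data. Here I would lean on the Carath\'eodory-type theorem for generalized configurations quoted earlier in the paper, reducing the biconditional to the case $|S|=4$, and then perform a finite case analysis of how four pairwise disjoint bodies can or cannot be convexly independent, and how each configuration is forced to appear in the induced pseudopoint picture. Correct choice of $\ell_{ij}$ is exactly what is needed to make this case analysis close.

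Once the equivalence of convex independence in the two settings is in hand, the conclusion is immediate: any arrangement of $g(n)$ pairwise disjoint bodies with every triple convexly independent produces a generalized configuration of the same size satisfying the hypothesis of Goodman--Pollack's theorem, hence $n$ convexly independent pseudopoints, hence $n$ convexly independent bodies. This yields $h_0(n)\leq g(n)$, and combined with the containments $f(n)\leq h_0(n)$ and $f(n)\leq g(n)$, the Goodman--Pollack conjecture $f(n)=g(n)$ forces $f(n)=h_0(n)$. The whole plan therefore rests on the design of the pseudolines $\ell_{ij}$ and on the equivalence of the two notions of convex independence; that is where I expect the entire technical load to lie.
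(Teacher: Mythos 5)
You correctly identify that this is an open conjecture; the paper does not prove $f(n) = h_0(n)$. What it does prove (Theorem~\ref{equivES}) is $g(n) = h_1(n)$, which via $h_0(n) \leq h_1(n)$ reduces the Bisztriczky--Fejes T\'oth conjecture to Goodman--Pollack's $f(n) = g(n)$. Your intended end point --- showing $h_0(n) \leq g(n)$ so that Goodman--Pollack would force $f(n) = h_0(n)$ --- matches the paper's reduction, but your proposed route differs from the paper's and has a gap you cannot close. The paper does not construct a pseudoline $\ell_{ij}$ through each pair in the primal plane; it passes to the dual system of support-function graphs on the cylinder $\mathbb{S}^1 \times \mathbb{R}^1$, where convex independence is read off the upper envelope, and it then builds the generalized configuration by a sequence of \emph{weak maps}. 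Crucially, a weak map only guarantees that convex independence is reflected from the target back to the source; it is not the biconditional you ask for, and the paper never proves (or needs) the forward direction.

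The biconditional is in fact unattainable, and the reason is \emph{non-orientable triples}. Even among pairwise disjoint bodies, one body of a convexly independent triple can contribute two separate arcs to the boundary of the triple's convex hull (the convex hull is then supported by four common tangents rather than three; take two small disks stacked vertically with a long thin horizontal slab between them). When this happens the cyclic orders of triples violate the chirotope axioms, so there is no generalized configuration on the same ground set realizing the same convex-independence structure, no matter how you choose $\ell_{ij}$. The paper eliminates such triples one at a time by ``triangle flips'' in the dual cell complex (Lemmas~\ref{weak} and~\ref{pumping}), paying only the one-directional weak-map cost. There is also a more basic modeling issue: a pair of disjoint bodies has four common tangents, and the convex structure of the arrangement depends on the order type of all of them (the natural dual object is a \emph{double} pseudoline arrangement, as in Habert--Pocchiola), so collapsing each pair to a single pseudoline discards exactly the data your proposed Carath\'eodory-based four-point case analysis would need. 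If you relax your requirement to a one-way weak map and introduce a flip mechanism to kill non-orientability, you are reconstructing the paper's argument --- and the step you call ``largely bookkeeping'' is precisely where the paper's technical content lives.
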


Before this work the only known exact values are $h_0(4) = 5$ and
$h_0(5) = 9$ which were established in \cite{biszFT2}. 

\subsection{Non-crossing convex bodies}
The disjointness hypothesis was relaxed by Pach and T\'{o}th, who
showed that an Erd\H{o}s-Szekeres theorem also holds for arrangements
of {\em non-crossing} bodies, which means that for any pair of bodies
$A$ and  $B$, the set $A \setminus B$ is simply
connected. Equivalently, it means that $A$ and $B$ have precisely two
{\em common supporting tangents}. 

\begin{unthm}[Pach-T\'{o}th \cite{PachToth1}] For any integer $n\geq
  3$ there exists a minimal positive integer $h_1(n)$ such that any
  arrangement of $h_1(n)$ non-crossing bodies in the Euclidean plane,
  in which every triple is convexly independent, contains an $n$-tuple
  which is convexly independent.    
\end{unthm}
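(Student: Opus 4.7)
The plan is to deduce the existence of $h_1(n)$ from the Goodman--Pollack generalized Erd\H{o}s--Szekeres theorem by associating to any arrangement of non-crossing bodies a generalized configuration of points which records the convex independence structure of the arrangement. Once this association is in place, an arrangement of $g(n)$ non-crossing bodies in which every triple is convexly independent yields a generalized configuration of size $g(n)$ with every triple convexly independent, hence an $n$-tuple of convexly independent points, and pulling this back to the bodies produces an $n$-tuple of convexly independent bodies. This immediately gives the bound $h_1(n)\le g(n)$.

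First I would pick an interior representative point $p_i\in\inte(C_i)$ in each body. For each pair $(C_i,C_j)$ the two common supporting tangents, guaranteed by the non-crossing hypothesis, bound a slab-shaped region that contains both bodies; I would route a simple curve $\gamma_{ij}$ from $p_i$ through the interior of $C_i$, out along one of these tangents to infinity, and in from the other tangent through $C_j$ to $p_j$, continuing beyond the endpoints along the same tangent directions. After a small generic perturbation placing the bodies in general position I would argue that the collection $\{\gamma_{ij}\}$ can be isotoped to a pseudoline arrangement: each $\gamma_{ij}$ is eventually straight, so any two of them meet only in a bounded region, and successive local uncrossings of adjacent transverse intersection pairs reduce their meeting set to a single crossing. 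For the convexity dictionary, a body $C_k$ lies in the convex hull of the others precisely when, for every pair $(i,j)$, the point $p_k$ lies on the same side of $\gamma_{ij}$ as the bodies that separate $C_k$ from infinity, and this side coincides with the side determined by the oriented pseudoline through $p_i$ and $p_j$.

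The main obstacle will be verifying that $\{\gamma_{ij}\}$ really does form a pseudoline arrangement and that the lift of convex independence is an exact correspondence, especially in the presence of nested bodies, shared tangent directions, or tangent lines that force extra intersections among the routed curves. Handling these degeneracies requires a genericity argument (perturbing each body slightly to destroy all coincidences among tangent lines and all non-transverse intersections among the $\gamma_{ij}$), together with a check of the oriented-matroid chirotope axioms on triples of bodies, where one must show that the cyclic order in which the common tangents of $(C_i,C_j)$, $(C_j,C_k)$, and $(C_i,C_k)$ appear around the triple depends only on the combinatorial type of the triple and is consistent with the side information on all other bodies. Once this combinatorial lifting is established, the theorem follows at once from the Goodman--Pollack theorem stated earlier.
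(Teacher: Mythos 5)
Your reduction $h_1(n)\le g(n)$ is the right target, and the paper does indeed prove exactly that inequality (it is half of Theorem~\ref{equivES}). However, the route you sketch has a fundamental gap that the paper spends most of Section~\ref{proof1} working around: \emph{non-orientable triples}. You aim to lift the arrangement to a generalized configuration that records convex independence ``exactly,'' and to verify the chirotope axioms by reading off ``the cyclic order in which the common tangents of $(C_i,C_j)$, $(C_j,C_k)$, $(C_i,C_k)$ appear around the triple.'' But for a non-crossing, convexly independent triple this cyclic order need not be well-defined: one body may contribute \emph{two} disjoint arcs to the boundary of the convex hull of the triple (equivalently, one dual support curve appears twice on the upper envelope), so the cyclic sequence looks like $(A,B,A,C)$ rather than $(A,B,C)$. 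Such triples exist and are genuinely non-crossing; the paper calls them non-orientable and exhibits them in Observation~\ref{locals} and Figure~\ref{Types}. In a generalized configuration every triple has a single well-defined orientation, so no exact, orientation-preserving correspondence to points can exist when a non-orientable triple is present. This kills your claimed ``exact correspondence'' outright. It is also not enough to just produce \emph{some} pseudoline arrangement through your points $p_i$: once a non-orientable triple gets assigned an arbitrary orientation, larger subsets can flip between convexly independent and not in a way you have not controlled, and you would need the one-sided property (preimages of convexly independent subsets are convexly independent) to survive.

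The paper's fix is precisely to weaken the target: instead of an exact correspondence it constructs a \emph{weak map} $\phi:\mathcal A\to\mathcal B$ to an orientable arrangement (Lemma~\ref{weak}), in which only the direction ``$\phi^{-1}(\mathcal B')$ convexly independent whenever $\mathcal B'$ is'' is required, and proves it by passing to the dual system of support curves on the cylinder and applying \emph{triangle flips} to eliminate non-orientable triples one at a time; the heart of the argument is Lemma~\ref{pumping}, which guarantees a flippable empty zone. This is the piece your proposal is missing. Separately, your pseudoline construction as described is also not verified: two routed curves $\gamma_{ij}$ and $\gamma_{kl}$ need not meet an odd number of times, so there is no guarantee they can be uncrossed to a single crossing, and your ``successive local uncrossings'' can change which side of $\gamma_{ij}$ a third representative point $p_m$ lies on, invalidating the very side information you want to preserve. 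Both issues would have to be resolved to turn the sketch into a proof; as it stands the key reduction step is absent.
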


The original upper bound on $h_1(n)$ was improved to a doubly
exponential function in \cite{hubsuk}. Recently Fox, Pach , Sudakov,
and Suk \cite{FPSS} obtained the upper bound $h_1(n) \leq  2^{O(n^2
  \log n)}$. See also \cite{bt3, geza} for related work. 

The known bounds are summarized as follows.

\begin{figure}[h!]\centering
\begin{tikzpicture}
\node at (-.4,2) {$2^{n-2}+1$};
\node at (0.7,2) {$\leq$};
\node at (1.4,2) {$f(n)$};
\node at (2.1,2) {$\leq$};
\node at (2.8,2) {$g(n)$};
\node at (3.5,2) {$\leq$};
\node at (4.7,2) {$\binom{2n-5}{n-1}+1$};
\node at (7,2) {(for $n\geq 7$)};

\node at (1.4,1.2) {$f(n)$};
\node at (2.1,1.2) {$\leq$};
\node at (2.8,1.2) {$h_0(n)$};
\node at (3.5,1.2) {$\leq$};
\node at (4.45,1.2) {$\binom{2n-4}{n-2}^2$};

\node at (2.8,0.4) {$h_0(n)$};
\node at (3.5,0.4) {$\leq$};
\node at (4.3,0.4) {$h_1(n)$};
\node at (5.1,0.4) {$\leq$};
\node[right] at (5.3,0.47) {$2^{O(n^2 \log n)}$};  

\node[right] at (-1.25,-.6) {$2^{n-2}+1 = f(n) = g(n)$};
\node[right] at (4,-.6) {(for $n\leq 6$)};

\node[right] at (-1.25,-1.3) {$2^{n-2}+1 = h_0(n)$};
\node[right] at (4,-1.3) {(for $n\leq 5$)};

\end{tikzpicture}
\end{figure}

\subsection{Our results}
In this paper we make considerable improvements on $h_0(n)$ and
$h_1(n)$ by establishing the following.   

\begin{theorem} \label{equivES} The Erd\H{o}s-Szekeres problems for
  generalized configurations and for arrangements of non-crossing
  bodies are equivalent. In other words, $g(n) = h_1(n)$.  
\end{theorem}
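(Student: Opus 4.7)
The plan is to establish both inequalities $g(n) \le h_1(n)$ and $h_1(n) \le g(n)$ by constructing translations between the two settings that preserve the combinatorial pattern of convex independence on subsets. Both directions are needed because $g(n)$ and $h_1(n)$ are Ramsey-type thresholds.

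For the direction $h_1(n) \le g(n)$ (the \emph{deflation}), given a non-crossing body arrangement $B_1, \ldots, B_N$ with every triple convexly independent, I would pick a representative point $p_i \in B_i$ and, for each pair $\{i,j\}$, define a pseudoline $\ell_{ij}$ through $p_i$ and $p_j$ that lies in the ``tangent corridor'' between the two common supporting tangents of $B_i$ and $B_j$---this corridor is well-defined precisely because the bodies are non-crossing. The natural framework here is that of \emph{double pseudoline arrangements}: each body together with its tangent structure is a double pseudoline, and contracting each double pseudoline to a single pseudoline through its representative point yields a pseudoline arrangement on $\{p_1,\ldots,p_N\}$, i.e.\ a generalized configuration. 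Applying the defining property of $g(n)$ to $N = g(n)$ then yields a convexly independent $n$-tuple of points, and pulling back through the contraction produces a convexly independent $n$-tuple of bodies.

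For the reverse direction $g(n) \le h_1(n)$ (the \emph{inflation}), starting from a generalized configuration of $N$ points I would first modify each pseudoline to be straight in a small neighborhood of each of its incident points, and then replace each $p_i$ by a sufficiently small round convex body $B_i$ centered at $p_i$. For bodies chosen small enough, the arrangement $\{B_1,\ldots,B_N\}$ is non-crossing, the locally straightened pseudoline $\ell_{ij}$ lies in the tangent corridor between $B_i$ and $B_j$, and a subset of bodies is convexly independent if and only if the corresponding subset of points is. Hence any $n$-tuple of convexly independent bodies (guaranteed by $h_1(n)$ as soon as $N \ge h_1(n)$) produces an $n$-tuple of convexly independent points.

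The main technical obstacle lies in the deflation: showing that the curves $\ell_{ij}$ really do form a pseudoline arrangement---any two cross at exactly one point, transversally---and that ``being in the Euclidean convex hull of other bodies'' corresponds faithfully to ``being in the convex hull of other points'' in the resulting generalized configuration. The cleanest way to control both conditions is probably a continuous shrinking argument: contract the bodies simultaneously toward their representative points through non-crossing body arrangements, tracking the common tangent structure at every stage; the limit provides the desired pseudoline arrangement, and invariance of the convex-independence pattern along this deformation transfers the correspondence back to the original arrangement. Essentially all the substance of Theorem~\ref{equivES} lies in this direction, as the inflation is a straightforward perturbative argument once one allows local modification of the pseudolines.
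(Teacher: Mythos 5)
Your high-level framing is right: one needs both inequalities, and the hard direction is passing from a non-crossing body arrangement to a generalized configuration. But the proposed deflation argument has a genuine gap, and it is exactly the gap that the paper's weak-map/triangle-flip machinery is built to close.

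The problem is your claim that the convex-independence pattern is ``invariant along this deformation.'' It is not. Shrinking all bodies simultaneously toward representative points does \emph{not} preserve, in either direction, which subsets are convexly independent. Concretely, take four non-crossing bodies all of whose triples are convexly independent, but with $B_4 \subset \conv(B_1 \cup B_2 \cup B_3)$; choose $p_4 \in B_4$ just outside the triangle $p_1p_2p_3$. Then $\{p_1,p_2,p_3,p_4\}$ is convexly independent while $\{B_1,B_2,B_3,B_4\}$ is not, so the contracted configuration fails to be a weak map for the original arrangement. (The obstruction is structural: contracting to points sends every triple to an \emph{orientable} one, while a non-crossing arrangement can contain \emph{non-orientable} triples; the two behave differently with respect to which curves appear on upper envelopes of larger subsystems, and a naive homothety provides no control over how these envelopes reorganize.) In the paper this is precisely where Lemma~\ref{weak} and Lemma~\ref{pumping} come in: one passes from the given arrangement to an orientable one by a sequence of \emph{triangle flips} on the dual cell complex on $\mb{S}^1\times\mb{R}^1$, each flip removing one non-orientable triple while provably preserving the weak-map property, and the nontrivial combinatorial content is showing that a flippable (empty) zone always exists. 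Your ``tangent corridor'' / double-pseudoline contraction is the right intuition for \emph{what} object to produce, but you have not supplied the mechanism that controls the convex-independence pattern en route.

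The inflation direction is essentially correct in spirit, though it is delicate to execute by locally straightening pseudolines in a topological affine plane. The paper instead works entirely in the dual: extend the wiring diagram to its double cover on the cylinder, smooth the curves, and use Blaschke's criterion that $f + f'' > 0$ characterizes support functions, so adding a large constant turns the system into the dual of an actual arrangement of bodies (Lemma~\ref{realz}). That sidesteps the local-straightening issues and also makes clear that the realizing arrangement is orientable, hence non-crossing.
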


Here is the idea of the proof. For the lower bound we use the fact
that a generalized configuration has a dual representation as a marked
pseudoline arrangement, i.e. a wiring diagram \cite{folkman,
  goodburr}. Using this representation we show that every generalized
configuration can be represented by an arrangement of bodies in the
Euclidean plane. This shows that $f(n)\leq h_1(n)$.  

To establish the reverse inequality we start with an arrangement of
bodies and consider its dual system of support curves drawn on the
cylinder $\mb{S}^1\times \mb{R}^1$. This system of curves induces a
cell complex which encodes the convexity properties of the
arrangement. We show how to modify this complex by elementary
operations, similar to those of Habert and Pocchiola \cite{habert} and
Ringel \cite{ringel}, while maintaining control of the convexity
properties of the arrangement. The process ends with a complex induced
by an arrangement representing a generalized configuration. The
details are given in section \ref{proof1}. 

In view of Theorem \ref{equivES} we obtain the following bounds.

\begin{figure}[h!]\centering
\begin{tikzpicture}
\node at (0,3) {$2^{n-2}+1 \leq f(n) \leq h_0(n) \leq h_1(n) = g(n)
  \leq \binom{2n-5}{n-2}+1$};
\node at (6,3) {(for $n\geq 7$)};
\node at (0,2.2) {$2^{n-2}+1 = f(n) = h_0(n) = h_1(n) = g(n)$};
\node at (6,2.2) {(for $n\leq 6$)};
\end{tikzpicture}
\end{figure}

Our proof actually provides a general procedure for reducing
non-crossing arrangements to generalized configurations, and can
therefore be applied to the multitude of Erd\H{o}s-Szekeres-type
results previously proven separately for point sets, then for
arrangements of bodies. (See for instance \cite{baranyES, pach-soly1,
  partitionES, por-convex}.)  In particular we obtain the positive
fraction version and the partitioned version of the Erd\H{o}s-Szekeres
theorem for non-crossing arrangements. This will be discussed in
section \ref{extensions}. 

\begin{remark}
It should be noted that the condition that the bodies are convex is
not strictly necessary. The theorems also hold for any arrangement of
compact sets $\{A_1, \dots, A_n\}$ provided we impose conditions
on the arrangement $\{\conv (A_1), \dots, \conv(A_n)\}$. However, no
real generality is gained by this formulation, so we restrict
ourselves to arrangements of convex bodies to make our statements simpler. 
\end{remark}

\section{Proof of Theorem \ref{equivES}} \label{proof1}

\subsection{Duality} We call a compact convex subset of $\mb{R}^2$ a
\df{body}, and a finite collection of at least three bodies an
\df{arrangement}. For a body $A$, recall its support function $h_A :
\mb{S}^1 \to \mb{R}^1$ on the unit circle \[h_A(\theta) := \max_{x\in
  A} \langle \theta, x \rangle\] The \df{dual} of a body $A$ is the graph
of its support function drawn on the cylinder
$\mb{S}^1\times\mb{R}^1$, i.e. \[A^* := \{(\theta, h_A(\theta)):
\theta \in \mb{S}^1\}\] We implicitly assume that the unit circle is
oriented, and the dual curves should therefore be thought of as {\em
  directed} curves. It is important to make a consistent choice of
orientation throughout, so  we fix
the positive orientation to be the {\em counter-clockwise direction}. 

Notice that for each $\theta \in \mb{S}^1$ the value $h_A(\theta)$
measures the oriented distance from the origin to the directed
\df{supporting tangent} of $A$ with direction $\theta + \frac{\pi}{2}$
which has the body $A$ on its left side. For instance, the dual of a
point distinct from the origin is a sine curve, while the dual of a
disk centered at the origin is the graph of the constant
function. Note that if $h_A(\theta)\geq 0$ for all $\theta$, then the
origin is contained in the body $A$. See \fig \ref{hull} (left). 

We use the term \df{system} when referring to a finite collection of
at least three curves on $\mb{S}^1 \times \mb{R}^1$ which are graphs
of continuous functions $\gamma: \mb{S}^1 \to \mb{R}^1$. In this way
every arrangement $\cal A$ is associated with its dual system $\cal
A^*$. Notice that a body is uniquely determined by its support
function (see section 2.2 in \cite{groemer}), and consequently an
arrangement is uniquely determined by its dual system. Moreover, if a
pair of dual curves intersect, that is, if $h_A(\theta) = h_B(\theta)$
for some $\theta\in \mb{S}^1$, then bodies the $A$ and $B$ have a
\df{common supporting tangent} in the direction $\theta
+\frac{\pi}{2}$. The \df{upper envelope} of a system given by
functions $\{\gamma_i\}$ is the graph of the function $\max_i
\gamma_i$. The following observation implies that the convexity
properties of an arrangement can be determined by its dual system.  

\begin{obs} \label{hullenvy}
An arrangement is convexly independent if and only if every curve appears on the upper envelope of the dual system.
\end{obs}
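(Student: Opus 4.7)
The proof will lean on two classical facts about support functions. First, for any convex bodies $B_1,\dots,B_k$ in $\mb{R}^2$, the support function of their convex hull is the pointwise maximum of their support functions,
\[
h_{\conv(B_1\cup\dots\cup B_k)}(\theta)=\max_{i}h_{B_i}(\theta).
\]
Second, for convex bodies $A,B$, the inclusion $A\subseteq B$ is equivalent to $h_A(\theta)\leq h_B(\theta)$ for all $\theta\in\mb{S}^1$. Both are standard consequences of the separating hyperplane theorem, and are recorded in \cite{groemer}.

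\textbf{Main argument.} Fix an arrangement $\cal A=\{A_1,\dots,A_n\}$ with dual system $\cal A^*=\{A_1^*,\dots,A_n^*\}$. Combining the two facts, $A_i\subseteq\conv\bigl(\bigcup_{j\neq i}A_j\bigr)$ holds if and only if
\[
h_{A_i}(\theta)\ \leq\ \max_{j\neq i}h_{A_j}(\theta)\qquad\text{for every }\theta\in\mb{S}^1.
\]
Negating this, $A_i$ fails to lie in the convex hull of the others precisely when there is some direction $\theta_0$ at which $h_{A_i}(\theta_0)>h_{A_j}(\theta_0)$ for all $j\neq i$, i.e. precisely when $A_i^*$ lies strictly above every other curve of $\cal A^*$ at $\theta_0$. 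This is exactly the assertion that $A_i^*$ coincides with the upper envelope at $\theta_0$. Since the support functions are continuous, this strict inequality propagates to an open arc containing $\theta_0$, so $A_i^*$ contributes an honest arc to the upper envelope graph $\{(\theta,\max_j h_{A_j}(\theta)):\theta\in\mb{S}^1\}$. Applying the equivalence to each $i$ proves that $\cal A$ is convexly independent if and only if every $A_i^*$ appears on the upper envelope of $\cal A^*$.

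\textbf{Expected obstacle.} The only delicate point is handling ties: if two dual curves pass through the same point, both technically meet the envelope graph there, yet such a tie does not by itself certify that $A_i\not\subseteq\conv\bigl(\bigcup_{j\neq i}A_j\bigr)$. I would resolve this by interpreting ``appears on the upper envelope'' as ``meets the envelope at a point where it is the strict maximum'' (equivalently, contributes a non-degenerate arc), and remarking that, by the continuity argument above, this is the correct reading for both directions of the equivalence. In the generic situation relevant to the rest of the paper (where distinct dual curves cross transversally and only finitely often), the distinction is immaterial, since a point of strict dominance automatically extends to a maximal arc on which $A_i^*$ alone realizes the envelope.
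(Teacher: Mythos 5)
Your proof is correct and is essentially the same argument as the paper's: both reduce the claim to the observation that $A_i$ escapes the convex hull of the others precisely when $h_{A_i}$ strictly dominates all other support functions at some direction, the paper phrasing this via a supporting tangent of $A_i$ disjoint from the remaining bodies and you phrasing it via the identities $h_{\conv(\bigcup B_j)}=\max_j h_{B_j}$ and $A\subseteq B\iff h_A\leq h_B$. Your remark about ties (reading ``appears on the upper envelope'' as contributing a non-degenerate arc) is a genuine clarification the paper leaves implicit, though under the standing genericity assumption it is, as you say, immaterial.
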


To see why this holds, notice that an arrangement $\cal A$ is convexly
independent if and only if for any body $B\in {\cal A}$, the convex
hull of the union of the members of $\cal A$ can be supported by a
supporting tangent of $B$ which is disjoint from every member of $\cal
A\setminus \{B\}$. If $\alpha$ is the direction of such a supporting
tangent of $B$, then $h_B(\alpha-\frac{\pi}{2}) >
h_A(\alpha-\frac{\pi}{2})$ for every $A \in {\cal A}\setminus
\{B\}$. See \fig \ref{hull} (right). 

\begin{figure}[h!]
  \centering
  
\begin{tikzpicture}

\begin{scope}[scale=.7, xshift = -11cm]
    
\begin{scope}[rotate = 47]
\fill[lightgray, opacity = .5]
(3,0.5) .. controls +(0:1) and +(70:1) .. 
(5,-1.5) .. controls +(70:-1) and +(-5:1) ..
(2,-3.7) .. controls +(-5:-1) and +(80:-1) .. 
(1,-1.5) .. controls +(80:1) and +(0:-1) ..
(3,0.5);
\draw[thick]
(3,0.5) .. controls +(0:1) and +(70:1) .. 
(5,-1.5) .. controls +(70:-1) and +(-5:1) ..
(2,-3.7) .. controls +(-5:-1) and +(80:-1) .. 
(1,-1.5) .. controls +(80:1) and +(0:-1) ..
(3,0.5);
\end{scope}

\begin{scope}[xshift =3cm, yshift= 1cm]

\begin{scope}
\draw[-latex](0,0) -- (3.4,0);
\draw[-latex](0,0) ++(0:1.2cm) arc (0:37:1.2cm);   
\node at (.9,.33) {\ft $\theta$};
\end{scope}

\begin{scope}[rotate =37]
\draw[-latex](0,0) -- (3.4,0);  
\end{scope}

\begin{scope}[rotate=37, xshift = 2.21cm]
\draw[black!50!blue,-latex](0,-3) -- (0,3);
\draw[black!50!blue](0,.2) -- (.2,.2) -- (.2,0);
\end{scope}

\end{scope}

\end{scope}

\begin{scope}[scale = .5, rotate = 18]

\begin{scope}[xshift = 1cm, yshift = .5cm]
 \fill[lightgray, opacity = .5]
(0,0) .. controls +(0:1) and +(50:1) .. 
 (2,-2) .. controls +(50:-1) and +(-5:1) ..
 (-1,-3) .. controls +(-5:-1) and +(0:-1) ..
 (0,0);
 \draw[thick]
 (0,0) .. controls +(0:1) and +(50:1) .. 
 (2,-2) .. controls +(50:-1) and +(-5:1) ..
 (-1,-3) .. controls +(-5:-1) and +(0:-1) ..
 (0,0);
\end{scope}

\begin{scope}[yshift = 1.6cm, xshift = -.3cm]
\fill[lightgray, opacity = .5]
(3,0.5) .. controls +(0:1) and +(70:1) .. 
 (5,-1.5) .. controls +(70:-1) and +(-5:1) ..
 (2,-2.7) .. controls +(-5:-1) and +(80:-1) .. 
 (1,-1.5) .. controls +(80:1) and +(0:-1) ..
 (3,0.5);
\draw[thick]
 (3,0.5) .. controls +(0:1) and +(70:1) .. 
 (5,-1.5) .. controls +(70:-1) and +(-5:1) ..
 (2,-2.7) .. controls +(-5:-1) and +(80:-1) .. 
 (1,-1.5) .. controls +(80:1) and +(0:-1) ..
 (3,0.5);
 \end{scope}

\begin{scope}[yshift = -1cm, rotate = 45]
\path (-1,-3) ++(-5:5) coordinate (x);
\fill[lightgray, opacity = .5]
(5,0) .. controls +(0:2) and +(0:1) ..
 (x) .. controls +(0:-2) and +(0:-1) ..
 (5,0);
\draw[thick]
 (5,0) .. controls +(0:2) and +(0:1) ..
 (x) .. controls +(0:-2) and +(0:-1) ..
 (5,0);
\end{scope}

\begin{scope}[scale = -.9, xshift = -4cm]
\fill[lightgray, opacity = .5]
(3,0.5) .. controls +(0:1) and +(70:1) .. 
 (5,-1.5) .. controls +(70:-1) and +(-5:1) ..
 (2,-3.7) .. controls +(-5:-1) and +(80:-1) .. 
 (1,-1.5) .. controls +(80:1) and +(0:-1) ..
 (3,0.5);
 \draw[thick]
 (3,0.5) .. controls +(0:1) and +(70:1) .. 
 (5,-1.5) .. controls +(70:-1) and +(-5:1) ..
 (2,-3.7) .. controls +(-5:-1) and +(80:-1) .. 
 (1,-1.5) .. controls +(80:1) and +(0:-1) ..
 (3,0.5);
\end{scope}

\begin{scope}[yshift = -2cm, xshift =5.5cm ]
\draw[blue!50!black,-latex] (0,-1) -- (0,6);
\draw (0,2.8) -- (.3,2.8) -- (.3,2.5);
\draw[-latex] (-7,2.5) -- (2,2.5);

\node [above] at (0,6) {\ft $\alpha$};
\node [right] at (2,2.5) {\ft $\alpha-\frac{\pi}{2}$};
\end{scope}

\end{scope}
\end{tikzpicture}
\caption{\ft {\bf Left:} The support function $h_A(\theta)$ measures the distance between the origin and the directed supporting tangent in the direction $\theta+\frac{\pi}{2}$. {\bf Right:} If a body appears on the convex hull, then it has a supporting tangent which also supports the convex hull of the union of the members of the arrangement.}
\label{hull}
\end{figure}
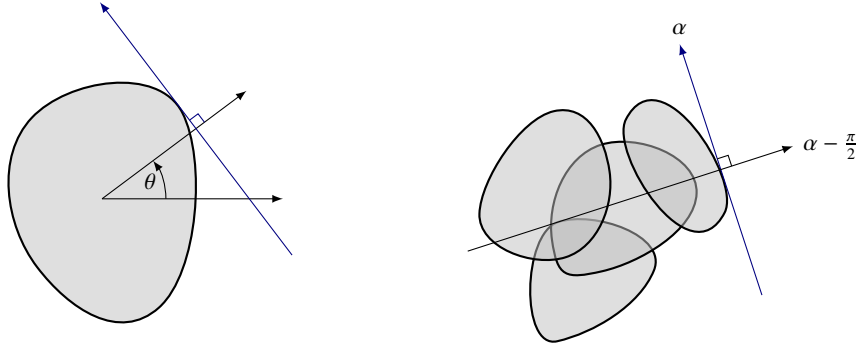

\subsection{Generic arrangements}

An arrangement is \df{generic} if the following hold. 

\begin{itemize}
\item For any pair of bodies $A_1$ and $A_2$ with common supporting 
  tangent $\ell$ the intersection $A_1 \cap A_2 \cap \ell$ is empty. 
\item No triple of bodies share a common supporting tangent. 
\end{itemize}

A standard perturbation argument shows that the optimal values for
$h_1(n)$ can be attained by generic arrangements, so hereby {\em all
  arrangements are assumed to be generic}. The conditions above imply
the following for the dual system. 

\begin{obs} \label{generic} For the dual system of a generic
  arrangement, each pair of curves intersect transversally and no
  three curves intersect in a common point. \end{obs}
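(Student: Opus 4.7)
The plan is to translate each bullet in the definition of \emph{generic} directly into the corresponding conclusion via the dictionary between crossings of dual curves and common supporting tangents. If three curves $A_1^*, A_2^*, A_3^*$ shared a point $(\theta_0, y_0)$, then $h_{A_1}(\theta_0) = h_{A_2}(\theta_0) = h_{A_3}(\theta_0) = y_0$, and the line $\ell = \{x \in \mathbb{R}^2 : \langle \theta_0, x\rangle = y_0\}$ would be a common supporting tangent of $A_1, A_2, A_3$ in direction $\theta_0 + \tfrac{\pi}{2}$, contradicting the second genericity condition. This settles the triple-intersection part.

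For transversality, let $A_1^*$ and $A_2^*$ meet at $(\theta_0, y_0)$, and let $\ell$ be the associated common supporting tangent. The first genericity condition guarantees that the contact sets $C_i := A_i \cap \ell$ are disjoint; since each is a nonempty compact convex subset of $\ell$, they are disjoint closed intervals (possibly singletons) on $\ell$. The main input I would invoke is the standard envelope-theorem identification of the one-sided derivatives of a support function with the extreme contact points, namely
\[
\partial_\pm h_{A_i}(\theta_0) \;=\; \langle \theta_0', x_i^\pm\rangle,
\]
where $\theta_0'$ is the unit tangent to $\mathbb{S}^1$ at $\theta_0$ (which points along $\ell$) and $x_i^\pm$ are the two endpoints of $C_i$, with $x_i^+$ chosen to be the endpoint of larger $\theta_0'$-coordinate. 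Since $C_1$ and $C_2$ are disjoint intervals on $\ell$, one of them (say $C_1$) lies entirely on one side of the other, which forces $x_1^- < x_2^-$ and $x_1^+ < x_2^+$ in the $\theta_0'$-direction. Hence both one-sided derivatives of $h_{A_1} - h_{A_2}$ at $\theta_0$ are strictly negative, so $h_{A_1} - h_{A_2}$ strictly changes sign at $\theta_0$ and the two graphs cross transversally there.

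The only non-trivial ingredient is the envelope-type identification of $\partial_\pm h_A(\theta_0)$ with the endpoints of the contact set; this is a standard fact about support functions of convex bodies, which I would cite from \cite{groemer}. Everything else is an unpacking of the dual correspondence already established in the preceding paragraphs, so I do not anticipate any real obstacle.
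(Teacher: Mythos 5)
Your proposal is correct; the paper states this observation without proof, treating it as an immediate consequence of the genericity conditions, and your argument supplies exactly the missing verification by the same translation through the dual correspondence. The one-sided-derivative identification for support functions is indeed the right tool for turning disjointness of the contact sets $A_i\cap\ell$ into a strict sign change of $h_{A_1}-h_{A_2}$ at the crossing, and the triple-intersection part is the immediate translation you give.
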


\subsection{Review of generalized configurations} \label{generalizedC}

In the sequel it will be useful to recall the duality between
generalized configurations and pseudoline arrangements. This is
a combinatorial version of the classical projective duality between
points and lines that extends to the realm of generalized
configurations and pseudoline arrangements. From a combinatorial point
of view this is very similar to our duality for arrangements of
bodies, and the connection is crucial for relating generalized
configurations to arrangements of bodies.

We start by recalling the notion of the {\em allowable sequence} of a
set of points in the plane. We will assume that this point set is in
a strongly general position, meaning no three points are collinear and
no two lines determined by the points are parallel.\footnote{The
general theory developed by Goodman and Pollack \cite{goodburr,
  GPallow} does not require this assumption, but for us it is no loss
of generality.} Let $P$ be a set of $n$
labeled points in strongly general position in the plane and consider a generic
directed line 
$l_1$. If we project the points orthogonally onto the line $l_1$, then
the direction of $l_1$ will induce a linear ordering of the points which
we record as a permutation $\pi_1 = \pi_1(P)$. As the line rotates
counter-clockwise about a fixed point this ordering will change each
time the line becomes orthogonal to a direction determined by a pair
of points in $P$, resulting in a periodic sequence of
permutations \[\dots, \pi_1, \pi_2, \dots, \pi_{n(n-1)}, \pi_1, \pi_2,
\dots\] which is called the allowable sequence of $P$. Notice that the
allowable sequence satisfies the following properties
\begin{enumerate}
\item Any two consecutive terms $\pi_i$ and $\pi_{i+1}$ differ by
  reversing the order of two adjacent elements.
\item In any $\binom{n}{2}$ consecutive terms of the sequence each
  pair of elements of $P$ switches exactly once.
\end{enumerate}
It is an immediate consequence that for all $i$, the permutation
$\pi_{i+\binom{n}{2}}$ is the reverse of $\pi_i$. Every allowable
sequence determines a periodic sequence of ordered switches. That is,
rather than writing down each permutation, we only record which order
pair switches at each step. The convention is to record the order of
the pair {\em before} they switch, so for instance, the consecutive
pair of permutations  
\[(\cdots, a, b, \cdots) \rightarrow (\cdots, b, a,\cdots)\]
will be recorded as the ordered switch $ab$, and consequently,
$\binom{n}{2}$ steps later  
we get the ordered switch $ba$. It turns out that an allowable
sequence is determined by its sequence of ordered switches, which is shown
in Proposition 2.6 of \cite{GPallow}. For instance, the following half-period
of a sequence of ordered switches
\[\dots, dc, ac, bc, ad, bd, ba, \dots\]
uniquely determines the following sequence of permutations
\[\dots, ba\underline{dc}, b\underline{ac}d, \underline{bc}ad,
cb\underline{ad}, c\underline{bd}a, cd\underline{ba}, cdab, \dots\]

More generally, any sequence of permutations which satisfies properties
(1) and (2) is called an \df{allowable sequence}, but not every such
sequence can be obtained from a set of points in the plane by the
procedure described above. This is where generalized configurations
come in to play: For an ordered pair of points, $a$ and $b$, of a
generalized configuration, consider  the directed pseudoline which
first passes through $a$ then through $b$, and label the point where
it intersects the distinguished 
line at infinity by the ordered pair $ab$ (thus the antipodal point is
labeled $ba$). In this way we obtain a cyclic sequence of the ordered
pairs of points which is antipodal in the sense that a half-period
after the term $ab$ we get the reverse pair $ba$. It turns out that
this sequence of ordered pairs is precisely the sequence of ordered
switches of an allowable sequence, and Theorem 4.4 of \cite{GPsemi}
shows that {\em every allowable sequence} can be obtained in this
way. 

Clearly an allowable sequence is uniquely determined by any one of its
half-periods, which can be encoded by a so-called
\df{wiring diagram}, resulting in the combinatorial dual pseudoline
arrangement. Let 
$\pi_1, \dots, \pi_N$ be the permutations of  
some half period of the allowable sequence and $S_i$ the ordered
switch from $\pi_i$ to $\pi_{i+1}$ (where $\pi_{N+1}$ is the reverse
of $\pi_1$). 
 Construct the wiring diagram, starting with horizontal
``wires'' going from left to right, and labeled by the elements of the
permutations in the order in which they appear in the 
permutation $\pi_1$ from bottom to top. Apply the switch $S_1$ by
crossing the wires corresponding to the elements appearing in the
switch $S_1$. After all switches have been applied, each pair of wires
will have crossed precisely once, and we arrive at the reverse of the
initial permutation. See \fig \ref{allowable}.

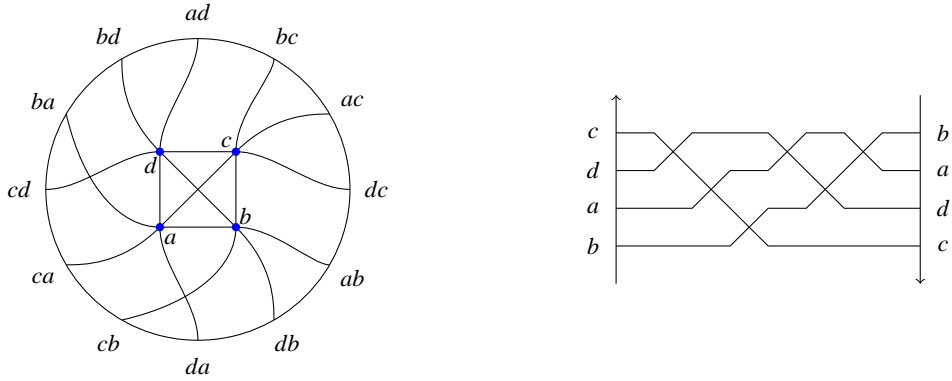
\begin{figure}[h!]
\centering

\begin{tikzpicture}
  
\begin{scope}[scale = .5]
\draw (0,0) circle [radius = 4cm];
\draw (180:4cm) .. controls (-3,0) and (-2,1) .. (-1,1) -- (1,1)
.. controls (2,1) and (3,0) .. (0:4cm); 
\draw (150:4cm) .. controls (-3,0) and (-2,-1) .. (-1,-1) -- (1,-1)
.. controls (2,-1) and (3,-2) .. (330:4cm); 
\draw (210:4cm) .. controls (-3,-2) and (-2,-2) .. (-1,-1) -- (1,1)
.. controls (2,2) and (3,2) .. (30:4cm); 
\draw (240:4cm) .. controls (0,-3) and (1,-2) .. (1,-1) -- (1,1)
.. controls (1,2) and (2,3) .. (60:4cm); 
\draw (270:4cm) .. controls (0,-3) and (-1,-2) .. (-1,-1) -- (-1,1)
.. controls (-1,2) and (0,3) .. (90:4cm); 
\draw (300:4cm) .. controls (2,-3) and (2,-2) .. (1,-1) -- (-1,1)
.. controls (-2,2) and (-2,3) .. (120:4cm); 
\fill[blue] (1,1) circle [radius = .11cm];
\fill[blue] (-1,1) circle [radius = .11cm];
\fill[blue] (-1,-1) circle [radius = .11cm];
\fill[blue] (1,-1) circle [radius = .11cm];
\node at (-.72,-1.3) {\ft $a$};
\node at (1.25, -.7) {\ft $b$};
\node at (.75,1.25) {\ft $c$};
\node at (-1.24,.65) {\ft $d$};
\node at (0:4.7cm) {\ft $dc$};
\node at (30:4.7cm) {\ft $ac$};
\node at (60:4.7cm) {\ft $bc$};
\node at (90:4.7cm) {\ft $ad$};
\node at (120:4.7cm) {\ft $bd$};
\node at (150:4.7cm) {\ft $ba$};
\node at (180:4.7cm) {\ft $cd$};
\node at (210:4.7cm) {\ft $ca$};
\node at (240:4.7cm) {\ft $cb$};
\node at (270:4.7cm) {\ft $da$};
\node at (300:4.7cm) {\ft $db$};
\node at (330:4.7cm) {\ft $ab$};
\end{scope}


\begin{scope}[yshift = -.75cm, xshift = 5.5cm, scale = .5]

\node at (-.6,3) {\ft $c$};
\node at (-.6,2) {\ft $d$};
\node at (-.6,1) {\ft $a$};
\node at (-.6,0) {\ft $b$};

\draw(0,3) \ls\ld \ld \ld \ls \ls \ls \ls;
\draw(0,2) \ls\lu \ls \ls \ld \ld \ls \ls;
\draw(0,1) \ls\ls \lu \ls \lu \ls \ld \ls;
\draw(0,0) \ls\ls \ls \lu \ls \lu \lu \ls;

\draw[->] (0,-1) -- (0,4); 
\draw[<-] (8,-1) -- (8,4); 

\node at (8.6,0) {\ft $c$};
\node at (8.6,1) {\ft $d$};
\node at (8.6,2) {\ft $a$};
\node at (8.6,3) {\ft $b$};
\end{scope}

\end{tikzpicture}

\caption{\ft {\bf Left:} A generalized configuration with the labeling on the line
  at infinity. {\bf Right:}The dual wiring diagram  corresponding to the
  half-period of ordered switches $(dc,ac,bc,ad,bd,ba)$.} 
\label{allowable}
\end{figure}

\subsection{Non-crossing and orientable
  arrangements} \label{orientable} 

A pair of bodies $A_1$, $A_2$ is \df{non-crossing} if $A_1 \setminus
A_2$ is connected, or equivalently, if $A_1$ and $A_2$ have precisely
two common supporting tangents. Notice that by Observation
\ref{generic}, the dual curves of a non-crossing pair of bodies will
meet in precisely two crossing points. 

A triple of bodies $A_1$, $A_2$, $A_3$ is \df{orientable} if every
pair is non-crossing and $\conv (A_i\cup A_j) \setminus A_k$ is simply
connected for all choices of distinct $i$, $j$, $k$, or equivalently,
the convex hull of $A_1 \cup A_2 \cup A_3$ is supported by exactly
three of the common supporting tangents determined by the pairs $A_i,
A_j$. A \df{non-crossing arrangement} is one in which each pair is
non-crossing, and an \df{orientable arrangement} is one in which each
triple is orientable.   

Each member of an orientable triple contributes a single connected arc
to the boundary of its convex hull, so traversing the boundary of its
convex hull in the counter-clockwise direction will impose a cyclic
ordering of the triple. Notice that in the dual system of an oriented
triple, each curve appears precisely once on the upper envelope in the
same cyclic order as the one we get by traversing the convex hull of
the bodies. See \fig \ref{orry}.

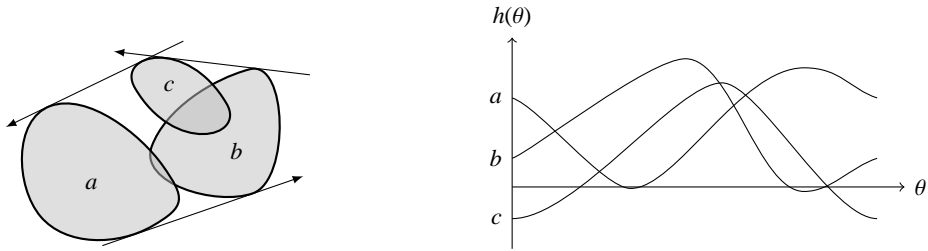
\begin{figure}[h!]
\centering

\begin{tikzpicture}
  
\begin{scope}[scale = .52]

\begin{scope}[rotate = 130]

\begin{scope}[yshift = -4cm, rotate = 70]
\fill[lightgray, opacity=.5]
(0,0) .. controls +(0:1) and +(50:1) .. 
 (2,-2) .. controls +(50:-1) and +(-5:1) ..
 (-1,-3) .. controls +(-5:-1) and +(0:-1) ..
 (0,0);
\draw[thick]
 (0,0) .. controls +(0:1) and +(50:1) .. 
 (2,-2) .. controls +(50:-1) and +(-5:1) ..
 (-1,-3) .. controls +(-5:-1) and +(0:-1) ..
 (0,0); 
\end{scope}

\begin{scope}
\fill[lightgray, opacity=.5]
(3,0.5) .. controls +(0:1) and +(70:1) .. 
 (5,-1.5) .. controls +(70:-1) and +(-5:1) ..
 (2,-2.7) .. controls +(-5:-1) and +(80:-1) .. 
 (1,-1.5) .. controls +(80:1) and +(0:-1) ..
 (3,0.5);
\draw[thick]
 (3,0.5) .. controls +(0:1) and +(70:1) .. 
 (5,-1.5) .. controls +(70:-1) and +(-5:1) ..
 (2,-2.7) .. controls +(-5:-1) and +(80:-1) .. 
 (1,-1.5) .. controls +(80:1) and +(0:-1) ..
 (3,0.5);
\path (-1,-3) ++(-5:5) coordinate (x);
\end{scope}

\begin{scope}[rotate = -10, yshift = -4.5cm, xshift = -2cm]
\fill[lightgray, opacity=.5]
(5,0) .. controls +(0:2) and +(0:1) ..
(x) .. controls +(0:-2) and +(0:-1) ..
(5,0);
\draw[thick]
 (5,0) .. controls +(0:2) and +(0:1) ..
 (x) .. controls +(0:-2) and +(0:-1) ..
 (5,0);
\end{scope}
\end{scope}

\begin{scope}[yshift = 1.2cm, rotate = -2.6, xshift = -.75cm]
  \draw[-latex] (0,0) --(5,2);
\end{scope}

\begin{scope}[xshift = 4.5cm, yshift = 5.59cm]
\draw[-latex] (0,0) -- (173.15: 5cm); 
\end{scope}

\begin{scope}[xshift = 1.3cm, yshift = 6.45cm]
\draw[-latex] (0,0) -- (206: 5cm);
\end{scope}

\begin{scope}
  \node at (110:3cm) {\ft $a$};
  \node at (54: 4.5cm) {\ft $b$};
  \node at (80: 5.4cm) {\ft $c$};
\end{scope}
\end{scope}

\begin{scope}[yshift = 1.4cm, xshift = 5cm, xscale = 3, scale = .4]
\draw 
(0,3) .. controls (.3,2.7) and (1,0) .. 
(1.3,0) .. controls (1.8,0) and (2.5,4) .. 
(3.2,4) .. controls (3.6,4) and (3.7,3.3) .. (4,3);

\draw 
(0,1) .. controls (.2,1.2) and (1.5,4.3) .. 
(1.9,4.3) .. controls (2.4,4.3) and (2.7,-.1) .. 
(3.2,-.1) .. controls (3.5,-.1) and (3.7,0.7) .. (4,1);

\draw 
(0,-1) .. controls (.7,-1) and (1.8,3.5) .. 
(2.3,3.5) .. controls (2.7,3.5) and (3.5,-1) .. (4,-1);

\draw[->] (0,-2) -- (0,5);
\draw[->] (0,.05) -- (4.3,.05);

\node [left] at (0,3) {\ft $a$};
\node [left] at (0,1) {\ft $b$};
\node [left] at (0,-1) {\ft $c$};

\node[right] at (4.3,.05) {\ft $\theta$};
\node[above] at (0,5) {\ft $h(\theta)$};
\end{scope}
\end{tikzpicture}
  
\caption{\ft {\bf Left:} A typical orientable triple with the three
  common supporting tangents which support the convex hull. Between
  consecutive common supporting tangents the boundary of the convex
  hull consists of a boundary arc of one of the bodies, which induces
  a cyclic ordering of the bodies. {\bf Right:} The dual system of an
  orientable triple. The cyclic order in which the curves appear on
  the upper envelope (when traversed from left to right) coincides
  with the cyclic order in which we meet the bodies when traversing
  the boundary of the convex hull in the counter-clockwise order.} 
\label{orry}
\end{figure}

It is easily verified that the set of cyclic orderings of all triples
of an orientable arrangement satisfy the chirotope axioms of a rank 3
uniform oriented matroid (Definition 3.5.3 of \cite{OMS}), or
equivalently the axioms of a 
CC-system (see Section 1 of \cite{knuti}).\footnote{Gr\"{u}nbaum
  implicitly makes this 
  observation in his discussion on planar arrangements of simple
  curves in Section 3.3 of \cite{grunbaumS}.}
This means that for every orientable arrangement ${\cal A}$, there
exists a generalized configuration $\cal P$ and a bijection $\phi:
\cal A \to \cal P$ which preserves the cyclic ordering of every
triple. (The cyclic ordering of a triple in a generalized
configuration is defined, as before, by traversing the boundary of its
convex hull in the counter-clockwise direction and reading off the
cyclic order in which we meet the points.) See \fig \ref{pappus}. 

\begin{figure}[h!] \centering  
\begin{tikzpicture}
\begin{scope}[scale= -.8]
\clip (0,0) circle (3.2);
\foreach \x in {0,72,...,288}
{\draw
 ({\x-72}:1) .. controls +({\x-18}:1) and +(0,0) .. ({\x-15}:3.2)
 ({\x-72}:1) -- + +({\x-18}:-4)
 (\x:1) .. controls +({\x-18}:1) and +(0,0) .. ({\x-27}:3.2)
 (\x:1) -- +({\x-18}:-5);}
\foreach \x in {0,72,...,288}
{\fill[blue]
  (\x:1) circle (2pt);
\fill[blue]
  ({\x-21}:3) circle (2pt);}
\end{scope}

\begin{scope}[scale=-0.4, xshift = 20cm]
\foreach \x in {0,...,4}
{\draw[fill=blue!25]
 ({72*\x+27}:0.9) +({72*\x}:-0.4) coordinate (c\x) -- +({72*\x}:0.4)
 coordinate (b\x) -- +({72*\x+90}:-1.2) coordinate (a\x) -- cycle; 
\draw[fill=blue!25]
 ({72*\x-17.5}:7) +({72*\x}:-0.4) -- +({72*\x}:0.4) --
 +({72*\x+90}:-1.2) -- cycle; }
\foreach \i in {0,...,4} 
{\draw[thin, dotted] 
 let \n1={int(mod(\i+2,5))} 
 in (a\n1) -- ($(a\n1)!6!(b\i)$);
\draw[thin, dotted]
 let \n1={int(mod(\i+1,5))} 
 in (a\i) -- ($(a\i)!6!(c\n1)$);
\draw[thin, dotted]
 let \n2={int(mod(\i+1,5))}
 in (b\i) -- ($(b\i)!6!(b\n2)$);}
\end{scope}
\end{tikzpicture}
\caption{\ft An arrangement of convex bodies (left) and a realization
  of this arrangement by a generalized configuration (right). In fact
  this configuration is based on Goodman and Pollack's ``bad
  pentagon'' \cite{GPallow} and can not be realized by points and
  straight lines.} \label{pappus} 
\end{figure}
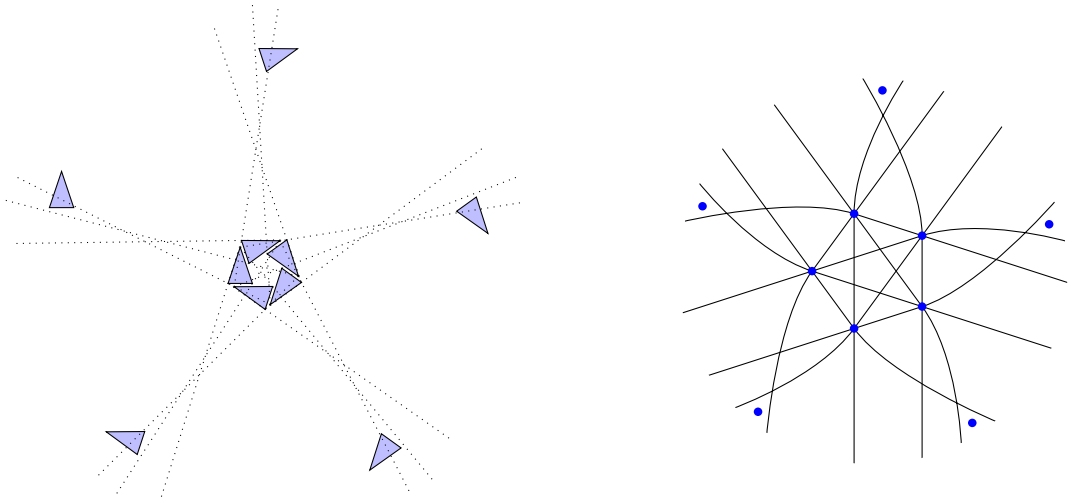

When there exists such an order-preserving bijection as described
above, we say that the arrangement is \df{realizable} by the
generalized configuration, and we may also say that the
generalized configuration is realizable by the arrangement. Our
discussion above implies the following.

\begin{lemma}\label{converserep} Every orientable arrangement is
  realizable by a generalized configuration. \end{lemma}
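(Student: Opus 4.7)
The plan is to produce the desired order-preserving bijection by verifying that the cyclic orderings induced on the triples of an orientable arrangement $\cal A$ satisfy the chirotope axioms of a uniform rank 3 oriented matroid (equivalently, the CC-system axioms), and then invoking the known equivalence between such combinatorial structures and generalized configurations (the direction cited from \cite{OMS, knuti}, which in turn corresponds to the realizability of allowable sequences by pseudoline arrangements).

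First I would define the sign function $\chi : {\cal A}^3 \to \{+,-\}$ on ordered triples of distinct bodies by setting $\chi(A,B,C) = +$ precisely when $(A,B,C)$ is the cyclic order in which these bodies are encountered while traversing $\partial \conv(A\cup B\cup C)$ counter-clockwise. Orientability guarantees that this is well-defined: each body contributes exactly one connected boundary arc, separated from the others by the three common supporting tangents that support the triple's convex hull. The cyclic symmetry $\chi(A,B,C) = \chi(B,C,A)$ and the antisymmetry under transposition are then built into the definition, and non-degeneracy $\chi(A,B,C) \neq 0$ is likewise immediate from orientability.

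The substantive step is to verify the five-point (Grassmann--Plücker, or ``interior'') axiom for every five bodies $A_1,\dots,A_5 \in \cal A$. I would argue this by working in the dual system: by Observation~\ref{generic} the curves $A_i^*$ pairwise cross transversally in exactly two points (non-crossing), and orientability of each triple translates, via the correspondence in Figure~\ref{orry}, to the statement that the three dual curves of the triple appear on the upper envelope in the same cyclic order as on $\partial\conv(A_i\cup A_j\cup A_k)$. Restricting attention to the five dual curves on the cylinder $\mb{S}^1 \times \mb{R}^1$, this local triple consistency, together with transversality and the absence of triple intersections, lets one read off the same crossing pattern as an arrangement of five pseudolines on the Möbius strip obtained by quotienting the cylinder by antipodality; the chirotope relation for $\chi$ on $\{A_1,\dots,A_5\}$ is then precisely the Grassmann--Plücker relation satisfied by any pseudoline arrangement.

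Once the chirotope axioms are checked, Theorem 4.4 of \cite{GPsemi} (or equivalently the results cited in \cite{OMS, knuti}) produces a generalized configuration $\cal P$ whose chirotope equals $\chi$, together with a bijection $\phi : \cal{A} \to \cal{P}$ matching cyclic orderings of triples; this is exactly the realizability claimed. The main obstacle is the five-point axiom: although the authors dismiss it as ``easily verified,'' all the geometric content sits there, and one must be careful that the connection between orientability of triples and the pseudoline-like behavior of common supporting tangents survives intact when five bodies are superimposed. Everything else is bookkeeping.
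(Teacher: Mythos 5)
Your high-level strategy matches the paper's: define the chirotope $\chi$ from the cyclic orderings of orientable triples, verify the CC-system / rank-3 uniform oriented matroid axioms, and invoke the representability of such structures by generalized configurations (the paper cites \cite{OMS,knuti} and, implicitly, the Goodman--Pollack / Folkman--Lawrence correspondence). The paper disposes of the axiom check with ``it is easily verified'' plus a footnote pointing to Gr\"unbaum, and then invokes the known representation theorem. You correctly identify the five-point (Grassmann--Pl\"ucker / interiority) axiom as the step where the real geometric content lives.

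The argument you offer for that step does not go through as written. You assert that the five dual curves on the cylinder ``read off the same crossing pattern as an arrangement of five pseudolines on the M\"obius strip obtained by quotienting the cylinder by antipodality.'' But dual support curves are not antipodally (anti-)symmetric---the dual of a disk is a constant function, and generically $h_A(\theta)\neq -h_A(\theta+\pi)$---so they do not descend to the M\"obius strip, and there is no reason why the cell complex $\cal C(\cal A)$ should be combinatorially a double cover of a wiring diagram. If that equivalence were automatic for orientable arrangements, the chirotope detour would be superfluous: the quotient would directly hand you a pseudoline arrangement, i.e.\ a generalized configuration realizing $\cal A$. Orientability guarantees that, \emph{for each triple separately}, each dual curve appears exactly once on the upper envelope in a well-defined cyclic order, which is precisely what $\chi$ records; passing from this triple-wise data to the five-point axiom requires a genuine combinatorial argument about how the upper envelopes of five pairwise twice-crossing curves fit together, and your sketch replaces that argument with an unjustified identification of the entire crossing pattern with that of a pseudoline arrangement. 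To be fair, the paper leaves the axiom verification implicit as well; but your proposal both flags the gap and then fills it with a claim that is, at best, unproven and plausibly stronger than what holds in general.
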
 

We now establish the converse of Lemma \ref{converserep}.

\begin{lemma}\label{realz} Every generalized configuration is
  realizable by an orientable arrangement.  \end{lemma}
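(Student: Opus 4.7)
The plan is to realize the dual wiring diagram of $\mathcal{P}$ as the dual system of an arrangement of convex bodies, via smoothing and rescaling, and then to argue that the resulting arrangement is orientable with the correct chirotope.

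Starting from a generalized configuration $\mathcal{P} = \{p_1, \ldots, p_n\}$, let $W$ be its dual wiring diagram. The wiring diagram records one half-period of the allowable sequence of $\mathcal{P}$ on a strip; concatenating with its antipodal continuation produces the full period and hence a system of $n$ closed curves on the cylinder $\mathbb{S}^1 \times \mathbb{R}^1$ in which each pair of wires crosses exactly twice (at antipodal values of $\theta$) and no three wires meet at a point. Parametrizing the horizontal direction by $\theta \in \mathbb{S}^1$, each wire is the graph of a continuous piecewise linear function $\gamma_i : \mathbb{S}^1 \to \mathbb{R}^1$. Next, smooth each $\gamma_i$ in a sufficiently small neighborhood of each of its kinks to a $C^2$ function $\tilde{\gamma}_i$, so that the order of all crossings and the cyclic order along the upper envelope of every subsystem of wires is preserved. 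Then define
\[
h_i(\theta) := R + \epsilon\, \tilde{\gamma}_i(\theta)
\]
for $\epsilon > 0$ small and $R$ so large that $h_i(\theta) + h_i''(\theta) = R + \epsilon(\tilde{\gamma}_i + \tilde{\gamma}_i'')(\theta) > 0$ holds for every $\theta$ and every $i$. Since $h + h'' \geq 0$ is the classical characterization of $C^2$ support functions on $\mathbb{S}^1$ (see Section 2.2 of \cite{groemer}), each $h_i$ is the support function of a unique convex body $A_i$; let $\mathcal{A} := \{A_1, \ldots, A_n\}$.

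The dual system $\mathcal{A}^*$ is an affine rescaling of the smoothed wires and therefore inherits their combinatorics. Each pair $A_i^*, A_j^*$ meets transversally in exactly two points, so each pair $A_i, A_j$ is non-crossing; and for every triple $\{i,j,k\}$, each of the three dual curves appears on the upper envelope of the triple (as is clear from any three wires), so the triple is orientable by the discussion at the start of Section \ref{orientable}. Moreover, the cyclic order in which these three dual curves appear on the upper envelope coincides with the cyclic order recorded by the wiring diagram, which via the Goodman--Pollack duality reviewed in Section \ref{generalizedC} equals the cyclic order around the convex hull of $\{p_i, p_j, p_k\}$ in $\mathcal{P}$. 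Thus the bijection $A_i \mapsto p_i$ preserves the chirotope of every triple, so $\mathcal{A}$ realizes $\mathcal{P}$. The main obstacle is neither the smoothing nor the support-function realization but this last combinatorial matching: identifying the cyclic order along the upper envelope of a triple of wires with the cyclic order of three points around their convex hull in $\mathcal{P}$, which requires a careful unpacking of the duality from Section \ref{generalizedC}.
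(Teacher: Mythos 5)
Your proof takes essentially the same route as the paper: dual wiring diagram, extend to its double cover on $\mb{S}^1\times\mb{R}^1$, smooth, and add a large constant so that the Blaschke criterion $h+h''>0$ makes each curve a support function. The combinatorial step you flag as the main obstacle is resolved in the paper by noting that the correspondence between upper-envelope cyclic order and the cyclic order of the points is a local statement about triples, and then verifying it for a single triple by writing out its allowable sequence $\dots,abc,bac,bca,cba,cab,acb,\dots$ and observing that the last entries of the permutations read off precisely the cyclic order of the points.
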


What is of importance to us is that the convex independencies can be
determined only from the set of cyclic orderings of the triples
\cite{OMS, goody, knuti}. Therefore Lemmas \ref{converserep} and
\ref{realz} imply that the Erd\H{o}s-Szekeres problems for generalized
configurations and for orientable arrangements are equivalent. This,
however, is not enough to prove Theorem \ref{equivES} since there
exist arrangements which are not orientable. Non-orientable
arrangements will be dealt with in the next section.

\begin{remark}
In view of Observations \ref{hullenvy} and \ref{generic} it should be
clear that for our purpose, the precise geometric information of the
dual system is not of major importance, but rather just the
intersection patterns of the dual curves which allow us to determine
the upper envelopes of any sub-system. In the figures below the dual
systems will be represented by ``schematic diagrams'' similar to
wiring diagrams. 
\end{remark}

\begin{proof}[Proof of Lemma \ref{realz}]
Let $\cal P$ be a generalized configuration. Consider its dual wiring
diagram $\cal W$ which encodes some half-period of the allowable
sequence of $\cal P$, as described in section \ref{generalizedC}. We
can view $\cal W$ as a system of curves drawn on the M\"{o}bius
strip, each pair crossing once with all crossing points
distinct. Extending $\cal W$ to its double cover, we obtain a system
of curves $\cal F$ on the cylinder $\mb{S}^1\times\mb{R}^1$, each pair
crossing twice, with all crossing points distinct.

We now notice that the cyclic ordering of a triple of $\cal P$ is
encoded in $\cal F$ by the order in which the corresponding triple of
curves appear on the upper envelope (of the triple) in the double
cover. To see why this happens it suffices to consider a triple
of points, so consider three points with cyclic order is
$(a,b,c)$. A full period of the corresponding sequence of ordered switches will be 
\[ab, ac, bc, ba, ca, cb\]
giving us the allowable sequence
\[\dots, abc, bac, bca, cba, cab, acb, abc, \dots\]
The wires that appear on the upper envelope of a full period of the
double cover of the corresponding wiring diagram are the last entries
of each permutation. See \fig \ref{triporder}.

\begin{figure}[h!]
  \centering
  
  \begin{tikzpicture}
    \begin{scope}[scale = .6]
      \begin{scope}[scale = .56] 
\draw (-1,3) -- (-1,-3);
\draw (-3, -2) -- (3,1);
\draw (-3,2) -- (3,-1);
\node at (3.6,1.2) {\ft $ab$};
 \node at (-3.6,-2.2) {\ft $ba$};
 
\node at (-3.6,2.2) {\ft $bc$};
 \node at (3.6,-1.2) {\ft $cb$};
 
\node at (-1,3.4) {\ft $ac$};
\node at (-1,-3.4) {\ft $ca$};

\node[left] at (-.8,-.7) {\ft $a$};
\node[below] at (1,0.1){\ft $b$};
\node[right] at (-1.2,1.2) {\ft $c$};
       \fill[blue] 
        (1,0) circle [radius = .12]
        (-1,1) circle [radius = .12] 
        (-1,-1) circle [radius = .12];
      \end{scope}
      \begin{scope}[xshift = 5.6cm, yshift = -.5cm, scale = .7] \draw
        (0,2) \ls\ls\ld\ld\ls\lu\lu\ls
        (0,1) \ls\ld\ls\lu\lu\ls\ld\ls
        (0,0) \ls\lu\lu\ls\ld\ld\ls\ls;
\node[left] at (0,2) {\ft $c$};
\node[left] at (0,1) {\ft $b$};
\node[left] at (0,0) {\ft $a$};
      \end{scope}
    \end{scope}
  \end{tikzpicture}

  \caption{\ft The cyclic ordering of each triple of points (right)
    corresponds to the order in which the wires appear on the upper
    envelope of the double cover of the wiring diagram (left).}
  \label{triporder}
\end{figure}
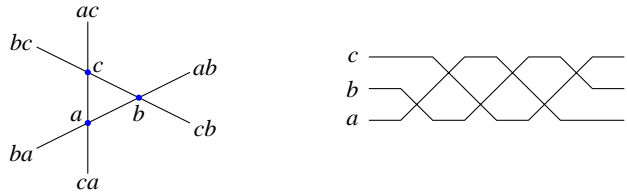

The curves of $\cal F$ can be approximated by the graph of a smooth
function $f : \mb{S}^1 \to \mb{R}^1$, and Blaschke showed that if we
interpret a smooth $2\pi$-periodic real function $f$ as the support
function of a planar curve in the plane, then the curvature at $t$ is
given by the expression $f(t)+f''(t)$ (see, for instance, Lemma 2.2.3 in
\cite{groemer}). Hence there exists a constant $c_0$ such
that $f + c$ is the support function of a body for any $c >
c_0$, and for a given system of smooth curves there is a common
constant we can add to each of the curves making $\cal F$ the dual
system of an orientable arrangement. The intersection patterns stay
invariant under this procedure and the result follows.
See \fig \ref{cover}.  
\end{proof}

\begin{figure}[h!]  
\centering
\begin{tikzpicture}[scale=.35]
\begin{scope}[xshift = -8cm]
\draw[dotted] 
(0,0) -- (1,-2) -- (2,3) -- (4,0) -- (7,-1) -- (0,0)
(0,0) -- (2,3) -- (7,-1) -- (1,-2) -- (4,0) -- (0,0);
\fill 
(0,0) circle [radius = .13cm];
\fill(1,-2) circle [radius = .13cm];
\fill(2,3) circle [radius = .13cm];
\fill(4,0) circle [radius = .13cm];
\fill[black](7,-1) circle [radius = .13cm];
\node [left] at (0,0) {\ft $a$};
\node [below] at (1,-2) {\ft $b$};
\node [above] at (2,3) {\ft $c$};
\node [above] at (4.2,0) {\ft $d$};
\node [below] at (7,-1) {\ft $e$};
\end{scope}

\begin{scope}[xshift = 10cm]
\draw
(0,2.5) \ld\ls\ls\ls\ld\ld\ls\ls\ld\ls\ls
(0,1.5) \lu\ls\ls\ls\ls\ls\ld\ld\ls\ld\ls
(0,.5)  \ls\ld\ld\ls\ls\ls\ls\ls\lu\lu\ls
(0,-.5) \ls\lu\ls\ld\ls\lu\ls\lu\ls\ls\ls
(0,-1.5)\ls\ls\lu\lu\lu\ls\lu\ls\ls\ls\ls;
\draw[<-](0,3.5) -- (0,-2.5);
\draw[<-](11,-2.5) -- (11,3.5);
\node[left] at (0,2.5) {\ft $a$};
\node[left] at (0,1.5) {\ft $b$};
\node[left] at (0,0.5) {\ft $c$};
\node[left] at (0,-.5) {\ft $d$};
\node[left] at (0,-1.5) {\ft $e$};
\node[right] at (11,2.5) {\ft $e$};
\node[right] at (11,1.5) {\ft $d$};
\node[right] at (11,0.5) {\ft $c$};
\node[right] at (11,-.5) {\ft $b$};
\node[right] at (11,-1.5) {\ft $a$};
\end{scope}

\begin{scope}[xshift=-4cm, yshift = -9cm]
\draw (0,2.5) 
\ds\de\ls\ls\ds\ld\de\ls\ds\de\ls
\us\ue\ls\ls\us\lu\ue\ls\us\ue\ls ;
\draw (0,1.5) 
\us\ue\ls\ls\ls\ls\ds\ld\de\ds\de 
\ds\de\ls\ls\ls\ls\us\lu\ue\us\ue ;
\draw (0,.5)  
\ls\ds\ld\de\ls\ls\ls\ls\us\lu\ue 
\ls\us\lu\ue\ls\ls\ls\ls\ds\ld\de ;
\draw (0,-.5) 
\ls\us\ue\ds\de\us\ue\us\ue\ls\ls
\ls\ds\de\us\ue\ds\de\ds\de\ls\ls ;
\draw (0,-1.5)
\ls\ls\us\lu\lu\ue\us\ue\ls\ls\ls
\ls\ls\ds\ld\ld\de\ds\de\ls\ls\ls ;
\draw[<-](0,3.5) -- (0,-2.5);
\draw[->](22,-2.5) -- (22,3.5);
\node[left] at (0,2.5) {\ft $a$};
\node[left] at (0,1.5) {\ft $b$};
\node[left] at (0,0.5) {\ft $c$};
\node[left] at (0,-.5) {\ft $d$};
\node[left] at (0,-1.5) {\ft $e$};
\node[right] at (22,2.5) {\ft $a$};
\node[right] at (22,1.5) {\ft $b$};
\node[right] at (22,0.5) {\ft $c$};
\node[right] at (22,-.5) {\ft $d$};
\node[right] at (22,-1.5) {\ft $e$};
\end{scope}
\end{tikzpicture}
\caption{\ft {\bf Top left}: A point configuration $\cal P$; {\bf Top
    right}: The dual wiring diagram $\cal W$ encoding the half-period
  of ordered switches
  $(21, 43, 53, 54, 51, 41, 52, 42, 31, 32)$; {\bf Bottom}: Extension
  to the double cover resulting in 
  the system $\cal F$ represented by smooth $2\pi$-periodic
  functions. Notice that the cyclic ordering of each triple of points
  is encoded by the order in which they appear on the upper envelope
  of the corresponding triple of curves.} 
\label{cover}
\end{figure}
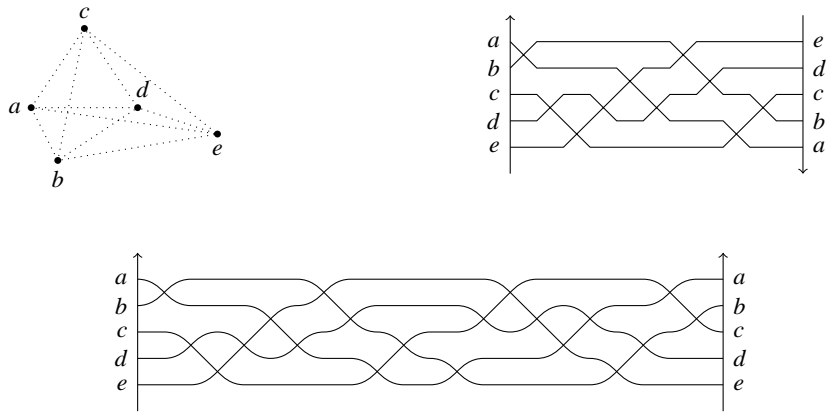

\subsection{Weak maps} Let $\cal A$ and $\cal B$ be arrangements with
$|{\cal A}| = |{\cal B}|$. A bijection $\phi: \cal A \to \cal B$ is a
\df{weak map} if $\phi^{-1}(\cal B')$ is convexly independent for
every convexly independent sub-arrangement $\cal B'\subset \cal
B$. The inequality $h_1(n)\leq g(n)$ is a consequence of the
following.  

\begin{lemma} \label{weak}
For every non-crossing arrangement $\cal A$, in which every triple is
convexly independent, there exists a weak map $\phi: \cal A \to \cal
B$ where $\cal B$ is an orientable arrangement.    
\end{lemma}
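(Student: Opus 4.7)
The plan is to transform $\cal A$ into an orientable arrangement ${\cal B}$ by a finite sequence of elementary local moves on the dual system ${\cal A}^*$ on the cylinder $\mb{S}^1 \times \mb{R}^1$, each preserving the non-crossing property and corresponding to an identity-on-labels weak map to the resulting arrangement. The desired $\phi$ will then be the composition of these weak maps.

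For the complexity measure, assign to each triple $T \subset {\cal A}$ the integer $k(T)$, defined as the number of maximal arcs on the upper envelope of the dual subsystem $T^*$. Because every triple of ${\cal A}$ is convexly independent, Observation \ref{hullenvy} forces each of the three curves to contribute at least one arc, so $k(T) \ge 3$, with equality exactly when $T$ is orientable. Set $\mu({\cal A}) := \sum_T (k(T) - 3) \in \mb{Z}_{\ge 0}$; vanishing of $\mu$ is equivalent to orientability of ${\cal A}$.

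When $\mu({\cal A}) > 0$, fix a non-orientable triple $T = \{A, B, C\}$. Since each pair of its dual curves crosses exactly twice and the upper envelope of $T^*$ has more than three arcs, the combinatorics of the six crossings force the existence of an innermost bounded triangular cell of ${\cal A}^*$ whose boundary consists of one arc from each of $A^*, B^*, C^*$. Flip this triangle in the sense of Ringel and Habert--Pocchiola: locally modify the three dual curves in a small neighborhood $R$ of the triangle so that the three pairwise crossings swap their vertical order, leaving the number of crossings of every pair unchanged. This yields a new non-crossing arrangement ${\cal A}'$. A case analysis according to how many of $A, B, C$ lie in an auxiliary triple $T'$ shows that the flip strictly decreases $k(T)$ without increasing $k(T')$ for any other triple, so $\mu({\cal A}') < \mu({\cal A})$.

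The main obstacle is verifying the weak-map property for this flip: I must show that if a sub-arrangement ${\cal S} \subseteq {\cal A}$ is not convexly independent, then it remains non-convexly-independent in ${\cal A}'$. By Observation \ref{hullenvy} it suffices, given any curve $D^* \in {\cal S}^*$ that lies strictly below the upper envelope of ${\cal S}^*$ at every angle, to prove $D^*$ still lies strictly below the upper envelope of ${\cal S}^*$ after the flip. Outside the support $R$ of the flip nothing changes; inside $R$ the crucial geometric point is that an innermost triangle flip keeps each of $A^*, B^*, C^*$ below the pre-flip pointwise supremum of $\{A^*, B^*, C^*\}$ on $R$, so the curve of ${\cal S}^*$ that dominated $D^*$ in $R$ before the flip continues to do so afterwards. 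Iterating drives $\mu$ to zero in at most $\mu({\cal A})$ steps, producing an orientable arrangement ${\cal B}$, and the composition of the identity weak maps at each step is the required $\phi: {\cal A} \to {\cal B}$.
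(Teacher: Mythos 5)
The overall framework you sketch---reducing the count of non-orientable triples by triangle flips on the dual system and composing weak maps---is exactly the paper's strategy (your $\mu$ is just the number of non-orientable triples, since $k(T)\in\{3,4\}$ for a convexly independent triple). But the crucial step is missing, and the claim you insert in its place is false.

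You assert that, having fixed a non-orientable triple $T=\{A,B,C\}$, ``the combinatorics of the six crossings force the existence of an innermost bounded triangular cell of ${\cal A}^*$ whose boundary consists of one arc from each of $A^*,B^*,C^*$.'' This is not true for an arbitrary non-orientable triple. The two triangular regions cut out by $A^*,B^*,C^*$ (the ``zones'' of $T^*$) are cells of ${\cal C}(T)$, but in the full complex ${\cal C}({\cal A})$ other dual curves may run through both of them, in which case \emph{no} cell of ${\cal C}({\cal A})$ bounded solely by $A^*,B^*,C^*$ exists, and there is nothing to flip for that particular $T$. What is actually needed---and what the paper devotes Lemma~\ref{pumping}, Claim~\ref{free zone}, and Observation~\ref{splitting mutants} to proving---is the much more delicate statement that \emph{some} non-orientable triple of ${\cal A}$ bounds an empty zone. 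The proof proceeds by a minimal-counterexample argument: one first reduces empty zones to free zones, then walks along the upper envelope inside a free zone using Observation~\ref{splitting mutants} to locate a smaller free zone, and finally analyzes the lower envelope to rule out the minimal counterexample. None of this is a routine consequence of the ``combinatorics of the six crossings''; it is the mathematical core of Lemma~\ref{weak}, and your proposal simply assumes it.

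A secondary imprecision: you justify the weak-map property by claiming the flip ``keeps each of $A^*,B^*,C^*$ below the pre-flip pointwise supremum of $\{A^*,B^*,C^*\}$ on $R$.'' Whether this holds depends on how the local isotopy realizing the flip is chosen---a generic flip raises $A^*$ and $C^*$ near the crossing that moves through the top edge---so this would need to be arranged explicitly (e.g., by pushing only the top curve $B^*$ downward through the bottom vertex while taking care not to create extra crossings with $A^*$ or $C^*$). The cleaner argument, implicit in the paper, is that since the flipped cell is empty, the flip alters the cyclic intersection pattern of no triple other than $T$, hence by Observation~\ref{locals} leaves the orientation of every other triple (and thus the convex-independence of every sub-arrangement not containing a now-orientable copy of $T$) unchanged, and turning a non-orientable triple orientable cannot create convex independence.
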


The dual arrangement $\cal A^*$ induces a cell complex $\cal C(\cal
A)$, homeomorphic to $\mb{S}^1\times [0,1]$, and the weak map $\phi$
will be defined in terms of elementary operations on $\cal C(\cal
A)$. Since every triple of $\cal A$ is convexly independent, there are
two types of triples to consider: The {\em orientable} and the {\em
  non-orientable} ones. The orientable triples were discussed in
section \ref{orientable}. The dual of a non-orientable triple $\cal T$
is characterized by one of its support curves appearing two distinct
times on the upper envelope of $\cal T^*$. In $\cal C(\cal T)$ this
corresponds to a pair of disjoint triangular cells whose top edges are
both contained in the same support curve. Notice that in the
non-orientable case these are the only triangular cells, while in the
orientable case every cell is triangular.  

An equivalent way of distinguishing the two types of triples is by
considering the cyclic order in which a curve intersects the other two
in the dual system. We call the cyclic sequence $(x,y,x,y)$
\df{alternating}, and the cyclic sequence $(x,x,y,y)$ \df{separating}.  

\begin{obs} \label{locals} Let $\cal T$ be a convexly independent
  triple of bodies with dual system ${\cal T}^*$. 
\begin{enumerate}
  \item If $\cal T$ is orientable, then for any $\gamma\in {\cal T}^*$
    the cyclic order in which $\gamma$ intersects the curves of ${\cal
      T}^*\setminus\{\gamma\}$ is alternating. 

\item If $\cal T$ is non-orientable, then for any $\gamma\in {\cal
    T}^*$ the cyclic order in which $\gamma$ intersects the curves of
  ${\cal T}^*\setminus\{\gamma\}$ is separating. 
\end{enumerate}
\end{obs}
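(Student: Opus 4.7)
The plan is to read off the cyclic ordering of crossings on each curve directly from the structure of the upper envelope of $\cal T^*$. Write $\alpha, \beta, \gamma$ for the three dual curves. By Observation~\ref{hullenvy} combined with convex independence of $\cal T$, each of $\alpha, \beta, \gamma$ appears on the upper envelope. Because the three bodies are pairwise non-crossing, each pair of dual curves meets transversally in exactly two points (Observation~\ref{generic}), for a total of six crossings. The two cases are distinguished, as indicated just before the observation, by whether each curve appears once on the upper envelope (orientable) or one curve appears twice (non-orientable).

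In the orientable case, fix the upper envelope cycle $(\alpha, \beta, \gamma)$, with transitions at $\theta_1 < \theta_2 < \theta_3$ (cyclically) where $\alpha=\beta$, $\beta=\gamma$, $\gamma=\alpha$, respectively. These three transitions account for three of the six crossings, one from each pair. To locate the other three I would apply the intermediate value theorem to pairwise differences: on the arc $(\theta_2, \theta_3)$ where $\gamma$ dominates, one has $\beta > \alpha$ at $\theta_2^+$ (since $\beta = \gamma > \alpha$) and $\alpha > \beta$ at $\theta_3^-$ (since $\alpha = \gamma > \beta$), so a second $\alpha\beta$ crossing must lie in $(\theta_2, \theta_3)$; the other two second crossings are placed symmetrically, inside the arcs where the third curve is on top. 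Reading along $\alpha$ in cyclic order then yields $(\beta, \gamma, \beta, \gamma)$, which is alternating, and the same conclusion holds for $\beta$ and $\gamma$ by the same argument.

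In the non-orientable case, one curve, say $\alpha$, appears twice on the upper envelope; fix the envelope cycle $(\alpha, \beta, \alpha, \gamma)$ with transitions $\theta_1 < \theta_2 < \theta_3 < \theta_4$ at which $\alpha$ meets $\beta, \beta, \gamma, \gamma$, respectively. These four transitions already exhaust the crossings of $\alpha$ with the other two curves and give the cyclic order $(\beta, \beta, \gamma, \gamma)$ on $\alpha$, which is separating. The two remaining $\beta\gamma$ crossings must therefore lie in the two envelope arcs where $\alpha$ is on top, namely $(\theta_2, \theta_3)$ and $(\theta_4, \theta_1)$: on the first, at $\theta_2^+$ we have $\beta = \alpha > \gamma$ and at $\theta_3^-$ we have $\gamma = \alpha > \beta$, so the intermediate value theorem supplies a $\beta\gamma$ crossing; the other arc is handled identically. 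Reading along $\beta$ cyclically gives $(\alpha, \alpha, \gamma, \gamma)$ up to rotation, which is separating, and $\gamma$ is symmetric.

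The main obstacle is simply the bookkeeping of signs across the four (or three) envelope arcs; once the envelope cycle is fixed, everything else is forced by the intermediate value theorem applied to the pairwise differences. No topology beyond continuity on $\mb{S}^1$ is needed, and the observation essentially reduces to the statement that knowledge of the cyclic type of the upper envelope determines the full pattern of how each curve crosses the other two.
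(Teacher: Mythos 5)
Your proof is correct and makes rigorous, via the intermediate value theorem applied to pairwise differences, the envelope-structure argument that the paper leaves entirely to Figure~\ref{Types}. The bookkeeping locating all six crossings (three or four at envelope transitions, the rest forced by sign changes on the arcs where a third curve dominates) is sound in both the orientable and non-orientable cases, and matches the paper's intent exactly.
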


To see why this holds we refer the reader to \fig \ref{Types}.

\begin{figure}[h!]
  \centering
\begin{tikzpicture}[scale=.36]
\begin{scope}

\begin{scope}
\fill[lightgray, opacity = .5]
(0,0) .. controls +(0:1) and +(50:1) .. 
 (2,-2) .. controls +(50:-1) and +(-5:1) ..
 (-1,-3) .. controls +(-5:-1) and +(0:-1) ..
 (0,0);
\draw[thick]
 (0,0) .. controls +(0:1) and +(50:1) .. 
 (2,-2) .. controls +(50:-1) and +(-5:1) ..
 (-1,-3) .. controls +(-5:-1) and +(0:-1) ..
 (0,0);
\end{scope}

\begin{scope}[yshift = 1cm]
\fill[lightgray, opacity = .5]
(3,0.5) .. controls +(0:1) and +(70:1) .. 
 (5,-1.5) .. controls +(70:-1) and +(-5:1) ..
 (2,-2.7) .. controls +(-5:-1) and +(80:-1) .. 
 (1,-1.5) .. controls +(80:1) and +(0:-1) ..
 (3,0.5);
\draw[thick]
 (3,0.5) .. controls +(0:1) and +(70:1) .. 
 (5,-1.5) .. controls +(70:-1) and +(-5:1) ..
 (2,-2.7) .. controls +(-5:-1) and +(80:-1) .. 
 (1,-1.5) .. controls +(80:1) and +(0:-1) ..
 (3,0.5);
\end{scope}

\begin{scope}
\path (-1,-3) ++(-5:5) coordinate (x);
\fill[lightgray, opacity = .5]
(5,0) .. controls +(0:2) and +(0:1) ..
 (x) .. controls +(0:-2) and +(0:-1) ..
 (5,0);
\draw[thick]
 (5,0) .. controls +(0:2) and +(0:1) ..
 (x) .. controls +(0:-2) and +(0:-1) ..
 (5,0);
\end{scope}

\begin{scope}
\node at (0,-1) {\ft $a$};
\node at (3,0) {\ft $b$};
\node at (4.5,-2.1) {\ft $c$};
\end{scope}

\end{scope}
\draw(-1.3,-6) \ds --++(1,-1) \de \us --++(1,1) \ue --++(1,0);
\draw(-1.3,-7) \us\ue\ds --++(1,-1)\de\us\ue;
\draw(-1.3,-8) --++(1,0) \us --++(1,1)\ue\ds --++(1,-1) \de;
\draw[->] (-1.3,-8.5)--++(0,3);
\draw[->] (5.7,-8.5)--++(0,3);

\node at (-1.9,-6) {\ft $a$};
\node at (-1.9,-7) {\ft $b$};
\node at (-1.9,-8) {\ft $c$};

\end{tikzpicture}
\hspace{2cm}
\begin{tikzpicture}[scale = .36]
  \begin{scope}

\begin{scope}
\fill[lightgray, opacity = .5]
(0,0) .. controls +(0:1) and +(50:1) .. 
 (2,-2) .. controls +(50:-1) and +(-5:1) ..
 (-1,-3) .. controls +(-5:-1) and +(0:-1) ..
 (0,0);
\draw[thick]
 (0,0) .. controls +(0:1) and +(50:1) .. 
 (2,-2) .. controls +(50:-1) and +(-5:1) ..
 (-1,-3) .. controls +(-5:-1) and +(0:-1) ..
 (0,0);
\end{scope}

\begin{scope}[xshift = -1.5cm, yshift = -1.8cm, scale = 1.2, rotate = 34]
\fill[lightgray, opacity = .5]
(3,0.5) .. controls +(0:1) and +(70:1) .. 
 (5,-1.5) .. controls +(70:-1) and +(-5:1) ..
 (2,-3.7) .. controls +(-5:-1) and +(80:-1) .. 
 (1,-1.5) .. controls +(80:1) and +(0:-1) ..
 (3,0.5);
\draw[thick]
 (3,0.5) .. controls +(0:1) and +(70:1) .. 
 (5,-1.5) .. controls +(70:-1) and +(-5:1) ..
 (2,-3.7) .. controls +(-5:-1) and +(80:-1) .. 
 (1,-1.5) .. controls +(80:1) and +(0:-1) ..
 (3,0.5);
\end{scope}

\begin{scope}
\path (-1,-3) ++(-5:5) coordinate (x);
\fill[lightgray, opacity = .5]
(5,0) .. controls +(0:2) and +(0:1) ..
 (x) .. controls +(0:-2) and +(0:-1) ..
 (5,0);
\draw[thick]
 (5,0) .. controls +(0:2) and +(0:1) ..
 (x) .. controls +(0:-2) and +(0:-1) ..
 (5,0);
\end{scope}

\node at (-.5,-1.75) {\ft $a$};
\node at (2.4,0) {\ft $b$};
\node at (5.2,-1.2) {\ft $c$};

\end{scope}
\draw(-1.3,-6) \ds --++(1,-1) \de --++(1,0) \us --++(1,1) \ue;
\draw(-1.3,-7) \us \ue \ds \du \ue\ds \de;
\draw(-1.3,-8) --++(1,0) \us --++(1,1) \ud --++(1,-1) \de --++(1,0);
\draw[->] (-1.3,-8.5)--++(0,3);
\draw[->] (5.7,-8.5)--++(0,3);

\node at (-2,-6) {\ft $a$};
\node at (-2,-7) {\ft $b$};
\node at (-2,-8) {\ft $c$};

\end{tikzpicture}
\caption{\ft The two types of triples of bodies, orientable (left) and
  non-orientable (right). Notice that the cyclic order in which we
  meet the bodies when traversing the convex hull corresponds to the
  order in which we meet the curves when transversing the upper
  envelope. In the orientable case (left) any curve meets the others
  alternatingly, for instance, the cyclic order in which $\alpha$
  meets $\beta$ and $\gamma$ is $(\gamma,\beta,\gamma,\beta)$. In the
  non-orientable case $\alpha$ meets $\beta$ and $\gamma$ in the
  cyclic order $(\beta, \gamma, \gamma, \beta)$.} 
\label{Types}
\end{figure}
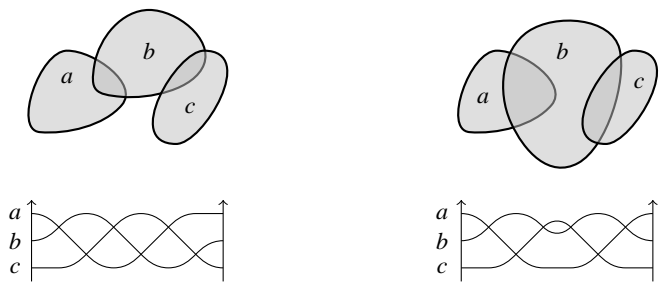

A non-orientable triple $\cal T$ is related to an orientable one by an
elementary operation called a \df{triangle flip}, which is defined by
``flipping'' the orientation of one of the two triangular cells of
$\cal C(\cal T)$. Notice that a triangle flip defines a weak map from
a non-orientable triple to an orientable one. See \fig \ref{pump1}. 

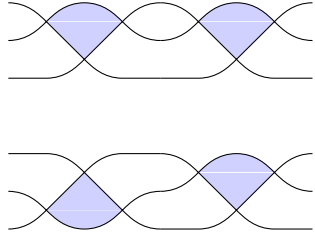
\begin{figure}[h!]
\begin{center}
\begin{tikzpicture}[scale =.5]

\begin{scope}[yshift = 3cm]
\fill[blue!60, opacity =.3]
(1,2.5)\ue\ds
(1,2.5)\ld\lu;
\draw
(0,3) \ds\ld \de \ls
(0,2) \us\ue\ds\de
(0,1) \ls\us \lu\ue ;
\end{scope}

\begin{scope}[yshift = 3cm, xshift = 4cm]
\fill[blue!60, opacity =.3]
(1,2.5)\ue\ds
(1,2.5)\ld\lu;
\draw
(0,3) \ds\ld \de \ls
(0,2) \us\ue\ds\de
(0,1) \ls\us \lu\ue ;
\end{scope}

\begin{scope}[yshift = -1cm, xshift = 4cm]
\fill[blue!60, opacity =.3]
(1,2.5)\ue\ds
(1,2.5)\ld\lu;
\draw
(0,3) \ds\ld \de \ls
(0,2) \us\ue\ds\de
(0,1) \ls\us \lu\ue ;
\end{scope}

\begin{scope}[yshift = -1cm] 
\fill[blue!60, opacity =.3]
(1,1.5)\lu\ld
(1,1.5)\de\us;
\draw
(0,1) \us\lu \ue \ls
(0,2) \ds\de\us\ue
(0,3) \ls\ds \ld\de ;
\end{scope}

\end{tikzpicture}
\caption{\ft A non-orientable triple (above) and the orientable triple
  (below) obtained after applying a triangle flip.} 
\label{pump1}
\end{center}
\end{figure}

We deduce Lemma \ref{weak} from the following.

\begin{lemma} \label{pumping} If $\cal A$ is not orientable, then
  $\cal C(\cal A)$ contains a triangular cell bounded by the support
  curves of a non-orientable triple. \end{lemma}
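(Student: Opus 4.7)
The plan is a minimization argument. Since $\cal A$ is not orientable it contains a non-orientable triple $\cal T=\{A,B,C\}$; by Observation \ref{locals} one of the three curves (say $A^*$) appears twice on the upper envelope of $\cal T^*$, and $\cal C(\cal T)$ has exactly two triangular cells $\Delta_1,\Delta_2$, each with an arc of $A^*$ as its ``top'' edge. If no curve of $\cal A^*\setminus \cal T^*$ enters the interior of either $\Delta_i$, then $\Delta_i$ is already a triangular cell of $\cal C(\cal A)$ bounded by the non-orientable triple $\cal T$, and we are done.

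In general I argue by contradiction. Consider all pairs $(\cal T',\Delta')$ in which $\cal T'\subseteq \cal A$ is a non-orientable triple and $\Delta'$ is one of the two triangular cells of $\cal C(\cal T')$, and choose $(\cal T',\Delta')$ so as to minimize the number of cells of $\cal C(\cal A)$ contained in $\Delta'$. Assume the minimum is at least $2$. Then some curve $D^*\in \cal A^*\setminus \cal T'^*$ enters $\Delta'$, so I may pick an arc of $D^*\cap \Delta'$ that, together with a portion of $\partial\Delta'$, cuts off an innermost sub-region $R\subsetneq \Delta'$ whose interior is crossed by no other curve of $\cal A^*\setminus \cal T'^*$.

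A short case analysis on which of the three sides of $\Delta'$ the two endpoints of this innermost arc lie on identifies a sub-triple $\cal T''\subseteq \cal T'\cup\{D\}$ whose dual curves bound a triangular sub-region $\Delta''\subseteq R$. Using the separating cyclic pattern of $\cal T'$ together with the fact that $A^*$ lies above $R$, one verifies that $\cal T''$ is again non-orientable and that $\Delta''$ is a triangular cell of $\cal C(\cal T'')$. Since $\Delta''\subsetneq \Delta'$ and $R$ is crossed by no other curves of $\cal A^*\setminus \cal T'^*$, the pair $(\cal T'',\Delta'')$ strictly reduces the minimized quantity, contradicting the choice of $(\cal T',\Delta')$.

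The main difficulty will be the case analysis: one must confirm in each sub-case that the separating cyclic pattern characterizing non-orientability (Observation \ref{locals}) is preserved in passing from $\cal T'$ to $\cal T''$, and that $D^*$ does not turn the sub-triple into an orientable one. The delicate case is when the innermost arc of $D^*$ connects the two lower sides of $\Delta'$, cutting off a region disjoint from the top arc of $A^*$; there one either pivots to the other triangular cell of $\cal C(\cal T')$, or replaces $A$ by $D$ as the new special curve, exploiting the fact that throughout $R$ the curve $D^*$ lies below $A^*$.
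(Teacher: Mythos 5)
The paper's proof proceeds in two stages: first a reduction (Claim~\ref{free zone}) that a \emph{free} zone (one whose top edge is crossed by no curve) forces an \emph{empty} zone, and then a minimal-counterexample argument on $|\cal A|$ that analyzes the lower envelope and, via Observation~\ref{splitting mutants}, produces a free zone. Your proposal instead attempts a purely local minimization over pairs (non-orientable triple, zone), repeatedly cutting off an innermost region $R\subsetneq\Delta'$ with one further curve $D^*$. These are genuinely different strategies, and yours has a gap at exactly the point the paper's ``free zone'' machinery is built to avoid.

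The problematic case is not the one you flag as delicate (arc of $D^*$ joining the two lower sides of $\Delta'$), but rather the case you pass over quickly: the innermost arc of $D^*$ joins the \emph{top} edge (on $A^*$) to a lower side (say $C^*$). Then $R$ is a triangular cell of $\cal C(\cal A)$ bounded by arcs of $A^*$, $C^*$, $D^*$, and your argument needs $\{A,C,D\}$ to be non-orientable so that $R$ is a zone. But whether $D^*$ meets $A^*$ and $C^*$ in a separating or an alternating cyclic pattern depends entirely on what $D^*$ does \emph{outside} $\Delta'$ after it crosses $A^*$; the separating pattern of the original triple $\cal T'$ and the local geometry of $R$ do not control it. For instance, if after leaving $\Delta'$ through $A^*$ the next curve among $A^*,C^*$ that $D^*$ meets is $C^*$, then the pattern is alternating and $\{A,C,D\}$ is orientable, so $R$ is a triangular cell but \emph{not} a zone, and no sub-triple of $\cal T'\cup\{D\}$ bounds a zone inside $R$. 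In such a configuration a non-orientable triple containing $D$ does exist (e.g. $\{D,B,C\}$ when $D^*$ then crosses $B^*,A^*,B^*$), but as the paper's Observation~\ref{splitting mutants} and Figure~\ref{split zones} illustrate, the zone it bounds typically lies partly or entirely \emph{outside} $\Delta'$, so there is no reason it contains fewer cells of $\cal C(\cal A)$ than $\Delta'$ does. Your minimized quantity therefore need not decrease, and the induction does not close.

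This is exactly why the paper isolates the notion of a \emph{free} zone and proves Claim~\ref{free zone} first: when no curve crosses the top edge, every intruding curve must enter and exit through the two lower sides (item (1) in the proof of the Claim), and the argument you are attempting does work. The remaining step in the paper --- a minimal counterexample on $|\cal A|$ together with an analysis of the lower envelope --- is then used to manufacture a free zone, and it is not of a local nature. To repair your argument you would either need to re-introduce something like the free-zone reduction, or replace the minimization quantity by one that is guaranteed to decrease even when Observation~\ref{splitting mutants} sends you outside $\Delta'$; as written, neither is in place.
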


\begin{proof}[Proof of Lemma \ref{weak}] If $\cal A$ is not
  orientable, Lemma \ref{pumping} implies that we can apply a triangle
  flip to $\cal C(\cal A)$ obtaining a new cell complex $\cal
  C'$. Clearly we may assume $\cal C'$ is induced by a system of
  smooth curves, which is therefore the dual system of an arrangement
  $\cal A'$ (as in the proof of Lemma \ref{realz}). This induces a
  weak map $\phi' :\cal A \to \cal A'$. Since a triangle flip reduces
  the number of non-orientable triples, Lemma \ref{weak} follows by
  induction. \end{proof}  

\bigskip

A few technical terms are needed for proving Lemma \ref{pumping}. Let
$\cal T$ be a non-orientable triple. The \df{top edges} of the two
triangular cells of $\cal C(\cal T)$ belong  to the same support
curve, called the \df{top curve}, which appears twice on the upper
envelope of $\cal T^*$. When $\cal T$ belongs to a larger system $\cal
F$, the triangular cells of $\cal C(\cal T)$ may no longer be cells in
$\cal C(\cal F)$, so instead we refer to these open triangular regions
as the \df{zones} of $\cal T^*$. When we say that $\cal T^*$ bounds a
zone, it is implicit that $\cal T$ is non-orientable. A zone is called
\df{empty} if no curve of the system intersects its interior, and is
called \df{free} if no curve intersects its top edge. See \fig
\ref{fig:zones}.  

\begin{figure}[h!]
\centering
\begin{tikzpicture}
\begin{scope}[scale = .5]
\fill[blue!40, opacity= .4] 
(3, 4.5) \ue\ls\ls\ds --cycle 
(3, 4.5) \ld\ld\lu\lu --cycle;
\fill[blue!40, opacity= .4] 
(10, 3.5) \ue\ds\ld --cycle 
(10, 3.5) \ld\ld\lu --cycle;
\node [left] at (0,4) {\ft $c$};
\node [left] at (0,2) {\ft $b$};
\node [left] at (0,1) {\ft $a$};
\draw (0,5) 
\ds \ld \ld \ld \de \us \ue \ls \ls \ls \us \lu \lu \ue \ls;
\draw[blue] (0,4) 
\us \ue \ds \ld \ld \ld \de \ls \ls \ls \ls \us \lu \lu \ue;
\draw (0,3) 
\ds \de \us \lu \ue \ds \de \us \lu \ue \ls \ls \ds \ld \de;
\draw[purple] (0,2) 
\us \lu \lu \ue \ls \ls \ds \ld \de \us \ue \ds \ld \de \ls;   
\draw[blue] (0,1) 
\ls \ls \ls \us \lu \lu \lu \ue \ds \ld \ld \ld \de \ls \ls; 
\end{scope}
\end{tikzpicture}
\caption{\ft The triple $\cal T^* = \{a, b, c\}$ bounds two zones
  (shaded) and the top curve is $b$
    (red). Neither of the zones of $\cal T^*$ are empty, but the left
    one is free.}
\label{fig:zones}
\end{figure}
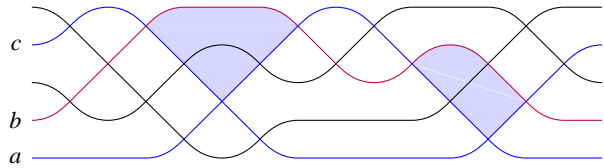

Our goal is therefore to show: \begin{center}{\em If $\cal A$ contains
    non-orientable triples, then $\cal C(\cal A)$ contains an empty
    zone.} \end{center} 

For the proof we will consider a minimal counter-example. It is,
however, easier to handle free zones rather than empty ones, so
we first establish the following.  

\begin{claim} \label{free zone}
If $\cal C(\cal A)$ contains a free zone, then $\cal C(\cal A)$
contains an empty zone.  
\end{claim}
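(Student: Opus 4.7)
The plan is to argue by induction on the number of curves of $\cal A^*$ that cross the interior of the free zone $Z$. If no such curve exists, then $Z$ itself is empty and the claim is immediate. Otherwise, let $Z$ be the free zone of a non-orientable triple $(a,b,c)$ with top curve $b$ and bottom vertex $w=a\cap c$. By freeness, every curve $d$ entering the interior of $Z$ must both enter and exit through the two side arcs (those on $a$ and $c$). My aim is to produce a strictly smaller free zone $Z'\subset Z$ with at least one fewer crossing curve, to which the inductive hypothesis then applies.

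The construction of $Z'$ goes through \emph{chords} of $Z$, meaning arcs of some $d\in \cal A^*$ inside $Z$ with one endpoint on the $a$-arc and the other on the $c$-arc, so that the chord separates $w$ from the top edge. In the main case at least one chord exists, and I choose one that is innermost with respect to $w$: the triangular sub-region $Z'$ bounded by the chord together with the sub-arcs of $a$ and $c$ running from the chord's endpoints down to $w$ is inclusion-minimal among all such regions. This choice makes $Z'$ automatically free, because any curve crossing the top of $Z'$ (the chord of $d$) would yield a chord strictly closer to $w$, contradicting minimality.

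The main obstacle is to verify that $Z'$ is actually a zone, i.e.\ that some non-orientable triple of $\cal A$ has $Z'$ as one of its zones. The natural candidate is the triple $(a,d,c)$ with $d$ as top curve, and by Observation~\ref{locals} this amounts to showing that the cyclic order on $d$ of its four intersections with $a\cup c$ is separating rather than alternating. Since the two intersections of $d$ lying outside $Z$ could, a priori, be placed so that this order is alternating, I would leverage the non-orientability of $(a,b,c)$ together with the fact that $d$ lies below $b$ throughout the $\theta$-range of $Z$ to constrain the positions of these outside intersections, and when $(a,d,c)$ is nevertheless orientable I would fall back on an auxiliary non-orientable triple involving $b$, such as $(a,b,d)$ or $(b,c,d)$, whose zone still nests inside $Z$. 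A short separate argument covers the degenerate case where no chord of $Z$ exists --- so that every crossing curve enters and exits through the same side arc --- by working with an innermost lens cut off by such a curve with $a$ or $c$ and reducing back to the main case. Combining these cases with the inductive hypothesis produces the required empty zone.
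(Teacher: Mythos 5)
The proposal has a genuine gap at the step claiming the innermost chord region $Z'$ is automatically free. Distinct chords of $Z$ can cross each other inside $Z$ (the paper's observation~(3) in the proof of Claim~\ref{free zone} shows they cross \emph{at most once} inside $Z$, but once is allowed). If chords $d$ and $e$ cross once inside $Z$, then the two triangular regions they cut off near $w$ are incomparable under inclusion: say $e$ enters through $c$ below $d$'s entry point; then, since $e$ and $d$ cross only once inside $Z$, $e$ must exit through $a$ \emph{above} $d$'s exit, so $Z'_e$ has a shorter $c$-side but a longer $a$-side than $Z'_d$. Both are then inclusion-minimal, yet each chord crosses the other region's top edge, so neither is free. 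Thus ``inclusion-minimal'' does not imply ``free'', and the induction does not get off the ground.

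The paper circumvents exactly this by not committing to a single innermost chord. Instead it walks along the upper envelope of $\{a,c,w_1,\dots,w_k\}$ inside $Z$ starting from the corner $b\cap c$. At each crossing $w_j\cap w_{j-1}$ encountered along the envelope, Observation~\ref{splitting mutants} guarantees that this crossing is the apex of a new zone, which either nests inside the current one (in which case it is free and you are done) or hangs below it (in which case you continue along $w_{j-1}$). Since the $w_i$ cross pairwise at most once inside $Z$, each curve appears on this envelope at most once, so the walk terminates with a free zone on strictly fewer curves. The missing ingredient in your sketch is this ``splitting'' observation, which is what turns a crossing of a top edge into the apex of a new zone and keeps the recursion alive.

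A smaller point: the ``degenerate case'' you set aside, where a curve enters and exits $Z$ through the same side arc, is actually vacuous under the standing hypothesis that every triple of $\cal A$ is convexly independent. If $w_i$ crossed $c$ and then immediately $c$ again, only $b$ and $c$ would appear on the upper envelope of $\{b,c,w_i\}$, contradicting convex independence (this is the paper's observation~(1) in the proof). So every curve meeting the interior of a free zone is already a chord, and you should lean on that fact rather than handle a separate lens case.
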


\begin{proof} We assume without loss of generality that $Z_0$ is a
  free zone bounded by $a,b,c$ where $b$ is the top curve. Let $w_1,
  \dots, w_k$ denote curves that intersect $Z_0$. We first make some
  simple observations (these will also be of use later): 

\begin{enumerate}

\item {\em Each $w_i$ intersects $Z_0$ in a single connected arc. We
    may assume $w_i$ enters $Z_0$ by crossing curve $c$ and exits
    $Z_0$ by crossing curve $a$.} 

To see why this holds assume for contradiction that $w_i$ intersects
$Z_0$ in more than one connected arc. Up to symmetry we may then
assume $w_i$ enters $Z_0$ by crossing $c$ and then immediately crosses
$c$ again. In this case, whenever $w_i$ is above curve $c$, it is also
below curve $b$, and consequently only $b$ and $c$ appear on the upper
envelope of the triple $\{b,c,w_i\}$, which contradicts our assumption
that every triple of $\cal A$ is convexly independent.  

\item {\em The triangular region in $Z_0$ bounded by $a$, $w_i$, $c$ is a zone.}

If the region in question is not a zone, then $w_i$ should meet the
curves $a$ and $c$ alternatingly. Therefore after $w_i$ exists $Z_0$
by crossing $a$ it should meet curve $c$ before it meets curve $a$
again. Arguing as in (1), this would imply that only curves $b$ and
$c$ appear on the upper envelope of the triple $\{b,c,w_i\}$. 

\item {\em Distinct curves $w_i$ and $w_j$ cross at most {\em once}
    inside $Z_0$.}  

Suppose $w_i$ and $w_j$ are distinct curves which intersect $Z_0$ and
they cross twice inside $Z_0$. We may assume the $w_i$ enters $Z_0$
above $w_j$, which implies that $w_i$ also exists $Z_0$ above $w_j$,
and therefore the only time $w_j$ is above $w_i$ it is also below
$b$. Consequently, the upper envelope of $\{b, w_i, w_j\}$ consists
only of curves $b$ and $w_i$. 

\end{enumerate}

Of course, the zones appearing inside $Z_0$ are not necessarily free, so
we also need the following observation concerning zones that are not free.

\begin{obs} \label{splitting mutants}
Let $Z$ be a zone bounded by $a, b, c$ where $b$ is the top
curve. Suppose $w$ enters $Z$ by crossing $c$ and exits $Z$ by
crossing $b$, then proceeds to cross $a$. Then one of the triples $a,
w, b$ or $w,b,c$ bound a zone with top vertex at the crossing point
between $b$ and $w$ on the top edge of $Z$. 
\end{obs}

To see why this holds we notice that after $w$ leaves $Z$ and crosses
$a$, it enters a digon bounded by curves $a$ and $b$, which means that
the next curve that $w$ crosses must be one of these two. If $w$ exits
the digon by crossing curve $a$, then $w$ crosses curves $a$ and $b$
in a separating cyclic sequence, which implies that the triple $a$,
$w$, $b$ is non-orientable, and they therefore bound a
zone. Otherwise, $w$ exists the digon by crossing curve $b$, which
implies that $w$ crosses curves $b$ and $c$ in a separating cyclic
sequence, and consequently $w,b,c$ is non-orientable, and they bound a
zone. See \fig \ref{split zones}. 

\begin{figure}[h!]
\begin{center}
   \begin{tikzpicture}

\begin{scope}[scale = .4, yshift = 7cm]
\fill[blue!40, opacity= .4] 
(2, 3.5) \lu\ue\ds\ld --cycle 
(2, 3.5) \ld\ld\lu\lu --cycle;
\node [left] at (0,5) {\ft $c$};
\node [left] at (0,3) {\ft $b$};
\node [left, red] at (0,2) {\ft $w$};
\node [left] at (0,1) {\ft $a$};
\draw(0,5) 
\ds \ld \ld \ld \de \ls \ls \ls \ls \ls \ls;
\draw (0,3) 
\ls \us \lu \ue \ds \ld \ld \de \us \lu \ue;
\draw[red,->, >=latex] (0,2) 
\ls \ls \us \lu \lu \ue \ds --++ (.5,-.5);   
\draw(0,1) 
\ls \ls \ls \us \lu \lu \lu \ue \ds \ld \de; 
\end{scope}

\begin{scope}[xshift=3cm, scale=.4]
\fill[blue!40, opacity= .4] 
(2, 3.5) \lu\ue\ds --cycle 
(2, 3.5) \ld\lu\lu --cycle;
\node [left] at (0,5) {\ft $c$};
\node [left] at (0,3) {\ft $b$};
\node [left, red] at (0,2) {\ft $w$};
\node [left] at (0,1) {\ft $a$};
\draw(0,5)
\ds \ld \ld \ld \de \ls \ls \ls \ls \ls \ls;
\draw (0,3) 
\ls \us \lu \ue \ds \ld \ld \de \us \lu \ue;
\draw[red, ->, >=latex] (0,2) 
\ls \ls \us \lu \lu \ue \ds \ld \ld \de ;   
\draw(0,1) 
\ls \ls \ls \us \lu \lu \lu \ue \ds \ld \de; 
\end{scope}

\begin{scope}[xshift=-3cm, scale=.4]
\fill[blue!40, opacity= .4] 
(5, 4.5) \ue\ds --cycle 
(5, 4.5) \ld\lu --cycle;
\node [left] at (0,5) {\ft $c$};
\node [left] at (0,3) {\ft $b$};
\node [left, red] at (0,2) {\ft $w$};
\node [left] at (0,1) {\ft $a$};
\draw(0,5) 
\ds \ld \ld \ld \de \ls \ls \ls \ls \ls \ls;
\draw (0,3)  
\ls \us \lu \ue \ds \ld \ld \de \us \lu \ue;
\draw[red, ->, >=latex] (0,2) 
\ls \ls \us \lu \lu \ue \ds \de \us \ue;   
\draw(0,1)
\ls \ls \ls \us \lu \lu \lu \ue \ds \ld \de; 
\end{scope}

\end{tikzpicture}
\caption{\ft {\bf Top:} The zone $Z$ is bounded by $a,b,c$ (shaded).
After $w$ leaves $Z$ and crosses $a$ it
enters a digon bounded by curves $a$ and $b$, so it must cross one of
them again before crossing $c$. {\bf Bottom left:} If the
next crossing of $w$ is with $a$, then  $a, w, b$ bound a zone (shaded). {\bf
  Bottom right:} If the next crossing of $w$ is with 
$b$ then $w,b,c$ bound a zone (shaded).}
\label{split zones}
\end{center}
\end{figure}
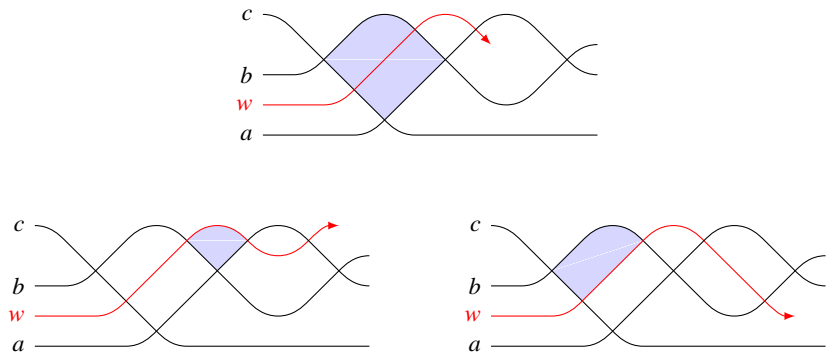

For the proof of Claim \ref{free zone} we proceed by induction on $k$,
the number of curves which intersect $Z_0$. If $k=0$, then $Z_0$ is an
empty zone, so assume $k>0$. Start at the top left corner of $Z_0$ at
the crossing between $b$ and $c$. Move on the boundary of $Z_0$ along
$c$ and stop at the first crossing we encounter. Assume that this is
the crossing between $c$ and $w_k$. This crossing is the top corner of
a zone $Z_k \subset Z_0$ (bounded by $a, w_k,c$ by (2) above). Move
into the interior of $Z_0$ along curve $w_k$ (the top edge of $Z_k$)
and stop at the first crossing we encounter. If this is the crossing
between $w_k$ and $a$, we may apply the induction step, since then
$Z_k$ is a free zone with less than $k$ intersecting curves. So assume
that this is a crossing between $w_k$ and $w_{k-1}$. By Observation
\ref{splitting mutants} (with $b = w_k$ and $w = w_{k-1}$) this
crossing is the top vertex of a zone $Z_{k-1}$. If $Z_{k-1}\subset
Z_k$ (i.e bounded by $w_{k-1}, w_k, c$), then $Z_{k-1}$ is free and we
are done by induction, so assume $Z_{k-1}$ is bounded by curves $a$,
$w_{k-1}$, $w_k$. Now proceed along curve $w_{k-1}$ (the top edge of
$Z_{k-1}$) and repeat the process. In general, we proceed from the
left top vertex of the zone $Z_j$ along the curve $w_{j}$ (the top
edge of $Z_j$) and stop at the first curve we meet. If the first
curve we meet is $a$, then we are done because $Z_j$ is a free zone
intersected by fewer than $k$ curves. Otherwise we meet curve
$w_{j-1}$, and by Observation \ref{splitting mutants} this crossing
point is the top vertex of a zone $Z_{j-1}$. If $Z_{j-1}\subset Z_j$
then we are done, or else we can repeat by proceeding along $w_{j-1}$
(the top edge of $Z_j$). This process simply amounts to moving along
the upper envelope of the curves $\{a, c, w_1, \dots, w_k\}$ within
the zone $Z$. By (3) above, each pair $w_i$ and $w_j$ cross at most
once within $Z$, so each curve can appear on the upper envelope at
most once (inside $Z$). Since there are only finitely many curves
$w_i$ the process must eventually end, either with a free zone $Z_i
\subset Z_{i+1}$, or with the first curve which $w_i$ meets after
appearing on the upper envelope being $a$, in which case $w_{i+1},
w_i, a$ bound a free zone $Z_i \subset Z$. In either case we
eventually reach a free zone which is crossed by less than $k$ curves,
completing the proof of Claim \ref{free zone}. See \fig \ref{find
  zone}. \end{proof}  

\begin{figure}[h!]
\centering
\begin{tikzpicture}
\begin{scope}[scale =.45, yscale=1.2]
\node [left] at (0,7) {\ft $c$};
\node [left] at (0,6) {\ft $b$};
\node [left] at (0,5) {\ft $w_k$};
\node [left] at (0,3) {\ft $w_{k-1}$};
\node [left] at (0,1) {\ft $a$};
\fill[blue!30, opacity= .4] 
(1, 6.5) \ue \ls \ls \ls \ls \ls \ls \ls \ls\ds  --cycle 
(1, 6.5) \ld\ld \ld \ld \ld \lu\lu\lu \lu \lu--cycle;
\fill[blue!60, opacity= .4] 
(2, 5.5) \ue \ls \ls \ds  --cycle 
(2, 5.5) \ld\ld \lu\lu --cycle
(6, 5.5) \ue \ls \ls \ds  --cycle 
(6, 5.5) \de \ls \ds \lu --cycle;
\draw(0,7) 
\ds\ld\ld\ld\ld \ld \de \ls\ls\ls\ls \ls;
\draw(0,6)
\us \ue \ls\ls\ls \ls \ls \ls \ls \ls \ds \de;
\draw(0,5)
\ls\us \ue \ls\ls \ds\de \ls \ds \de ;
\draw(0,4.25)
\ls \ls \us \ue \ds --++ (.5,-.5);
\draw(0,4)
\ls \ls \us \ue \ds --++ (.4,-.4);
\draw(0,3.75)
\ls \ls \us \ue \ds --++ (.3,-.3);
\draw(0,3)
\ls \ls \ls \us \lu \lu \ue \ls \ls \ds \de;
\draw(0,2.25)
\ls \ls \ls \ls \us --++ (.3,.3);
\draw(0,2)
\ls \ls \ls \ls \us --++ (.4,.4);
\draw(0,1.75)
\ls \ls \ls \ls \us --++ (.5,.5);
\draw(0,1)
\ls \ls \ls \ls \ls \us \lu \lu \lu \lu \lu \ue;
\draw[red, thick, ->, >=latex] (1,6.5)\ld \ue \ls \ls \ds ;
\fill[red] (1,6.5) circle [radius = 0.08];
\end{scope}
\end{tikzpicture}
\caption{\ft Starting at top left corner of $Z_0$ (light shade) move along the
  boundary until we meet the first crossing. This is the top corner
  of a zone bounded by $a, w_k, c$. Proceed along $w_k$
  until we meet the next crossing. By Observation \ref{splitting
    mutants} one of the two dark shaded regions must be a zone.}
\label{find zone}
\end{figure}
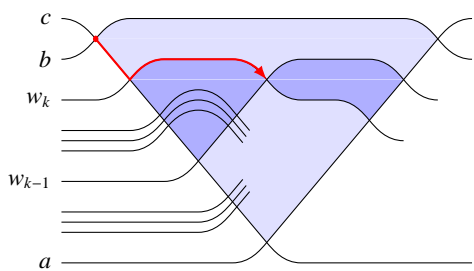

We are in position to complete the proof of Lemma \ref{pumping}.

\begin{proof}[Proof of Lemma \ref{pumping}] Suppose $\cal A$ is a
  minimal counter-example. Then $\cal C(\cal A)$ contains zones, but
  no empty ones, and any proper sub-arrangement $\cal A'\subset
  \cal A$ is either orientable or the complex $\cal C(\cal A')$ has at
  least one empty zone. We will reach a contradiction by showing that
  $\cal C(\cal A)$ contains a {\em free zone}.  

Assume first that {\em any} curve we delete from the {\em lower
  envelope} of $\cal A^*$ (defined as the graph of the function
$\min_i \gamma_i$) 
destroys all non-orientable triples. Then the lower envelope consists
of exactly {\em two} curves $a$ and $c$, and a triple
is non-orientable if and only if it includes both of these curves. To
see this, note that if there were three curves on the lower envelope,
then these form an orientable triple, so for any non-orientable
triple there is a curve on the lower envelope not belonging to it.
Let $Z$ be a zone bounded by $a$, $b$, $c$. Some curve $w$ should
intersect the top edge of $Z$ or else it is free, in which case $a$,
$b$, $w$ or $b$, $c$, $w$ is non-orientable, contradicting our
initial assumption. 

We may therefore assume that there is a curve $w$ appearing on the lower
envelope, and a triple $a,b,c$ which bound a zone $Z$ where $b$ is the
top curve, and $w$ is the {\em only} curve which meets the interior of
$Z$. Furthermore $w$ must cross the top edge of $Z$ (if not $Z$ is
free, contradicting the assumption that $\cal A$ was a counter-example). 
Now we use the fact that $w$ was on
the lower envelope. This implies that $w$ must also cross one of the
other edges of $Z$. Up to symmetry there are then two cases that can
occur (see \fig \ref{mini-count}).

\begin{enumerate}
\item $w$ is on the lower envelope, crosses $a$, then $c$ (entering
  $Z$), and then $b$ (leaving $Z$). 
\item $w$ is on the lower envelope, crosses $c$, then $a$ (entering
  $Z$), and then $b$ (leaving $Z$). 
\end{enumerate}

In both cases we consider the order in which $w$ intersects the other
curves after leaving $Z$. 

In case (1) it results in a zone bounded by $w, b, c$ contained in
$Z$. This must be empty, since $w$ is the only curve that intersects
$Z$. We use Observation \ref{locals} to argue that $w, b, c$ come from
a non-orientable triple. Notice that after $w$ leaves $Z$ by crossing
$b$ it must cross $b$ again before crossing $c$, or else only curves
$c$ and $w$ appear on the upper envelope of the triple $a, w,
c$. Therefore the cyclic order in which $w$ intersects curves $b$ and
$c$ is separating.

In case (2) this results in a zone bounded by $w, a, b$ where $a$ is
the top curve. It is adjacent to $Z$ along the curve $a$, which
implies that it is a free zone since $w$ is the only curve
intersecting the interior of $Z$. Again we use Observation
\ref{locals} to argue that $a, w, c$ come from a non-orientable
triple. This depends on the order in which curve $w$ meets curves $a$
and $c$ after leaving $Z$. However, if $w$ intersects $c$ before $a$,
then only curves $b$ and $w$ appear on the upper envelope of the
triple $b, w, c$. 

We can therefore conclude that there always exists an free zone. This
contradicts our assumption that this was a minimal counter-example,
which completes the proof. \end{proof}   

\begin{figure}[h!]
\begin{center}
 \begin{tikzpicture}
\begin{scope}[scale = 0.5]
\fill[blue!40, fill opacity=0.4] 
(1,2.5) \ue \ds -- cycle
(1,2.5) \ld \lu;
\draw (0,3)node[left]{\ft $c$} 
(0,3) \ds \ld \ld \de \ls \ls \ls; 
\draw (0,2)node[left]{\ft $b$} 
(0,2) \us \ue \ds \ld \de \us \ue; 
\draw (0,1)node[left]{\ft $a$} 
(0,1) \ds \de \us \lu \ue \ds \de;
\draw[purple, ->, >=latex] (0,0)node[left]{\ft $w$} 
(0,0) \us \lu \lu \ue; 
\end{scope} 

\begin{scope}[scale=0.5, xshift = 12cm]
\fill[blue!30, fill opacity=0.6] 
(2,1.5) \ue \ds --cycle
(2,1.5) \ld \lu --cycle;

\draw (0,3)node[left]{\ft $c$}  
(0,3) \ds \ld \ld \de \ls \ls \ls \ls \ls;
\draw (0,2)node[left]{\ft $b$} 
(0,2) \us \ue \ls \ls \ds \ld \de \us \ue; 
\draw (0,1)node[left]{\ft $a$} 
(0,1) \ls \us \ue \ds \de \us \ue \ds \de; 
\draw[purple, ->, >=latex] 
(0,0)node[left]{\ft $w$} 
(0,0) \ls \ls \us \lu \lu \ue; 
\end{scope}
\end{tikzpicture}
\caption{\ft Consider $w$ after it leaves $Z$. {\bf Case (1), left:}  If
  $w$ crosses $b$ before $c$, then $w, b, c$ bound an empty zone
  contained in $Z$. If $w$ crosses $c$ before $b$, then $w,a,c$ is not
  in convex position. {\bf Case (2), right:} If $w$ crosses $a$ before
  $c$, then $w, a, c$ bound a free zone below $Z$. If $w$ crosses $c$
  before $a$, then $b$ intersects $w$ again after its two crossings
  with $a$, which implies that $w, a, b$ do not come from a convexly
  independent triple.} 
\label{mini-count}
\end{center}
\end{figure}
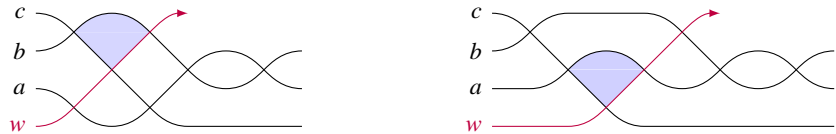

\section{The upper bound for generalized
  configurations} \label{uppers}  

Here we show that $g(n) \leq \binom{2n-5}{n-2}+1$ for all $n\geq
5$. The proof is dual to the proof given by T\'{o}th and Valtr in
\cite{totval}, but more general since it is in terms of
arbitrary wiring diagrams. 

There are two basic wiring diagrams which we refer to throughout. A
\df{$k$-cup} is the unique wiring diagram on $k$ wires in which every
wire appears on the upper envelope, and an \df{$l$-cap} is the unique
wiring diagram on $l$ wires in which every wire appears on the lower
envelope. See \fig \ref{cupfig}. 

\begin{figure}[h!]
\centering
\begin{tikzpicture}
  \begin{scope}[scale = .37]
    \begin{scope}
      \draw 
      (0,2)\ds\ld\ld\ld\de\ls\ls\ls
      (0,1)\us\ue\ds\ld\ld\de\ls\ls
      (0,0)\ls\us\lu\ue\ds\ld\de\ls
      (0,-1)\ls\ls\us\lu\lu\ue\ds\de
      (0,-2)\ls\ls\ls\us\lu\lu\lu\ue;
    \end{scope}
    \begin{scope}[xshift = 15cm, yscale = -1 ]
      \draw
      (0,2)\ds\ld\ld\ld\de\ls\ls\ls
      (0,1)\us\ue\ds\ld\ld\de\ls\ls
      (0,0)\ls\us\lu\ue\ds\ld\de\ls
      (0,-1)\ls\ls\us\lu\lu\ue\ds\de
      (0,-2)\ls\ls\ls\us\lu\lu\lu\ue;
    \end{scope}
  \end{scope}
\end{tikzpicture}
\caption{\ft A 5-cup (left) and a 5-cap (right).}
\label{cupfig}
\end{figure}
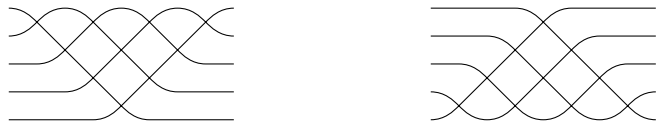

\begin{obs}\label{cupsncaps0} Let $\cal P$ be a generalized
  configuration and $W$ the wiring diagram corresponding to any
  half-period of the allowable sequence of $\cal P$. If $W$ contains a
  $k$-cup (or $k$-cap), then the wires of the $k$-cup (or $k$-cap)
  correspond to a $k$-tuple of $\cal P$ which is convexly independent.  
\end{obs}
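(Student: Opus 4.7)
The plan is to restrict attention to the $k$ wires of the cup (resp.\ cap), argue that they encode a half-period of the allowable sequence of the corresponding sub-configuration, and then read off convex independence from the fact that every such wire appears on the upper (resp.\ lower) envelope.

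More precisely, let $w_1,\dots,w_k$ be the wires of the $k$-cup in $W$ and let $\cal P' = \{p_1,\dots,p_k\} \subset \cal P$ be the corresponding points. First, I would show that the sub-wiring diagram $W'$ obtained from $W$ by retaining only the wires $w_1,\dots,w_k$ is itself a wiring diagram encoding a half-period of the allowable sequence of $\cal P'$. This is essentially immediate from the definition of the allowable sequence given in section \ref{generalizedC}: restricting to a subset of points simply deletes the other entries from each permutation, and the ordered switches that remain are precisely the crossings between the retained wires.

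Given this, the hypothesis that $W'$ is a $k$-cup says that each of the $k$ wires reaches the upper envelope of $W'$ at some point. Translating back, in the allowable sequence of $\cal P'$ every point occurs as the last entry of some permutation, which for a generalized configuration is precisely the statement that each $p_i$ admits a pseudoline through it leaving all other points of $\cal P'$ strictly on one side. Such a $p_i$ must be a vertex of the convex hull of $\cal P'$, i.e.\ $p_i \notin \conv(\cal P' \setminus \{p_i\})$, and hence $\cal P'$ is convexly independent. The $k$-cap case is symmetric, with ``upper envelope'' and ``last entry'' replaced by ``lower envelope'' and ``first entry''.

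The only step that is not pure bookkeeping is the restriction claim — verifying that erasing wires in $W$ yields a legitimate wiring diagram for the allowable sequence of the sub-configuration — and even this follows routinely from the definition of an allowable sequence as the combinatorial record of a sweeping (pseudo)line. Everything else is either immediate from the definition of the convex hull in a generalized configuration or is the tautological translation between permutation-level and wiring-diagram-level data.
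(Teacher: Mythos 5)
Your argument is correct, and since the paper states this observation without proof there is no official argument to compare against. The restriction-then-read-off route --- a wire appearing on the upper (resp.\ lower) envelope of the restricted diagram means the corresponding point is the last (resp.\ first) element of some permutation in the allowable sequence of the sub-configuration, hence admits a separating pseudoline and lies outside the convex hull of the remaining points --- is exactly the intended justification.
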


It is important to note that the converse of Observation
\ref{cupsncaps0} does not hold, that is, $\cal P$ may have $k$
convexly independent points, but no half-period of the allowable
sequence contains a $k$-cup or $k$-cap. Also notice that every wiring
diagram on 3 wires is a 3-cup or a 3-cap. In the following observation
we assume that the wires are ordered by the way in which they
intersect a vertical line at the start (left) of the half-period. 

\begin{obs}\label{cupsncaps1}
  Suppose $W$ is a $k$-cup and $W'$ is an $l$-cap where $x$ is the
  {\em bottom} wire of $W$ and the {\em top} wire of $W'$. Then $W\cup W'$
  contains a $(k+1)$-cup or an $(l+1)$-cap.  
\end{obs}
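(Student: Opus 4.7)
The plan is to dualize the classical Erd\H{o}s--Szekeres cup-cap extension argument to the wiring-diagram setting. First, I would introduce two auxiliary wires: $y$, the wire of $W$ immediately above $x$ at the start of $W$ (the second-from-bottom wire of the $k$-cup $W$), and $z$, the wire of $W'$ immediately below $x$ at the start of $W'$ (the second-from-top wire of the $l$-cap $W'$). The sub-wiring-diagram of $W\cup W'$ on the three wires $\{y,x,z\}$ must be either a 3-cup or a 3-cap, since every wiring diagram on three wires is one of these two kinds (as remarked in the paragraph preceding the observation). This yields a case split.

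In the case where $\{y,x,z\}$ is a 3-cup, I would argue that the sub-diagram of $W\cup W'$ on the $k+1$ wires of $W\cup\{z\}$ is a $(k+1)$-cup. Since a wiring diagram on $k+1$ wires is a $(k+1)$-cup precisely when every one of its wires appears on the upper envelope, it suffices to verify this for each wire in $W\cup\{z\}$. The wire $z$ sits on top of the sub-diagram at its right-hand end (the permutation is reversed there), so $z$ appears on the upper envelope. For each $w_i\in W$, the $k$-cup structure of $W$ provides an interval during which $w_i$ is on top of $W$, and I would show this interval survives in $W\cup\{z\}$ by establishing that the switch $w_iz$ occurs after the corresponding switch $w_{i-1}w_i$ of $W$. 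The case $i=k$ (so $w_i=x$) follows directly from the 3-cup condition on $\{y,x,z\}$, which places $xz$ after $yx=w_{k-1}w_k$. The remaining cases are handled by a downward induction on $i$ using the adjacency constraint on switches together with the $k$-cup structure of $W$.

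In the case where $\{y,x,z\}$ is a 3-cap, the entirely symmetric argument (interchanging upper and lower envelopes, the roles of $W$ and $W'$, and the roles of $y$ and $z$) shows that the sub-diagram of $W\cup W'$ on $\{y\}\cup W'$ is an $(l+1)$-cap.

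The main obstacle is the inductive verification that $w_iz$ occurs after $w_{i-1}w_i$ for each $i<k$. The intuition is that $z$ starts immediately below $x$ in the initial permutation of $W\cup W'$, and any early occurrence of $w_iz$ would require $w_i$ to have descended below its natural position in the $k$-cup, conflicting with the prescribed switch order of the $k$-cup $W$. Making this precise requires a careful bookkeeping of how the switches of $W$, the switches of $W'$, and the cross-switches between $W\setminus\{x\}$ and $W'\setminus\{x\}$ can legally interleave in a valid wiring diagram, using the characterization that two wires can switch only when they are adjacent in the current permutation.
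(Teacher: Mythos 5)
Your approach is correct and essentially the same as the paper's. The paper's one-sentence sketch (``consider the order in which the wire $x$ intersects the other wires of $W$ and of $W'$; if $x$ intersects every wire of $W$ before intersecting every wire of $W'$, extend $W$; otherwise extend $W'$'') \emph{is} your dichotomy in disguise: since $y=w_{k-1}$ is the last $W$-wire that $x$ crosses (by the $k$-cup structure) and $z=v_2$ is the last $W'$-wire that $x$ crosses (by the $l$-cap structure), the paper's condition is exactly the condition that the switch $yx$ precedes $xz$, i.e.\ that $\{y,x,z\}$ forms a $3$-cup.

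On the ``main obstacle'' you flag: the verification that $w_iz$ occurs after $w_{i-1}w_i$ for all $i$ is real, but you do not need the interleaving bookkeeping you describe. Simply iterate the same $3$-cup/$3$-cap observation on the consecutive triples $\{w_i, w_{i+1}, z\}$: the inductive hypothesis that $w_{i+1}z$ occurs after $w_iw_{i+1}$ rules out the $3$-cap (whose switch order $w_{i+1}z,\ w_iz,\ w_iw_{i+1}$ would put $w_{i+1}z$ first), so the triple is a $3$-cup, whose switch order $w_iw_{i+1},\ w_iz,\ w_{i+1}z$ places $w_iz$ after $w_iw_{i+1}$; and in the $k$-cup the switch $w_iw_{i+1}$ occurs after $w_{i-1}w_i$, closing the induction. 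This replaces the ad hoc adjacency argument with a single reusable step and makes the proposal complete.
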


This observation is dual to the one appearing in \cite{erd-sze1}. It
also holds if we switch the roles of the top and bottom wires. To see
why the observation holds, consider the order in which the wire $x$
intersects the other wires of $W$ and of $W'$. If $x$ intersects
every wire of $W$ before intersecting every wire of $W'$, then $W$ can
be extended to a $(k+1)$-cup. Otherwise, $W'$ can be extended to an
$(l+1)$-cap.  

\begin{obs}\label{cupsncaps2}
Every wiring diagram on $\binom{k+l-4}{k-2} + 1$ wires contains a
$k$-cup or an $l$-cap.  
\end{obs}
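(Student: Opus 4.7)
The plan is to prove Observation \ref{cupsncaps2} by induction on $k+l$, following the classical Erd\H{o}s--Szekeres cup-cap argument dualized to wiring diagrams. The engine is Pascal's identity
\[
\binom{k+l-4}{k-2} \;=\; \binom{k+l-5}{k-3} + \binom{k+l-5}{k-2},
\]
combined with Observation \ref{cupsncaps1}, which lets us glue a $(k-1)$-cup to an $(l-1)$-cap at a shared wire to produce either a $k$-cup or an $l$-cap.

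For the base case I would take $k=3$ (the case $l=3$ is symmetric). Here $\binom{l-1}{1}+1=l$, so I must show that a wiring diagram on $l$ wires with no 3-cup is itself an $l$-cap. An enumeration shows the only two wiring diagrams on three wires are the 3-cup and the 3-cap, so the hypothesis forces every triple to be a 3-cap; propagating this triple-wise constraint on the ordered switches uniquely pins down the half-period as the $l$-cap pattern.

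For the inductive step, let $W$ be a wiring diagram on $N(k,l):=\binom{k+l-4}{k-2}+1$ wires with no $k$-cup, and produce an $l$-cap. Set
\[
A = \{\,x\in W : x\text{ is the bottom wire of some }(k-1)\text{-cup in }W\,\}, \qquad B = W\setminus A.
\]
Pascal's identity gives $|A|+|B| = \binom{k+l-5}{k-3}+\binom{k+l-5}{k-2}+1$, so either $|B|\ge N(k-1,l)$ or $|A|\ge N(k,l-1)$. If $|B|\ge N(k-1,l)$, induction applied to $B$ yields an $l$-cap (done) or a $(k-1)$-cup; the latter is impossible because its bottom wire would lie in $A\cap B=\emptyset$. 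If $|A|\ge N(k,l-1)$, induction applied to $A$ yields a $k$-cup (excluded) or an $(l-1)$-cap; the top wire $x$ of this $(l-1)$-cap lies in $A$, hence serves as the bottom wire of some $(k-1)$-cup in $W$, and Observation \ref{cupsncaps1} then produces a $k$-cup (excluded) or the desired $l$-cap.

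The main obstacle is the base case: verifying rigorously that the triple-wise ``cap'' condition determines the half-period of ordered switches to be exactly the $l$-cap pattern. This is straightforward but finicky, and requires checking that the ``cap before'' relation on pairs of wires induced by the 3-cap condition extends consistently to a total order. A secondary point is to make sure that the conventions for ``bottom wire of a cup'' and ``top wire of a cap'' used when defining $A$ and when invoking Observation \ref{cupsncaps1} are compatible with the ones that arise from the inductive hypothesis applied to $A$ and $B$.
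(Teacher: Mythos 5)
Your proposal is correct and follows essentially the same route as the paper: both define $A$ as the set of bottom wires of $(k-1)$-cups, argue via Pascal's identity that $A$ or its complement is large enough to apply the inductive hypothesis, and glue a $(k-1)$-cup to an $(l-1)$-cap at a shared wire using Observation \ref{cupsncaps1}. The paper's write-up is terser (it merely references the argument of Erd\H{o}s and Szekeres and omits the base case entirely), but the underlying argument is the one you give.
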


The proof is the same as the one given by Erd\H{o}s and Szekeres
\cite{erd-sze1}. One proceeds by induction on $k+l$ and define $A$ to
be the set wires which are the bottom wire of some $(k-1)$-cup. By
induction, it follows that $|A| \geq \binom{k+l-5}{k-3}+1$, thus $A$
contains a $k$-cup or an $(l-1)$-cap. In the latter case, Observation
\ref{cupsncaps1} implies the existence of a $k$-cup or an $l$-cap. 

We now formulate a crucial Lemma which lies at the heart of the proof of
T\'{o}th and Valtr \cite{totval}. 

\begin{lemma}\label{toth-improv}
Let $\cal P$ be a generalized configuration with $|\cal P| =
\binom{2n-5}{n-2}$ and $n\geq 5$. Let $W$ be a wiring diagram corresponding to
any half-period of the allowable sequence of $\cal P$. If no subset of
$n$ points of $\cal P$ are convexly independent, then $W$ contains an
$(n-1)$-cup.   
\end{lemma}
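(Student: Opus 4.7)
The plan is to adapt the T\'oth--Valtr argument for point configurations to the dual setting of wiring diagrams, replacing slope/metric arguments by combinatorial moves built on Observations \ref{cupsncaps0}--\ref{cupsncaps2}. We proceed by induction on $n$. The cases $n \le 6$ are vacuous: as noted previously, $g(n) = 2^{n-2} + 1 < \binom{2n-5}{n-2}$ in this range, so no generalized configuration of the stated size without an $n$-element convexly independent subset even exists.

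For the inductive step with $n \ge 7$, suppose for contradiction that $W$ contains no $(n-1)$-cup. Observation \ref{cupsncaps0} also forbids $n$-caps in $W$, since either would give an $n$-element convexly independent subset of $\cal P$. Partition the wires as
\[
A := \{\, y \in W : y \text{ is the bottom wire of some } (n-2)\text{-cup of } W \,\}, \qquad B := W \setminus A.
\]
The sub-wiring diagram $W|_A$ has no $(n-1)$-cup (inherited from $W$) and no $(n-1)$-cap: the top wire of such a cap would lie in $A$ and hence bound an $(n-2)$-cup in $W$, and Observation \ref{cupsncaps1} would combine these into an $(n-1)$-cup or an $n$-cap of $W$, both forbidden. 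Applying Observation \ref{cupsncaps2} with $k = l = n-1$ then gives $|A| \le \binom{2n-6}{n-3}$. By construction $W|_B$ has no $(n-2)$-cup, and no $n$-cap by inheritance, so Observation \ref{cupsncaps2} with $(k,l) = (n-2,n)$ gives $|B| \le \binom{2n-6}{n-2}$. Summing and using Pascal's identity,
\[
|W| \;\le\; \binom{2n-6}{n-3} + \binom{2n-6}{n-2} \;=\; \binom{2n-5}{n-2},
\]
which merely matches but does not contradict the hypothesis.

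The main obstacle, and the heart of the argument, is the ``save-one'' step: showing that at least one of the two inequalities above must be strict under the stronger hypothesis of \emph{no $n$-gon} (rather than merely no $n$-cap). The plan is to invoke the inductive hypothesis on $W|_B$. Since $W|_B$ has no $(n-2)$-cup, the contrapositive of the lemma at parameter $n-1$ forces $W|_B$ to contain an $(n-1)$-element convexly independent subset as soon as $|B| \ge \binom{2n-7}{n-3}$. One then argues that this global convex-position property inside $W|_B$, combined with the constraint that no $(n-2)$-cup is present in $B$, yields an honest $(n-1)$-cap inside $W|_B$; combining this $(n-1)$-cap with an $(n-2)$-cup drawn from $A$ via Observation \ref{cupsncaps1} produces a forbidden $(n-1)$-cup or $n$-cap in $W$. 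The technically delicate point is precisely this extraction -- turning an abstract $(n-1)$-element convex-position subset into a concrete $(n-1)$-cap inside the wiring diagram -- and requires a careful analysis of how the $n-1$ wires of the convex-position subset interlace with the surrounding wires of $A$. Making this extraction rigorous is the crux of the proof.
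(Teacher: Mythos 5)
Your opening moves match the paper: you partition into $A$ (bottom wires of $(n-2)$-cups) and $B$, and you correctly derive $|A|\le\binom{2n-6}{n-3}$ and $|B|\le\binom{2n-6}{n-2}$ from the absence of $(n-1)$-cups, $n$-caps, and $(n-2)$-cups in $B$. From $|A|+|B|=\binom{2n-5}{n-2}$ these are in fact equalities, and this is the starting point of the paper's ``save-one'' step as well.

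From here, however, your route diverges and has two genuine gaps, both acknowledged as unresolved in your own write-up. First, the step ``an $(n-1)$-element convexly independent subset of $B$ yields an honest $(n-1)$-cap in $W|_B$'' is precisely the kind of deduction the paper warns against: the converse of Observation~\ref{cupsncaps0} is false, and the paper states explicitly that a convexly independent $k$-subset need not appear as a $k$-cup or $k$-cap in any fixed half-period. Ruling out $(n-2)$-cups in $B$ does not rescue this -- there are wiring diagrams whose convexly independent subsets are not caps either, so the ``extraction'' you flag as the crux is not a technicality but a real obstruction with no clear fix along these lines. Second, even if you obtained an $(n-1)$-cap entirely inside $B$, you could not feed it into Observation~\ref{cupsncaps1} together with an $(n-2)$-cup whose bottom wire is in $A$: that observation requires the cap's top wire and the cup's bottom wire to \emph{coincide}, and your cap's wires all lie in $B$ while the cup's bottom wire lies in $A$, which are disjoint. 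So the intended contradiction does not assemble.

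The paper's actual argument is not inductive and proceeds differently after the counting step. It first shows that for every $a\in A$, the set $B\cup\{a\}$ contains an $(n-2)$-cup with bottom wire $a$, and that for every $b\in B$, the set $A\cup\{b\}$ contains an $(n-1)$-cap with top wire $b$ (both via Observation~\ref{cupsncaps2} applied to $B\cup\{a\}$ and $A\cup\{b\}$ using the equalities $|A|=\binom{2n-6}{n-3}$, $|B|=\binom{2n-6}{n-2}$). It then considers all pairs $(a,b)$, $a\in A$, $b\in B$, bounding such a cup or cap, picks the pair whose crossing is leftmost, and performs a two-case analysis (cup vs.\ cap) that produces a forbidden $(n-1)$-cup, $n$-cap, or convexly independent $n$-set by direct examination of how a few distinguished wires $w,w',a',b'$ interlace. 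That leftmost-pair device is exactly what replaces the ``metric'' shortest-distance choice in T\'oth--Valtr, and it is the piece your proposal is missing.
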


\begin{remark}
  Notice that if $|\cal P| = \binom{2n-5}{n-2}+1$, then the conclusion
  of Lemma \ref{toth-improv} is guaranteed by Observation
  \ref{cupsncaps2}, thus the extra effort which goes into proving
  Lemma \ref{toth-improv} only pays off an improvement of a single point! 
\end{remark}

Before getting to the proof of Lemma \ref{toth-improv}, let us see why
this implies $g(n)\leq \binom{2n-5}{n-2}+1$. Let 
$\cal P$ be a generalized configuration with $|\cal P| =
\binom{2n-5}{n-2}+1$ and let $W$ be the wiring diagram of one of the
half-periods of its allowable sequence. We insert a new wire $x$
starting above every wire of $W$ such that it first crosses the
initial top wire $w$ of $W$ while $w$ is on the upper envelope, and
then follows wire $w$, slightly below, meeting the remaining wires of
$W$ in the same order as $w$ does. In this way $x$ crosses every wire
of $W$ precisely once. 

The wire $x$ separates the crossings of $W$ into ``left crossings''
and ``right crossings'', and the idea is to shift the left crossings
over to the right side by sending them through the line at
infinity. More precisely, consider the first crossing on the left side
of $x$ representing the ordered switch $ij$. At the right end of the
wiring diagram, these wires will be adjacent, but in the opposite
order. Delete the $ij$ crossing from the left side and add the
corresponding $ji$ crossing on the right side at the end of the wiring
diagram. The procedure can be repeated until all left crossings have
been evacuated, after which we can apply a homeomorphism of the plane
which maps the wire $x$ to a vertical line. We now delete wire $x$ and
obtain a new wiring diagram $W'$ on the same number of wires as $W$,
with the property that the top wire of $W'$ crosses every wire below
before any other crossings occur. See \fig \ref{evacuation}.  

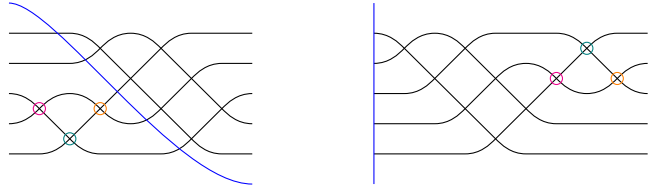
\begin{figure}[h!]
  \centering
  \begin{tikzpicture}
    \begin{scope}[scale =.4]

      \begin{scope}
        \draw (0,4) 
        \ls\ls\ds\ld\ld\ld\de\ls;
        \draw (0,3) 
        \ls\ls\us\ue\ds\ld\ld\de;
        \draw (0,2)  
        \ds\ld\de\ls\ls\us\lu\ue; 
        \draw (0,1) 
        \us\ue\ds\de\us\lu\ue\ls;
        \draw (0,0)
        \ls\us\lu\lu\lu\ue\ls\ls;
        \draw[blue] 
        (0,5) .. controls (1.5,5) and (5.5,-1) .. (8,-1);
        \draw [magenta] (1,1.5) circle [radius = .2cm];
        \draw [teal] (2,.5) circle [radius = .2cm];
        \draw [orange] (3,1.5) circle [radius = .2cm];
      \end{scope}

      \begin{scope}[xshift=12cm]
        \draw (0,4) 
        \ds\ld\ld\ld\de\ls\ls\ls\ls;
        \draw (0,3) 
        \us\ue\ds\ld\ld\de\ls\ls\ls;
        \draw (0,2)  
        \ls\us\lu\ue\ls\ls\ds\ld\de; 
        \draw (0,1) 
        \ls\ls\us\lu\ue\ds\de\us\ue;
        \draw (0,0)
        \ls\ls\ls\us\lu\lu\lu\ue\ls;
        \draw[blue] 
        (0,-1) -- (0,5);
        \draw [magenta] (6,2.5) circle [radius = .2cm];
        \draw [teal] (7,3.5) circle [radius = .2cm];
        \draw [orange] (8,2.5) circle [radius = .2cm];
      \end{scope}
    \end{scope}
  \end{tikzpicture}
  \caption{evacuation}
  \label{evacuation}
\end{figure}

It is easily verified that the above operation does not change the
orientation of any triples. That is, when we extend $W'$ to its double 
cover, we obtain the full period of the allowable sequence of a
generalized configuration $\cal P'$, and there will be a bijection
$\phi: \cal P \to \cal P'$ which preserves the cyclic order of every
triple of points. In particular, a subset of $\cal P$ is convexly
independent if and only if its image under $\phi$ is convexly
independent. (Recall the proof of Lemma \ref{realz}.)

We may therefore assume that the top wire of $W$ intersects every
other wire of $W$ before any other crossing occurs, and then consider
the sub diagram $W_0 \subset W$ obtained by deleting the top wire from
$W$. Now $W_0$ corresponds to the half-period of a subconfiguration
$\cal P_0 \subset \cal P$ consisting of $\binom{2n-5}{n-2}$ points. If
$\cal P_0$ contains $n$ points which are convexly independent, then we
are done. Otherwise, Lemma \ref{toth-improv} implies that $W_0$
contains an $(n-1)$-cup, which together with the wire we deleted from
$W$, forms an $n$-cup.   

\begin{proof}[Proof of Lemma \ref{toth-improv}] Let $\cal P$ be a
  generalized configuration on $\binom{2n-5}{n-2}$ points which does
  not contain any convexly independent subset of size $n$. Fix a
  half-period of the allowable sequence of $\cal P$ and let $W$ be its
  wiring diagram. By assumption $W$ contains no $n$-cap, and suppose
  for contradiction that $W$ contains no $(n-1)$-cup.  

  Let $A$ denote the set of wires which are the bottom wire of some
  $(n-2)$-cup and let $B$ denote the remaining wires. If $|A| >
  \binom{2n-6}{n-3}$, then Observation \ref{cupsncaps2} implies that
  $A$ contains an $(n-1)$-cap, since we've assumed that there is no
  $(n-1)$-cup. But then Observation \ref{cupsncaps1} implies that $A$
  contains an $(n-1)$-cup or an $n$-cap. Therefore $|A|\leq
  \binom{2n-6}{n-3}$. On the other hand, if $|B| > \binom{2n-6}{n-2}$,
  then $B$ contains an $(n-2)$-cup or an $n$-cap, by Observation
  \ref{cupsncaps2}, which contradicts the definition of $A$. We
  conclude that $|A| = \binom{2n-6}{n-3}$ and $|B| =
  \binom{2n-6}{n-2}$.  

  Next, we claim that for every $a\in A$, the set $B\cup \{a\}$
  contains an $(n-2)$-cup whose bottom wire is $a$. This follows from
  Observation \ref{cupsncaps2} and the definition of $A$. Similarly, we
  claim that for every $b\in B$ the set $A\cup \{b\}$ contains an
  $(n-1)$-cap whose top wire is $b$. This follows from Observation
  \ref{cupsncaps2} and by noticing that if $A$ contains an $(n-1)$-cap,
  then Observation \ref{cupsncaps1} would imply the existence of an
  $(n-1)$-cup or an $n$-cap.

  Now consider the set of pairs $(a,b)$ with $a\in A$ and $b\in B$
  such that $a$ is the bottom wire and $b$ the top wire of an
  $(n-2)$-cup or an $(n-1)$-cap, and choose the pair $(a,b)$ whose
  crossing is the leftmost among all such pairs. We now deal with two
  separate cases depending on whether the pair $(a,b)$ bound an
  $(n-2)$-cup or an $(n-1)$-cap. 

  For the first case, suppose $(a,b)$ bound an $(n-2)$-cup whose wires
  appear on the upper envelope in order $b, w, \dots, a$. By the
  argument above, there exists an $(n-1)$-cap whose wires appear on
  the lower envelope in order $a', w', \dots, b$. Note that $a\neq a'$
  or else $\cal P$ contains convexly independent subset of size
  $2n-3$, so by our assumption $b$ meets $a$ before $a'$. To see this, 
  extend to the full period and observe that every wire of the
  $(n-2)$-cup {\em and} the $(n-1)$-cap appear on the upper
  envelope. Next, observe that the starting point of $a'$ is between
  the starting points of $b$ and $w$. If not, then $a'$ meets $b$
  before $w$, which implies that the wires of the $(n-1)$-cap, $a',w',
  \dots, b$ together with $w$, correspond to a convexly independent
  subset of $\cal P$, which can be seen, again, by extending to the double
  cover. Finally, consider the order in which $w'$ meets wires $a'$
  and $w$. If $w'$ meets $a'$ before $w$, then there is an $(n-1)$-cup
  whose wires appear in order $w', a', w, \dots, a$. Otherwise, there
  is an $n$-cap whose wires appear in the order $w, a, w', \dots,
  b$. See \fig \ref{tilf}, left.  

  The second case is similar. Suppose $(a,b)$ bound an $(n-1)$-cap
  whose wires appear on the lower envelope in order $a, \dots, w,
  b$. There exists an $(n-2)$-cup whose wires appear on the upper
  envelope in order $b', \dots, w', a$. As before, we must have $b\neq
  b'$, so by our assumption $a$ meets $b$ before $b'$. Moreover, the
  starting points of $b'$ must be above the starting point of $b$, or
  else the wires of the $(n-1)$-cap $a, \dots, w, b$ together with
  $b'$ correspond to a convexly independent subset of $\cal P$ (again,
  extend to the double cover). Finally, consider the order in which
  $w'$ meets wires $b$ and $w$. If $w'$ meets $b$ before $w$, then
  there is an $(n-1)$-cup whose wires appear in the order $b, \dots,
  w', b, w$. Otherwise, there is an $n$-cap whose wires appear in
  order $a, \dots, w, b, w'$. See \fig \ref{tilf}, right. \end{proof} 

  \begin{figure}[h!]
    \centering
    \begin{tikzpicture}
      \begin{scope}[scale=.5]
        \draw(0,6) \ls\ls\ls\ds\ld\ld\ld\de;
        \draw[blue, dashed] (0,5) .. controls (2,5) and (2,3) .. (2.5,2.7); 
        \draw[blue, dashed] (0,5) .. controls (3.5,5) and (3.6,3) .. (3.7,2.6);
        \draw(0,4) \ls\ls\ds\ld\de\ls\us\lu;
        \draw(0,3) \ls\ls\us\lu\lu\ud;
        \draw(6.3,5.8)[dotted] .. controls (6.5,6) and (6.7,6) .. (6.9,5.8);
        \draw(0,2) \ls\ls\ls\us\lu\lu\lu\ue;
        \node at (-.6,2)  {\ft $a$};
        \node at (-.5,4)  {\ft $a'$};
        \node at (-.6,3)  {\ft $w$};
        \node at (-.5,5)  {\ft $w'$};
        \node at (-.6,6)  {\ft $b$};
      \end{scope}
 
      \begin{scope}[scale=.5, xshift =14cm]
        \draw(0,6) \ls\ds\ld\ld\ld\de;
        \draw(0,5) \ls\us\ue\ds--++(.5,-.5);
        \draw[dotted] (1.65,6) .. controls (1.85,6.2) and (2.05,6.2)
        .. (2.25,6);
         \draw[dotted] (0.65,2) .. controls (0.85,1.8) and (1.05,1.8)
         .. (1.25,2); 
        \draw[blue, dashed] (4.5,5) .. controls (5,4.5) and (5,3)
        .. (6,2.5) ;
        \draw[blue, dashed] (4.5,5) .. controls (5,4.5) and
        (6.5,4.5) .. (7,2.5); 
        \draw(0,4) \ls\ds\de\us\ue\ds\de;
        \draw(0,3) \ds\de\ls\ls\us\lu\ue;
        \draw(0,2) \us\lu\lu\lu\ue;
        \node at (-.6,2)  {\ft $a$};
        \node at (-.6,4)  {\ft $b$};
        \node at (-.6,3)  {\ft $w$};
        \node at (-.5,5)  {\ft $w'$};
        \node at (-.5,6)  {\ft $b'$};
      \end{scope}
    \end{tikzpicture}
    \caption{\ft First case (left): $(a,b)$ bound an $(n -2)$-cup whose wires
    appear in order $b, w, \dots, a$. Second case (right): $(a,b)$
    bound an $(n -1)$-cap whose wires appear in order $a, \dots w, b$.}
    \label{tilf}
  \end{figure}
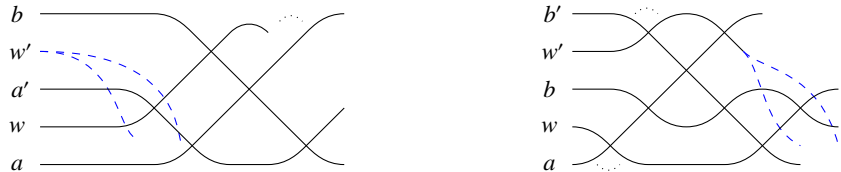

\section{Further generalizations} \label{extensions}

A \df{convex $n$-clustering} is a disjoint
union of point sets $\cal
S_1$, $\cal S_2$, $\dots, \cal S_n$ of  
equal size such that all $n$-tuples $(p_1, p_2, \dots, p_n)$ with
$p_i\in \cal S_i$ are convexly independent. The cardinality of the
$\cal S_i$ is the \df{size} of the clustering. This notion
naturally extends to arrangements of bodies as well. 

\subsection{The positive fraction version}

B\'{a}r\'{a}ny and Valtr gave the following generalization of the
Erd\H{o}s-Szekeres theorem, known as the {\em positive fraction
  Erd\H{o}s-Szekeres theorem}.   
 
\begin{unthm}[B\'{a}r\'{a}ny-Valtr \cite{baranyES}] 
  For every integer $n>3$ there exists a constant $c_n>0$ such that any
  finite set $\cal S$ in the Euclidean plane, in which every
  triple is convexly independent, contains a convex $n$-clustering of
  size $c_n |\cal S|$. 
\end{unthm}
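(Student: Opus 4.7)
The plan is to follow the now-standard two-step strategy of B\'{a}r\'{a}ny and Valtr: apply the \emph{Same-Type Lemma} to refine $\cal S$ into $f(n)$ large clusters in which every choice of one point per cluster realizes the same order type, then apply the ordinary Erd\H{o}s--Szekeres theorem to a single transversal and lift the convexity back to the clusters.

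Set $N := f(n)$. First, partition $\cal S$ into $N$ parts $\cal S_1, \dots, \cal S_N$ of sizes differing by at most one, for instance by slicing along $N-1$ generic vertical lines. Since every triple of $\cal S$ is convexly independent, $\cal S$ is in general position in the classical sense, and so is each $\cal S_i$. Apply the Same-Type Lemma of B\'{a}r\'{a}ny--Valtr to the $N$-tuple $(\cal S_1, \dots, \cal S_N)$ to extract subsets $\cal T_i \subseteq \cal S_i$ with $|\cal T_i| \geq \gamma(N)\,|\cal S_i|$, where $\gamma(N)>0$ depends only on $N$, such that every transversal $(p_1, \dots, p_N) \in \cal T_1 \times \cdots \times \cal T_N$ realizes the same order type.

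Choose any representative $r_i \in \cal T_i$. Then $R := \{r_1, \dots, r_N\}$ has $N = f(n)$ points with every triple convexly independent, so by the classical Erd\H{o}s--Szekeres theorem there exist indices $i_1 < \dots < i_n$ with $r_{i_1}, \dots, r_{i_n}$ convexly independent. The same-type conclusion propagates immediately: every transversal of $(\cal T_{i_1}, \dots, \cal T_{i_n})$ realizes the same order type as $(r_{i_1}, \dots, r_{i_n})$ and is therefore convexly independent. Setting $s := \min_k |\cal T_{i_k}| \geq \gamma(N)\,\lfloor |\cal S|/N\rfloor$ and choosing arbitrary equal-size subsets $\cal U_k \subseteq \cal T_{i_k}$ of size $s$ exhibits the required convex $n$-clustering $\cal U_1, \dots, \cal U_n$ of size $s \geq c_n|\cal S|$, with $c_n := \gamma(f(n))/(2 f(n))$.

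The only substantive ingredient is the Same-Type Lemma itself, which B\'{a}r\'{a}ny and Valtr prove by iterated ham-sandwich cuts; everything else is combinatorial bookkeeping, so there is no serious obstacle in the point case. The reason for rerunning this argument here is that essentially the same scheme lifts to non-crossing arrangements of bodies once the Same-Type Lemma is replaced by a body-analogue and $f(n)$ is replaced by $h_1(n) = g(n)$ via Theorem \ref{equivES}; combined with the bound $g(n) \leq \binom{2n-5}{n-2}+1$ this yields a quantitative improvement of the constant $c_n$, which is the direction we pursue below.
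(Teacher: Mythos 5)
The paper does not supply its own proof of this statement; the B\'{a}r\'{a}ny--Valtr theorem is quoted as a known result whose role here is only to be transferred, via Lemmas \ref{realz} and \ref{weak}, first to generalized configurations and then to non-crossing arrangements. Your reconstruction---partition $\cal S$ into $f(n)$ blocks, apply the Same-Type Lemma, apply Erd\H{o}s--Szekeres to a transversal of representatives to locate a convexly independent $n$-tuple of blocks, and propagate convex independence to all transversals by same-typeness---is exactly the original B\'{a}r\'{a}ny--Valtr argument and is correct.

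One caveat concerns your closing paragraph. Replacing $f(n)$ by the larger quantity $h_1(n)=g(n)\le\binom{2n-5}{n-2}+1$ makes the constant $c_n=\gamma(N)/(2N)$ emerging from this scheme strictly \emph{worse}, not better, since $\gamma$ shrinks and $N$ grows. The quantitative improvement the paper claims for arrangements of bodies is not obtained by re-running a body analogue of the Same-Type argument; it comes from observing that the P\'{o}r--Valtr constant $c_n\ge n\cdot 2^{-32n}$ for point sets (proved by a different method) carries over verbatim to generalized configurations and hence, by Lemmas \ref{realz} and \ref{weak}, to non-crossing arrangements, thereby beating the constants previously known in the body setting. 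So your proof of the stated theorem is fine, but the transfer mechanism you sketch at the end is not the paper's and would yield weaker bounds.
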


The current best value for $c_n$ is due to P\'{o}r and Valtr
\cite{partitionES}, and shows that $c_n \geq n\cdot 2^{-32n}$. Their
argument can be repeated verbatim to hold for generalized
configurations as well. 
By Lemmas \ref{realz} and \ref{weak} the  positive fraction
version and for generalized 
configurations is equivalent 
to the positive fraction version for non-crossing arrangements of bodies, and
therefore holds with the same bound on $c_n$.\footnote{The positive
  fraction Erd\H{o}s-Szekeres theorem was first established for
  arrangements of mutually disjoint bodies by Pach and Solymosi
  \cite{pach-soly1}, and their method was subsequently improved by P\'{o}r
  and  Valtr \cite{por-convex}. Our methods imply a
  substantial quantitative improvement, as well as relaxing the
  disjointness assumption.}

\begin{theorem} 
  For every integer $n>3$ there exists a constant $c_n>0$ such that any
  non-crossing arrangement $\cal A$, in which every
  triple is convexly independent, contains a convex $n$-clustering of
  size $c_n|\cal A|$. 
\end{theorem}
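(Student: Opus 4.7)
The plan is to reduce the positive fraction theorem for non-crossing arrangements to the analogous statement for generalized configurations, which (as noted before the theorem) follows from a verbatim adaptation of the P\'or--Valtr argument. The reduction uses the two structural tools developed in Section \ref{proof1}: the weak map from a non-crossing arrangement to an orientable one (Lemma \ref{weak}), and the realization of an orientable arrangement by a generalized configuration (Lemma \ref{converserep}).

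Concretely, let $\cal A$ be a non-crossing arrangement in which every triple is convexly independent. First I would apply Lemma \ref{weak} to obtain a weak map $\phi : \cal A \to \cal B$ with $\cal B$ orientable, and then apply Lemma \ref{converserep} to produce a generalized configuration $\cal P$ together with a bijection $\psi : \cal B \to \cal P$ preserving the cyclic ordering of every triple. Since convex independence both for orientable arrangements and for generalized configurations is determined entirely by the set of triple orientations, $\psi$ preserves convex independence in both directions, while $\phi$ by definition preserves it in the preimage direction. In particular, the composition $\psi \circ \phi : \cal A \to \cal P$ is a bijection with the property that the preimage of every convexly independent sub-arrangement is convexly independent.

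Now apply the positive fraction Erd\H{o}s--Szekeres theorem for generalized configurations to $\cal P$. This yields a convex $n$-clustering $\cal S_1, \ldots, \cal S_n$ inside $\cal P$ of total size $c_n |\cal P| = c_n |\cal A|$, with $c_n \geq n \cdot 2^{-32n}$. Set $\cal T_i := \phi^{-1}(\psi^{-1}(\cal S_i))$. Since $\phi$ and $\psi$ are bijections, the $\cal T_i$ are pairwise disjoint sub-arrangements of equal size, and their total size is $c_n|\cal A|$. For any transversal $(A_1, \ldots, A_n)$ with $A_i \in \cal T_i$, the image $(\psi(\phi(A_1)), \ldots, \psi(\phi(A_n)))$ is a transversal of the clustering in $\cal P$, hence convexly independent; pulling back first through the order-preserving bijection $\psi$ and then through the weak map $\phi$ shows that $(A_1,\ldots,A_n)$ is convexly independent in $\cal A$. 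This produces the desired convex $n$-clustering in $\cal A$.

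The main obstacle is not the reduction itself, which is essentially formal once the machinery of Section \ref{proof1} is in place, but rather the tacit claim that the P\'or--Valtr proof adapts verbatim to generalized configurations. Their argument proceeds by iterated pigeonhole together with repeated application of the Erd\H{o}s--Szekeres theorem, and both ingredients are available in the generalized setting (the bound on $g(n)$ established in Section \ref{uppers}). Because the argument never invokes any metric property of the plane, only cyclic triple orientations, the adaptation is routine and yields precisely the same constant $c_n$.
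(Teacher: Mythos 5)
Your proof is correct and follows the same route the paper takes: reduce to generalized configurations via the weak map of Lemma \ref{weak} followed by a triple-orientation-preserving bijection to a generalized configuration, apply the Pór--Valtr argument in the generalized setting, and pull the clustering back. The only difference is cosmetic: you invoke Lemma \ref{converserep} for the step from the orientable arrangement to a generalized configuration, whereas the paper cites Lemma \ref{realz}; your citation is in fact the more apt one for the direction of the reduction actually needed (the paper quotes both lemmas to assert the two-way equivalence and leaves the one-directional pull-back implicit).
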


\subsection{The partitioned version}

Answering a question of Kalai, the positive fraction version was
further generalized by P\'{o}r and Valtr \cite{partitionES} with what
is called the {\em partitioned Erd\H{o}s-Szekeres theorem}.  

\begin{unthm}[P\'{o}r-Valtr \cite{partitionES}]
  For every $n\geq 3$ there exist constants $p = p_n$ and $r = r_n$
  such that for any finite set $\cal S$ the Euclidean plane, in which
  every triple is convexly independent, 
  there is a sub-arrangement $\cal S'$ of 
  size at most $r$ such that $\cal S \setminus \cal S'$ can be partitioned
  into at most $p$ convex $n$-clusterings.
\end{unthm}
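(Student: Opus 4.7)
My plan is to iterate the positive fraction version of the Erd\H{o}s-Szekeres theorem cited above (B\'ar\'any--Valtr), combined with a Ramsey-type grouping argument to ensure that the number of output clusterings depends only on $n$.

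First, I would repeatedly apply the B\'ar\'any--Valtr positive fraction theorem to extract a convex $n$-clustering of linear size. Starting with $\mathcal{S}_0 = \mathcal{S}$, each extraction removes at least $c_n|\mathcal{S}_i|$ points from the current set $\mathcal{S}_i$, so after $k = O(\log(|\mathcal{S}|/r_n))$ steps the remainder has size at most a prescribed threshold $r_n$; this remainder is taken to be $\mathcal{S}'$. At this stage one has obtained a sequence of $O(\log|\mathcal{S}|)$ pairwise disjoint convex $n$-clusterings, but this count is not yet bounded purely in terms of $n$.

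To reduce the number of clusterings to a constant $p = p_n$, I would classify the extracted clusterings by a combinatorial ``type'', for instance the order type of a transversal consisting of one point picked from each of the $n$ parts. Since $n$-point convexly independent configurations in the plane realise only finitely many order types (a number $T(n)$ depending only on $n$), a pigeonhole step bundles the $O(\log |\mathcal{S}|)$ clusterings into at most $T(n)$ families of clusterings of the same type. Within each family, I would invoke the same-type lemma of B\'ar\'any--F\"uredi--Lov\'asz--Pach to combine the family's clusterings into a single convex $n$-clustering whose $i$-th part is the union of the $i$-th parts across the family, for $i = 1, \dots, n$.

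The main obstacle is the merging step: it is not automatic that the union of several $n$-clusterings of the same type is itself a single convex $n$-clustering, since one must verify that \emph{every} transversal (one point per merged part) is convexly independent, not only those transversals contained in one of the original clusterings. The same-type lemma provides precisely this guarantee when the clusterings arise from a compatible choice of separating directions; arranging for such compatibility will likely require discarding a bounded number of exceptional points per family, which can be absorbed into $\mathcal{S}'$ without disturbing its constant-size bound. Carrying this out for each of the $T(n)$ types then gives the required partition into at most $p = p_n$ convex $n$-clusterings, completing the proof.
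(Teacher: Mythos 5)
This statement is cited in the paper from P\'{o}r--Valtr \cite{partitionES} without an internal proof, so the comparison here is against the known argument rather than a proof in this text. Your proposal, however, has a genuine gap at the merging stage, and one you already sense.

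Iterating the positive fraction theorem gives roughly $\log_{1/(1-nc_n)}(|\mathcal{S}|/r_n)$ pairwise disjoint convex $n$-clusterings, and pigeonholing by order type puts $\Theta(\log|\mathcal{S}|)$ of them into a single type class. To finish you must merge \emph{all} the clusterings in one class into a single convex $n$-clustering after discarding a number of points bounded only in $n$. The same-type lemma cannot do this. It is a \emph{selection} statement: from $m$ sets one may pass to subsets, each retaining only a positive \emph{fraction} (not all-but-$O(1)$ points), so that all transversals have the same order type. Applied to $m=\Theta(\log|\mathcal{S}|)$ clusterings it would shrink each part by a factor depending on $m$, and the discarded points do not number $O(1)$; they cannot be absorbed into a set $\mathcal{S}'$ of size $r_n$. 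More fundamentally, two $n$-clusterings whose own transversals realise the same order type need not merge at all: take the four vertices of a square as four singleton parts, and take a slightly rotated concentric square as a second clustering of the same type; a mixed transversal (one vertex of the inner square with three of the outer) can fail convex independence. Same type of the \emph{within}-clustering transversals says nothing about \emph{cross}-clustering transversals, which is exactly what merging requires.

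The actual P\'{o}r--Valtr argument does not iterate the positive fraction theorem and then merge. It builds a single geometric partition of the point set up front, with a constant number of pieces depending only on $n$, by a hierarchical cup/cap construction; the "constant number of exceptional points" arises as the vertices of this skeleton, not as leftovers from a logarithmic-depth extraction. To repair your approach you would need a mechanism to re-cluster the entire extracted family globally in a way that controls cross-transversals, which is essentially the content of the P\'{o}r--Valtr construction itself; the iterate-and-merge scheme does not shortcut it.
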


Extending this theorem to generalized configurations can be
done in a more or less a routine way. Essentially one needs to modify
the proofs of
Claims 1 -- 3 in \cite{partitionES}. This can be done by replacing any
``distance 
arguments'' by ``continuous sweep arguments'' (see for instance
\cite{smoro, topaff}). The remaining parts of the proof of P\'{o}r and
Valtr are combinatorial, and do not need further modification. By
applying Lemmas \ref{realz} and \ref{weak} it follows that the
fractional versions for generalized configurations and for
non-crossing arrangements are equivalent. We therefore
obtain the following.\footnote{This
  theorem was announced by P\'{o}r and Valtr in \cite{partitionES} for
  the case of pairwise disjoint bodies, 
  but their proof is complicated and appears only in an unpublished
  manuscript.} 

\begin{theorem} \label{c3part}
  For every $n\geq 3$ there exist constants $p = p_n$ and $r = r_n$
  such that the following holds. For every non-crossing
  arrangement $\cal A$ in which every triple is convexly independent
  there is a sub-arrangement $\cal A'$ of 
  size at most $r$ such that $\cal A \setminus \cal A'$ can be partitioned
  into at most $p$ convex $n$-clusterings.
\end{theorem}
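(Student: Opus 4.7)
The plan is to mirror the strategy used for the positive-fraction theorem in the previous subsection: first establish the partitioned statement for generalized configurations, then transport it to non-crossing arrangements via Lemmas \ref{realz} and \ref{weak}.

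First I would prove the analogous statement for generalized configurations, namely that for every $n \geq 3$ there exist constants $p_n$ and $r_n$ such that every generalized configuration $\cal P$ in which every triple is convexly independent contains a subset $\cal P' \subseteq \cal P$ of size at most $r_n$ with $\cal P \setminus \cal P'$ partitioned into at most $p_n$ convex $n$-clusterings. To prove this I would follow the argument of P\'{o}r and Valtr \cite{partitionES} almost verbatim, replacing each distance-based comparison in their Claims 1--3 by the corresponding pseudoline sweep. In a topological affine plane there is no intrinsic metric, but every place where \cite{partitionES} singles out a ``closest'' or ``extremal'' point relative to a direction or a line can be re-interpreted as ``the first (or last) point reached by a suitable continuous pseudoline sweep'', in the spirit of \cite{smoro, topaff}. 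The partial orderings induced by these sweeps are exactly what drives the recursive pigeonhole of \cite{partitionES}; once the distance arguments are topologized, the remainder of the proof is purely combinatorial and transfers without change.

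Second, I would transfer this to non-crossing arrangements. Given a non-crossing arrangement $\cal A$ in which every triple is convexly independent, Lemma \ref{weak} yields a weak map $\phi : \cal A \to \cal B$ with $\cal B$ orientable, and Lemma \ref{converserep} yields a bijection $\psi : \cal B \to \cal P$ to a generalized configuration preserving the cyclic orientation of every triple. Applying the generalized-configuration version to $\cal P$ produces $\cal P' \subseteq \cal P$ of size at most $r_n$ together with a partition of $\cal P \setminus \cal P'$ into convex $n$-clusterings $\cal S_1^{(j)}, \dots, \cal S_n^{(j)}$ for $j = 1, \dots, p_n$. Set $\cal A' := (\psi \circ \phi)^{-1}(\cal P')$ and $\cal T_i^{(j)} := (\psi \circ \phi)^{-1}(\cal S_i^{(j)})$; since $\psi \circ \phi$ is a bijection, sizes and disjointness are preserved. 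For any transversal $(a_1, \dots, a_n)$ with $a_i \in \cal T_i^{(j)}$, its image is a convexly independent transversal in $\cal P$; since $\psi$ preserves triple orientations and convex independence in a generalized configuration depends only on these orientations \cite{OMS, goody, knuti}, the set $\{\phi(a_1), \dots, \phi(a_n)\}$ is convexly independent in $\cal B$; and since $\phi$ is a weak map, $\{a_1, \dots, a_n\}$ is convexly independent in $\cal A$. This gives the required partition of $\cal A \setminus \cal A'$.

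The main obstacle is the first step: one must verify that every quantitative argument in \cite{partitionES} really depends only on the combinatorial data of a sweep, and never on a genuine metric quantity such as a distance ratio or a triangle-inequality estimate. In principle this is bookkeeping, but the Claims 1--3 of \cite{partitionES} involve several nested extremal selections, so tracing each of them through the sweep-based translation requires some care. This is precisely why the paper describes the extension as ``more or less routine'' while leaving it unwritten, and it is the only place in the argument where any genuine new work is needed beyond invoking Lemmas \ref{realz} and \ref{weak}.
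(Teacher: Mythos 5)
Your proposal matches the paper's argument: the paper likewise proves the partitioned statement for generalized configurations by replacing the distance arguments in Claims 1--3 of P\'or--Valtr with continuous pseudoline sweeps, and then transfers it to non-crossing arrangements via Lemma \ref{weak} together with the correspondence between orientable arrangements and generalized configurations. Your explicit bookkeeping of the transfer step (using Lemma \ref{converserep} and the fact that $\phi$ is a weak map) is a sound elaboration of what the paper states only in one line.
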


\begin{remark}
It is natural to ask whether the non-crossing condition can be further
relaxed. In a subsequent paper we show that this is indeed the case,
confirming a conjecture of Pach and T\'{o}th \cite{PachToth1}.
\end{remark}

\section{Acknowledgments} We are grateful to the anonymous referee for
many useful comments which helped improve the exposition of this paper.

M.~Dobbins was supported by NRF grant 2011-0030044 funded by the
government of South Korea (SRC-GAIA) and BK21.  

A.~Holmsen was supported  by Basic Science Research Program through the
National Research Foundation of Korea funded by the Ministry of
Education, Science and Technology (NRF-2010-0021048). 

A.~Hubard was supported
by Fondation Sciences Math\'{e}matiques de Paris. A.~Hubard would like to thank
KAIST for their hospitality and support during his visit.


\end{document}